\newtheorem{thm}{Theorem}[section]
\newtheorem{lemma}[thm]{Lemma}
\newtheorem{cor}[thm]{Corollary}
\theoremstyle{definition}
\def\F {{\mathcal F}}
\def\rt {{r_{4}}}
\def\rf {{r_{5}}}
\def\ef {{\epsilon_{5}}}
\title{Size of the Largest Induced Forest in Subcubic Graphs of Girth at least Four and Five}
\author{Tom Kelly\thanks{Department of Combinatorics and Optimization, University of Waterloo, Waterloo, Ontario, Canada. Email: \texttt{t9kelly@uwaterloo.ca}}
\and
Chun-Hung Liu\thanks{Department of Mathematics, Princeton University, Princeton, New Jersey, USA. Email: \texttt{chliu@math.princeton.edu}. Partially supported by NSF under grant DMS-1664593.}
}
\date{\today}
\begin{document}
\maketitle

\begin{abstract}
In this paper, we address the maximum number of vertices of induced forests in subcubic graphs with girth at least four or five.
We provide a unified approach to prove that every 2-connected subcubic graph on $n$ vertices and $m$ edges with girth at least four or five, respectively, has an induced forest on at least $n-\frac{2}{9}m$ or $n-\frac{1}{5}m$ vertices, respectively, except for finitely many exceptional graphs.
Our results improve a result of Liu and Zhao and are tight in the sense that the bounds are attained by infinitely many 2-connected graphs.
Equivalently, we prove that such graphs admit feedback vertex sets with size at most $\frac{2}{9}m$ or $\frac{1}{5}m$, respectively.
Those exceptional graphs will be explicitly constructed, and our result can be easily modified to drop the 2-connectivity requirement.
\end{abstract}

\section{Introduction}
All graphs are simple in this paper.
We say that a graph $G$ is {\em subcubic} if every vertex of $G$ has degree at most three.
A graph is {\em cubic} if every vertex has degree three.

The largest size of an induced subgraph with certain properties in subcubic graphs has been extensively studied. 
An immediate corollary of Brooks' theorem is that every subcubic graph on $n$ vertices other than $K_4$ contains an independent set of size at least $\frac{n}{3}$ and an induced bipartite subgraph of size at least $\frac{2n}{3}$.
These easy upper bounds are attained by any subcubic graph whose vertices can be partitioned into triangles and hence tight.
However, the upper bounds on the size of those induced subgraphs can be significantly improved when triangles are excluded.
Staton \cite{s} proved that every triangle-free subcubic graph on $n$ vertices has an independent set with size least $\frac{5n}{14}$, improving the solution of Fajtlowicz \cite{f} of a conjecture of Albertson, Bollobas and Tucker \cite{abt}.
It was further strengthened by Zhu \cite{z} that all but finitely many triangle-free subcubic graphs on $n$ vertices have an induced bipartite subgraph on at least $\frac{5n}{7}$ vertices.
In addition, it was proved by the second author and Yu \cite{ly} that every subcubic graph other than $K_{3,3}$ can be partitioned into four independent sets such that each subgraph induced by any two parts is a disjoint union of paths.
It implies that every subcubic graph on $n$ vertices contains an induced subgraph that is a linear forest on at least $\frac{n}{2}$ vertices.

In this paper, we investigate the largest size of an induced subgraph that is a forest, which is a natural class between induced bipartite subgraphs and induced linear forests and is closely related to feedback vertex sets, which have been extensively studied.
An {\em induced forest} in a graph $G$ is an induced subgraph that is a forest, and we denote the largest size of an induced forest in $G$ by $a(G)$; a subset $S$ of vertices is a {\em feedback vertex set} of $G$ if $S$ intersects all cycles in $G$, and we denote the minimum size of a feedback vertex set of $G$ by $\phi(G)$.
It is easy to see that $a(G)+\phi(G)=\lvert V(G) \rvert$.
We remark that deciding the size of a minimum feedback vertex set of an input graph $G$ is NP-hard in general \cite{k} but is polynomial time solvable when $G$ is subcubic \cite{ukg}.

We will address the largest size of induced forests in subcubic graphs in this paper.
Study of this parameter on subcubic graphs seems crucial.
Problems on subcubic graphs are already nontrivial and have received much attention, such as Staton and Zhu's results mentioned earlier. 
In addition, for some conjectures about the size of the largest induced forests, such as a conjecture proposed by Kowalik, Lu\v{z}ar and \v{S}krekovski \cite{KLS10} on planar graphs with girth at least five, the vertex-set of every known graph attaining the conjectured bound can be partitioned into induced subcubic subgraphs.
In fact, the machinery developed in this paper will be used in one of our follow-up papers \cite{kl} about induced forests in planar graphs.

The study of the largest induced forest in subcubic graphs can be dated to a result of Bondy, Hopkins and Staton \cite{BHS87}.

\begin{thm}\label{bhs}\cite{BHS87}
If $G$ is a connected subcubic graph on $n>4$ vertices, then $a(G)\geq\frac{5n}{8} - \frac{1}{4}$.  
If $G$ is a connected triangle-free subcubic graph, then $a(G)\geq \frac{2n}{3} - \frac{1}{3}$.
\end{thm}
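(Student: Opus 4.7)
The plan is to prove both inequalities by induction on $n$, verifying small base cases (connected subcubic graphs of small order) by direct inspection. The fractional slack terms $-\frac{1}{4}$ and $-\frac{1}{3}$ exist precisely to absorb boundary effects in small graphs and to keep the inductive arithmetic tight. For the inductive step I would search for a \emph{reducible configuration}: a small vertex subset $S \subseteq V(G)$ whose removal yields a connected subcubic graph $G'$ (triangle-free, in the second part) on enough vertices such that any induced forest of $G'$ extends to an induced forest of $G$ using at least $\frac{5\lvert S\rvert}{8}$ (respectively $\frac{2\lvert S\rvert}{3}$) of the vertices of $S$. Equivalently, we need to commit at most a $\frac{3}{8}$- (respectively $\frac{1}{3}$-) fraction of $S$ to the feedback vertex set.

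For the first statement, I would distinguish cases according to minimum degree and the presence of triangles. A vertex of degree at most $1$ is trivially reducible with ratio $1$. A vertex of degree $2$ admits a local reduction realizing the $5/8$ ratio after one handles any connectivity obstruction caused by its deletion. If $G$ contains a triangle, then the triangle together with some of its external neighbors forms a reducible set, because at most one vertex of the triangle need lie in the feedback vertex set. The remaining case is that $G$ is cubic and triangle-free, which falls under the hypothesis of the second statement; the resulting bound $\frac{2n}{3}-\frac{1}{3}$ then implies $\frac{5n}{8}-\frac{1}{4}$ for every $n \geq 2$, so the first statement follows from the second in this case.

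For the triangle-free statement, the parallel analysis is by minimum degree and by girth. Vertices of degree at most $2$ and $4$-cycles taken together with their external neighbors each yield clean reducible configurations attaining or beating the $2/3$ ratio. The main obstacle is the cubic girth-$\geq 5$ case, where local structure is sparse: for a vertex $v$ of degree $3$ with neighbors $x,y,z$, the six second neighbors are all distinct by girth-$5$, but they can still be pairwise adjacent along $5$-cycles through $v$, so one cannot simply keep $N(v)$ and delete $v$. Instead I would work with a reducible $S$ drawn from the closed second neighborhood of $v$ (up to $10$ vertices) and argue that regardless of how the external edges fall, one can find a subset of $S$ hitting all cycles induced on $S$ of size at most $\lvert S \rvert/3$, while the remainder extends any induced forest of $G - S$.

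The principal technical difficulty throughout is to ensure that the reduced graph $G'$ actually satisfies the inductive hypothesis: connected, subcubic (and triangle-free in the second part) with enough vertices for the inductive bound to apply. This is where the slack terms $-\frac{1}{4}$ and $-\frac{1}{3}$ really pay off: they give exactly the room needed to deal with pathological small residues, with deletions that cause one-vertex or two-vertex components to split off, and with the handful of exceptional small configurations that must be checked by hand rather than reduced further.
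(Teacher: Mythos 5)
The paper does not prove Theorem \ref{bhs}; it is quoted from Bondy, Hopkins and Staton \cite{BHS87}, so there is no in-paper argument to compare against, and your proposal has to be judged on its own. It follows the natural inductive, reducible-configuration strategy, but as written it has a genuine gap: the reductions are announced rather than exhibited, and the one concrete claim you make in the hard case is false as stated. You assert that for a set $S$ drawn from the closed second neighborhood of a vertex one can retain all but at most $\lvert S\rvert/3$ of $S$ so that the retained vertices hit all cycles \emph{induced on} $S$ and that ``the remainder extends any induced forest of $G-S$.'' Hitting the cycles inside $S$ is not enough, and extension of an \emph{arbitrary} induced forest of $G-S$ is in general impossible: the retained vertices of $S$ can close cycles passing through both $S$ and $V(G)\setminus S$, so either the forest in $G-S$ or the choice of retained vertices must be adapted to the boundary edges. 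That interaction is exactly the content of the difficult case (cubic, triangle-free, girth at least five), and it is left unproved.

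A second unaddressed point is disconnection. The additive constants $-\tfrac14$ and $-\tfrac13$ are per component, so if $G-S$ breaks into $k$ components the inductive hypothesis only gives $a(G-S)\ge \tfrac{2}{3}\bigl(n-\lvert S\rvert\bigr)-\tfrac{k}{3}$ (and similarly for the first bound), forcing your extension step to recover an extra $\tfrac{k-1}{3}$ beyond the nominal $\tfrac{2}{3}\lvert S\rvert$; saying the slack ``gives exactly the room needed'' does not establish this, and it is precisely where the degree-two, triangle and $4$-cycle reductions can fail without a more careful choice of $S$. The first statement's induction must also avoid landing on $K_4$ or other graphs on at most four vertices, where the hypothesis is unavailable. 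Your arithmetic observation that $\tfrac{2n}{3}-\tfrac13\ge \tfrac{5n}{8}-\tfrac14$ for $n\ge 2$ is correct and legitimately reduces the cubic triangle-free case of the first bound to the second, but the remaining case analysis would still have to be carried out explicitly for this to constitute a proof.
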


The first bound in Theorem \ref{bhs} is attained by every cubic graph that can be obtained from a disjoint union of copies of $K_3$ or $K_4^+$ by adding edges so that each added edge is a cut-edge.
In this paper, $K_4^+$ denotes the graph obtained from $K_4$ by subdividing an edge. 
And the bound of the second statement in Theorem \ref{bhs} is attained by $Q_3$ and $V_8$, where $Q_3$ is the 3-dimensional cube and $V_8$ is the non-planar cubic graph on eight vertices with no $K_5$-minor.
See Figure \ref{girth four F_{2,2}}.

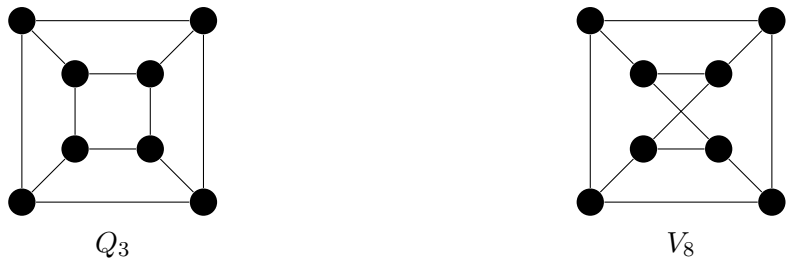
\begin{figure}
\begin{minipage}[b]{.5\linewidth}
	\centering
	\begin{tikzpicture}
		\tikzstyle{every node}=[fill=black, circle]
		\node (a) at (0, 0) {}; \node (1) at ($(a) + (135:1)$) {};
		\node (b) at (1, 0) {}; \node (2) at ($(b) + (45:1)$) {};
		\node (c) at (1, -1) {}; \node (3) at ($(c) + (-45:1)$) {};
		\node (d) at (0, -1) {}; \node (4) at ($(d) + (-135:1)$) {};
		
		\draw (1) -- (2) -- (3) -- (4) -- (1);
		\draw (a) -- (b) -- (c) -- (d) -- (a);
		
		\draw (1) -- (a);
		\draw (2) -- (b);
		\draw (3) -- (c);
		\draw (4) -- (d);
	\end{tikzpicture}\\$Q_3$
\end{minipage}%
\begin{minipage}[b]{.5\linewidth}
	\centering
	\begin{tikzpicture}
		\tikzstyle{every node}=[fill=black, circle]
		\node (a) at (0, 0) {}; \node (1) at ($(a) + (135:1)$) {};
		\node (b) at (1, 0) {}; \node (2) at ($(b) + (45:1)$) {};
		\node (c) at (1, -1) {}; \node (3) at ($(c) + (-45:1)$) {};
		\node (d) at (0, -1) {}; \node (4) at ($(d) + (-135:1)$) {};
		
		\draw (1) -- (2) -- (3) -- (4) -- (1);
		\draw (a) -- (b) -- (d) -- (c) -- (a);
		
		\draw (1) -- (a);
		\draw (2) -- (b);
		\draw (3) -- (c);
		\draw (4) -- (d);
	\end{tikzpicture}\\$V_8$
\end{minipage}
\caption{Two cubic graphs with girth four.}%{Girth four graphs in $\F_{2,2}$}
\label{girth four F_{2,2}}
\end{figure}
  
Zheng and Lu \cite{ZL90} improved the second statement of Theorem \ref{bhs} for cubic graphs by showing that the additive constant can be removed for all triangle-free cubic graphs except for $Q_3$ and $V_8$.
Their result is tight for infinitely many 2-connected graphs, by taking disjoint copies of $L$ and adding edges.
In this paper, $L$ denotes the graph obtained from $K_4$ by subdividing a perfect matching once.
Their result was further improved by Liu and Zhao \cite{LZ96} as follows.

\begin{thm}\label{lz}\cite{LZ96}
If $G$ is a connected cubic graph on $n$ vertices of girth at least $g$, not $K_4,Q_3,$ or $V_8$, and if $G$ cannot be obtained from a disjoint union of copies of $K_3$ or $K_4^+$ by adding edges so that each added edge is a cut-edge, then $a(G) \geq \frac{3g-4}{4g-4}n-\frac{g-3}{2g-2}$. 
\end{thm}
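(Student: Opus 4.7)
The plan is to prove Theorem \ref{lz} by strong induction on $n = |V(G)|$, with base cases verified directly for all cubic graphs of girth at least $g$ on at most a (girth-dependent) constant number of vertices. For the inductive step I would take a minimum counterexample $G$ and seek a \emph{reducible configuration}: a small subgraph $H$ of $G$ such that the graph $G'$ obtained from $G - V(H)$ by suppressing degree-two vertices (or reconnecting dangling edges) is again cubic, has girth at least $g$, is strictly smaller, and is not among the listed exceptions. If such $H$ exists then by induction $G'$ has an induced forest meeting the required bound, and I would extend it to an induced forest of $G$ containing at least $\lceil \frac{3g-4}{4g-4}|V(H)| \rceil$ vertices of $V(H)$, so as to preserve the inductive ratio.

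Before searching for configurations I would derive preliminary structural properties of the counterexample: it must be $2$-edge-connected, and essentially $3$-edge-connected modulo the known exceptions, since any bridge or $2$-edge-cut produces a decomposition whose inductive bounds combine to the required ratio. The hypothesis that $G$ is not obtained from $K_3$ or $K_4^+$ blocks joined by cut-edges, together with the exclusion of $K_4$, $Q_3$, and $V_8$, ensures that the additive constant $\frac{g-3}{2g-2}$ is not exceeded during these connectivity reductions. I would then exploit the girth hypothesis through local tree structure: for any vertex $v$, the ball of radius $\lfloor (g-1)/2 \rfloor$ about $v$ in $G$ is a complete ternary subtree, giving enough room to perform a local case analysis on how the outer boundary reattaches to $G$ and thereby locate a candidate $H$ of the required type.

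The principal obstacle is twofold. First, after deleting $V(H)$ the boundary vertices of $G - V(H)$ typically become of degree two, and restoring cubicity by suppressing paths or inserting edges can create short cycles, violating the girth bound; $H$ must therefore be chosen so that its external neighbours are pairwise sufficiently far apart in $G - V(H)$, which severely restricts the admissible shapes of $H$. Second, the reduced graph $G'$ may accidentally coincide with one of the forbidden exceptions $K_4$, $Q_3$, $V_8$, or a $K_3/K_4^+$-chain; in each such sub-case the induced forest of $G$ has to be built directly from the exceptional structure and controlled tightly enough that the additive term $\frac{g-3}{2g-2}$ is not violated. Overcoming these two sub-case obstacles, rather than the inductive extension itself, is where I expect the bulk of the technical work to lie.
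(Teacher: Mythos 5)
You have written a plan, not a proof, and the two places where you yourself locate ``the bulk of the technical work'' are exactly the places where nothing is supplied: no concrete reducible configuration $H$ is ever exhibited, and the central claim that an induced forest of the reduced graph $G'$ can always be extended by at least $\lceil \frac{3g-4}{4g-4}|V(H)| \rceil$ vertices of $V(H)$ is asserted rather than argued. That extension step is the entire content of a theorem of this type: after suppressing degree-two vertices a forest vertex of $G'$ corresponds to a path of $G$, and deciding which suppressed vertices to reinstate while keeping the subgraph induced and acyclic is where the case analysis lives. Likewise, your bookkeeping for the additive constant $\frac{g-3}{2g-2}$ under bridge and $2$-edge-cut decompositions is only gestured at; since the constant does not scale with the number of pieces, combining two sides that each use up their additive allowance can overshoot, and this needs an explicit accounting device, not a remark that the excluded graphs ``ensure'' it.

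For comparison, note that the paper never proves Theorem \ref{lz} at all -- it is quoted from Liu and Zhao -- and its own stronger results (Theorems \ref{weak main} and \ref{real main thm}, for $g\in\{4,5\}$) are obtained by a framework designed precisely to evade the two obstacles you name but do not overcome. Instead of insisting that the reduced graph stay cubic with girth at least $g$, the paper measures the bound by edges, $\phi(G)\le t_g|E(G)|+r_g(G)$, works throughout with subcubic graphs, and relaxes the girth hypothesis to ``no two disjoint cycles of length less than $g$,'' so that local surgeries such as deleting a path $abc$ and adding $a_1a_2$ and $c_1c_2$ (its operation $\circ$ in reverse) remain inside the hypothesis class even when they create short cycles. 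All exceptional outcomes of such reductions are absorbed into the block-wise error term $r_g$, built from the explicit families $\F_{i,j}$ and $\F^g_{i,j,k}$, with negative contributions for $K_2$ blocks handling the cut-edge decompositions, and the last surviving case for $g=5$ is eliminated by the characterization of the dodecahedron. Unless you either carry out the Liu--Zhao-style configuration analysis in full for general $g$, or import some analogous relaxation-plus-error-term mechanism, your outline has a genuine gap and does not establish the statement.
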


Theorem \ref{lz} is tight for $g \in \{3,4\}$, but we will show that it is not tight for $g=5$.

It is not hard to see that if $G$ is a graph such that no cycle is a component, then there exists an induced forest of $G$ containing all vertices of degree less than three.
So it is reasonable to expect that the size of the largest induced forest is larger when the graph is not cubic.
Alon, Mubayi and Thomas \cite{AMT01} proved the following. 

\begin{thm}\label{subcubic girth 3}\cite{AMT01}
If $G$ is a subcubic graph on $n$ vertices and $m$ edges and $G \neq K_4$, then $a(G) \geq n-\frac{m}{4}-\frac{c}{4}$, where $c$ is the number of components of $G$ that are cubic graphs and can be obtained from a disjoint union of copies of $K_3$ or $K_4^+$ by adding edges so that each added edge is a cut-edge.
\end{thm}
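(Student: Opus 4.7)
Proof proposal. The inequality is equivalent to $\phi(G) \le (m+c)/4$, and I plan to prove it by strong induction on $|V(G)| + |E(G)|$, with base cases consisting of $K_4$, the exceptional cubic graphs (where the bound is tight and $c \ge 1$), and a handful of explicit small subcubic graphs. Since $\phi$, $m$, and $c$ are all additive across connected components, one may assume $G$ is connected.

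The easier reductions remove low-degree vertices. A vertex $v$ of degree at most $1$ lies on no cycle, so $\phi(G-v) = \phi(G)$; its component already contains a non-cubic vertex, so $c(G-v) = c(G)$, and the induction closes the case. For a vertex $v$ of degree $2$ with distinct non-adjacent neighbors $u, w$, I would suppress $v$ (delete $v$ and add the edge $uw$) to produce a subcubic graph $G'$ on $n-1$ vertices and $m-1$ edges whose cycles correspond bijectively to those of $G$. A feedback vertex set of $G'$ lifts to one of $G$ of the same size, and since $G$ itself is not cubic (so $c(G) = 0$), any unit increase in $c$ caused by the suppression is absorbed by the one-edge drop in the budget.

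The substantive part of the argument handles triangles. If a degree-$2$ vertex $v$ has adjacent neighbors $u, w$ so that $uvw$ is a triangle, I would place $u$ into the feedback set: in $G - u$ the vertex $v$ has degree $1$, and in $G - \{u, v\}$ the vertex $w$ has degree $1$, so iterated deletion of degree-$1$ vertices gives $\phi(G) \le 1 + \phi(G - \{u,v,w\})$. Because $G$ is connected, every component of $G - \{u,v,w\}$ contains a vertex that was adjacent to one of $u, v, w$ in $G$ and hence now has degree at most $2$, so $c(G - \{u,v,w\}) = 0$; the induction then yields $\phi(G) \le 1 + (m-5)/4 = (m-1)/4 < m/4$, which suffices. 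After these reductions $G$ is cubic, and I would argue either $G$ is exceptional (the bound is then tight and can be checked block by block, each $K_3$ or $K_4^+$ contributing the correct amount) or $G$ contains a reducible structure (a short cycle to contract, for example) whose removal reduces to a smaller case, with the triangle-free cubic subcase appealing to the second clause of Theorem~\ref{bhs} and $Q_3$, $V_8$ verified directly.

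The chief obstacle is keeping the invariant $c$ controlled through the reductions: at each step one must check that no bad cubic component is spuriously created, which is precisely what forces the removal of three vertices at once in the triangle subcase rather than one, and forces care in the analysis of the degree-$2$ suppression. The cubic subcase is also delicate, since Theorem~\ref{bhs} alone leaves an additive slack of $1/4$ that must be absorbed by the $c/4$ term \emph{exactly} when $G$ is exceptional; the strategy there is to show that every non-exceptional cubic graph admits a strict reducible substructure so that the induction closes without appealing to this slack.
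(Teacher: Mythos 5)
First, a point of comparison that matters for this review: the paper does not prove Theorem~\ref{subcubic girth 3} at all --- it is quoted from Alon, Mubayi and Thomas \cite{AMT01} as background --- so there is no proof in the paper to measure your proposal against; I can only assess the proposal on its own merits, and it has genuine gaps.

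The central gap is the cubic case, which you defer rather than prove. After your degree-$\leq 1$, degree-$2$ and triangle reductions, what remains to be shown is that every connected cubic graph that is not one of the exceptional cut-edge assemblies satisfies $\phi(G)\leq m/4=3n/8$, i.e.\ $a(G)\geq 5n/8$. Theorem~\ref{bhs} gives only $a(G)\geq 5n/8-\frac14$, and integrality does not always absorb the missing quarter (it fails exactly when $5n/8$ has fractional part $\frac14$); the triangle-free cubic subcase does follow from the second clause of Theorem~\ref{bhs} plus integrality, but cubic graphs \emph{containing} triangles are precisely where the exceptional family lives, and your plan there is only the assertion that ``every non-exceptional cubic graph admits a strict reducible substructure.'' That assertion is essentially the content of the theorem: contracting or deleting a short cycle in a cubic graph creates degree-two vertices and can create exceptional components, and no concrete reduction or case analysis is offered to show the induction closes without the $\frac14$ slack. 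This is a missing idea, not a detail to be routinely filled in.

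The second problem is the bookkeeping of the error term $c$, including at the base of the induction. Your triangle step deduces $\phi(G)\leq 1+(m-5)/4$, which presupposes that both neighbours $u,w$ of $v$ in the triangle have degree three (otherwise only four edges are deleted) and that $G-\{u,v,w\}$ contributes nothing to the error term; both assumptions fail exactly on the tight examples, namely graphs assembled from copies of $K_3$ and $K_4^+$ by cut-edges that are \emph{not} cubic. For those graphs the statement with $c$ as literally defined here (only cubic components count) is in fact false --- $a(K_3)=2<3-\frac34$, $a(K_4^+)=3<5-\frac74$, and likewise $K_4$ plus an isolated vertex --- which reflects a mis-transcription of \cite{AMT01}, where the error term counts every component obtainable from copies of $K_3$ and $K_4^+$ by adding cut-edges, cubic or not. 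Your induction implicitly applies the claimed bound to exactly such graphs (e.g.\ when $G-\{u,v,w\}$ is a triangle, or when suppressing a degree-two vertex of $K_4^+$ yields $K_4$), so as written it is circular on a false base case. A correct argument must work with the larger family of ``difficult'' components and verify, in every reduction, that a difficult component can arise only when $G$ itself is difficult --- which is the delicate bookkeeping your sketch acknowledges but does not carry out.
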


Note that $m \leq \frac{3n}{2}$ for subcubic graph, so Theorem \ref{subcubic girth 3} implies the case $g=3$ in Theorem \ref{lz} and hence implies Theorem \ref{bhs}, and it also shows that the bound in the case $g=3$ of Theorem \ref{lz} is only attained by cubic graphs.

We remark that the structure of subcubic and cubic graphs with fixed girth are significantly different.
For example, there exists no planar cubic graph with girth at least six, but planar subcubic graphs can have arbitrary large girth.
So Theorem \ref{lz} does not imply Theorem \ref{subcubic girth 3}.

In this paper, we will prove the following lower bounds for the size of largest induced forests in subcubic graphs of girth at least four or five, respectively, in terms of the number of vertices and edges.

\begin{thm} \label{weak main}
Let $G$ be a connected subcubic graph on $n$ vertices and $m$ edges.
	\begin{enumerate}
		\item There exists a finite family ${\mathcal S}$ of subcubic but non-cubic graphs such that if $G$ is triangle-free and $G \not \in \{Q_3,V_8\}$, then $a(G) \geq n-\frac{2}{9}m$, unless $a(G)=n-\frac{2}{9}m-\frac{1}{9}t$ for some $t \in \{1,2\}$ and $G$ is obtained from a disjoint union of $2-t$ copies of graphs in ${\mathcal S}$ and copies of $L$ by adding edges such that each added edge is a cut-edge.
		\item There exist two graphs $R_1,R_2$ and a finite family ${\mathcal S'}$ of subcubic graphs such that if $G$ has girth at least five and $G \not \in \{R_1,R_2\}$, then $a(G) \geq n-\frac{1}{5}m$, unless $a(G)=n-\frac{1}{5}m-\frac{1}{5}$ and $G$ is obtained from a disjoint union of copies of graphs in ${\mathcal S'}$ by adding edges such that each added edge is a cut-edge.
	\end{enumerate}
\end{thm}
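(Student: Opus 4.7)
The plan is to prove both parts simultaneously by induction on $|V(G)|+|E(G)|$ via a minimum-counterexample argument, working with the equivalent formulation in terms of the feedback vertex set parameter $\phi(G)$: we need $\phi(G)\leq \frac{2}{9}m$ for girth at least four and $\phi(G)\leq \frac{1}{5}m$ for girth at least five, with the exceptional constructions contributing prescribed deficits $\frac{1}{9}t$ or $\frac{1}{5}$. The first step is to reduce to $2$-connected graphs: if $G$ has a cut vertex, the bound is essentially additive across blocks, so combining feedback vertex sets of blocks through induction works. This decomposition is precisely why the exceptional graphs take the form of copies of small exceptional ``blocks'' joined by cut-edges, matching the statement.

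For a $2$-connected minimum counterexample, I would develop a family of local reductions. Each identifies a configuration (a vertex of degree at most two, a short cycle with a specific neighborhood, or a small subgraph with a controlled boundary), passes to a smaller graph $G'$ by deletion or contraction, applies the inductive hypothesis to $G'$, and lifts the resulting induced forest back to $G$. The central design constraint is that the drop in $\frac{2}{9}m$ (respectively $\frac{1}{5}m$) must pay for every vertex excluded from the lifted forest. Typical reductions to try are: deleting a vertex of degree at most one, suppressing a degree-two vertex whose neighbors are non-adjacent, and, for the girth-$4$ case, a $4$-cycle reduction in which a $4$-cycle of cubic vertices is removed and the four outside neighbors are paired according to their adjacency structure; an analogous $5$-cycle reduction handles girth $\geq 5$. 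The ``unified approach'' mentioned in the abstract presumably means that these reductions are stated once in a parametric form and specialized afterwards.

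After all reductions are exhausted, the surviving $2$-connected graphs are highly restricted cubic girth-$g$ graphs with no short-cycle configuration amenable to reduction. In the girth-$4$ case, I expect this to force $G$ (as a block) to be $L$, matching the extremal construction; the family $\mathcal{S}$ is then extracted from a short enumeration of subcubic non-cubic blocks whose irreducibility can be verified by hand, while Theorem~\ref{bhs} (together with its known cubic refinements) takes care of the remaining graphs via the exclusion of $Q_3$ and $V_8$. In the girth-$5$ case the analogous analysis produces a finite list of irreducible graphs, from which $R_1$, $R_2$, and the family $\mathcal{S}'$ are isolated by direct enumeration.

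The principal obstacle is the tightness of the reductions: because the bounds $\frac{2}{9}m$ and $\frac{1}{5}m$ are sharp and are attained by the named extremal graphs, each reduction must preserve the inductive inequality exactly, with no wasted slack. Consequently the reductions must be calibrated so as to fail precisely on the exceptional graphs, which forces a delicate structural case analysis of girth-$4$ and girth-$5$ cubic configurations and a careful enumeration of the irreducible exceptions. The girth-$5$ case should be the more technically demanding, both because the bound is sharper and because the list of irreducible graphs is expected to be larger; identifying $R_1$ and $R_2$ and ruling out every other cubic girth-$5$ survivor of the reductions is where most of the work will lie.
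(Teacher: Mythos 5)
The central gap is that your inductive class is not closed under your own reductions. Suppressing a degree-two vertex, contracting an edge, or deleting a path $abc$ and adding the edges $a_1a_2$, $c_1c_2$ (your $4$-/$5$-cycle style reduction) can create triangles or $4$-cycles, so the smaller graph $G'$ need not have girth at least four or five, and the induction hypothesis as you state it cannot be applied to it. The paper resolves exactly this by proving the stronger Theorem \ref{real main thm}: $\phi(G)\le t_g\lvert E(G)\rvert+r_g(G)$ for every subcubic graph with \emph{no two disjoint cycles of length less than $g$}, where $r_g$ is an error function summed over blocks that is nonzero precisely on explicit exceptional families $\F_{i,j}$ and $\F^g_{i,j,k}$ generated by two operations (subdivision and $\circ$). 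This strengthening is also what makes your ``essentially additive across blocks'' step and the tight liftings actually work: since the bounds have no slack, the reductions need the exceptional graphs to admit feedback vertex sets with extra properties (containing a prescribed vertex, or beating the bound by $1$ after deleting an edge; Lemmas \ref{edge property F} and \ref{vertex property F}), and the deficits of exceptional blocks must be tracked through $\epsilon_g$. Your sketch acknowledges the tightness problem but supplies no mechanism of this kind, and several of the proposed reductions lose exactly the amount you cannot afford unless the smaller graph happens to be exceptional, which is the case the error function is designed to detect.

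The second gap is the endgame. You assert that after the reductions only finitely many irreducible cubic graphs survive and that they can be enumerated by hand, but you give no argument for finiteness, and this is the crux, especially for girth five. In the paper a minimal counterexample is shown to be cubic, $3$-connected, of girth five (even when $g=4$, the girth-four case is eliminated in Lemma \ref{g=5}), and to have the property that for every vertex $v$ and every pair of neighbors of $v$ there are two disjoint $5$-cycles in $G-v$ through them (Lemma \ref{lemma:vertexadjacentto5-cycles}); Theorem \ref{characterization dodecahedron} then forces $G$ to be the dodecahedron, which satisfies the bound ($\phi=6\le\frac{1}{5}\cdot 30$), a contradiction. Without this characterization, or some substitute argument, nothing in your outline excludes an infinite family of ``irreducible'' girth-five cubic graphs, so the proposal does not close. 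The broad strategy (minimum counterexample, reduction to $2$-connectivity, local reductions) does match the paper, but these two missing ingredients are where the actual proof lives.
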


The graphs $R_1,R_2$ and families ${\mathcal S},{\mathcal S'}$ will be explicitly described in later sections. 
See Theorem \ref{explicit weak main}.

Theorem \ref{weak main} is tight in the sense that those bounds are attained by infinitely many 2-connected graphs.
Every 2-connected graph obtained from a disjoint union of copies of $L$ (or $R$, respectively) by adding edges between different copies has girth four (or five, respectively) and has no induced forest on more than $n-\frac{2}{9}m$ vertices (or $n-\frac{1}{5}m$ vertices, respectively), where in this paper $R$ denotes the graph obtained from the Petersen graph by deleting an edge.

We remark that as pointed out by Michael Gentner (private communication) after a version of this paper was submitted, the bound $n-\frac{2}{9}m$ for the triangle-free case of Theorem \ref{weak main} can be obtained by a result of Gentner and Rautenbach \cite{gr} when $G$ does not contain $L$ as a subgraph.

Since $m=\frac{3}{2}n_3+n_2+\frac{1}{2}n_1$, where $n_i$ is the number of vertices of degree $i$ in $G$ for each $i \in \{0,1,2,3\}$, $n-\frac{2}{9}m=\frac{2}{3}n_3+\frac{7}{9}n_2+\frac{8}{9}n_1+n_0$ and $n-\frac{1}{5}m=\frac{7}{10}n_3+\frac{4}{5}n_2+\frac{9}{10}n_1+n_0$.
%Since $m \leq \frac{3n-k}{2}$ for subcubic graphs, where $k$ is the number of vertices of degree at most two, an immediate corollary of Theorem \ref{weak main} generalizes the case $g=4$ of Theorem \ref{lz} to subcubic graphs and improves the case $g=5$ of Theorem \ref{lz}.
So an immediate corollary of Theorem \ref{weak main} generalizes the case $g=4$ of Theorem \ref{lz} to subcubic graphs and improves the case $g=5$ of Theorem \ref{lz}.

\begin{cor} \label{weak cor}
Let $G$ be a connected subcubic graph on $n$ vertices.
	\begin{enumerate}
		\item If $G$ is triangle-free and $G \not \in \{Q_3,V_8\}$, then $a(G) \geq \frac{2}{3}n$. 
		\item There exist two graphs $R_1,R_2$ and a finite family ${\mathcal S'}$ of subcubic graphs such that if $G$ has girth at least five and $G \not \in \{R_1,R_2\}$, then $a(G) \geq \frac{7}{10}n$, unless $a(G) = \frac{7}{10}n-\frac{1}{5}$ and $G$ is obtained from a disjoint union of copies of graphs in ${\mathcal S'}$ by adding edges such that each added edge is a cut-edge.
	\end{enumerate}
\end{cor}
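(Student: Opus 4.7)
The plan is to deduce both statements directly from Theorem \ref{weak main} via the handshake identity $2m = 3n_3 + 2n_2 + n_1$, which yields
\[
n - \tfrac{2}{9}m = \tfrac{2}{3}n + \tfrac{1}{9}D(G) \quad \text{and} \quad n - \tfrac{1}{5}m = \tfrac{7}{10}n + \tfrac{1}{10}D(G),
\]
where $D(G) := n_2 + 2n_1 + 3n_0 = 3n - 2m$ is a non-negative integer for every subcubic graph $G$, vanishing exactly when $G$ is cubic. These are precisely the coefficient decompositions displayed immediately above the corollary, and they make explicit how much the edge bound of Theorem \ref{weak main} exceeds the vertex bound claimed here.

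For Part 1 I apply Theorem \ref{weak main}(1). If the non-exceptional conclusion $a(G) \geq n - \frac{2}{9}m$ holds, then $a(G) \geq \frac{2}{3}n + \frac{1}{9}D(G) \geq \frac{2}{3}n$ immediately. Otherwise $a(G) = n - \frac{2}{9}m - \frac{1}{9}t$ with $t \in \{1,2\}$, and $G$ is assembled from $2-t$ copies of graphs in $\mathcal{S}$ and some number $k$ of copies of $L$, joined by exactly $k + (2-t) - 1$ cut-edges. I compute $D$ additively across this block decomposition: each copy of $L$ contributes $D(L) = 3 \cdot 6 - 2 \cdot 8 = 2$, each member of $\mathcal{S}$ contributes at least $1$ (since $\mathcal{S}$ consists of non-cubic graphs), and each cut-edge contributes $-2$ (it adds an edge without adding a vertex). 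Summing yields $D(G) \geq 2k + (2-t) - 2(k + (2-t) - 1) = t$, so $a(G) = \frac{2}{3}n + \frac{1}{9}(D(G) - t) \geq \frac{2}{3}n$.

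For Part 2 I apply Theorem \ref{weak main}(2). The non-exceptional case gives $a(G) \geq n - \frac{1}{5}m = \frac{7}{10}n + \frac{1}{10}D(G) \geq \frac{7}{10}n$. In the exceptional case $a(G) = n - \frac{1}{5}m - \frac{1}{5} = \frac{7}{10}n + \frac{1}{10}(D(G) - 2)$ and $G$ is assembled from copies of graphs in $\mathcal{S}'$ by cut-edges. If $G$ is cubic, then $D(G) = 0$ and $a(G) = \frac{7}{10}n - \frac{1}{5}$, exactly the exception asserted in the corollary. If $G$ is non-cubic, then I plan to argue $D(G) \geq 2$ by the same block-decomposition analysis: since a cubic block admits no cut-edge endpoint (such attachment would violate subcubicity), any multi-block decomposition of a connected $G$ consists entirely of non-cubic blocks, yielding enough slack in the sum $\sum D(S_i) - 2(\text{cut-edges})$ to conclude $D(G) \geq 2$, hence $a(G) \geq \frac{7}{10}n$.

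The main obstacle is this last verification. If $D(G) = 1$ were achievable among the exceptional graphs, then $a(G) = \frac{7n-1}{10}$ would lie strictly between $\frac{7}{10}n - \frac{1}{5}$ and $\frac{7}{10}n$, contradicting the corollary. Ruling this out cannot be done from Theorem \ref{weak main} alone and requires the explicit description of $\mathcal{S}'$ deferred to Theorem \ref{explicit weak main}. Once that description is in hand, the task reduces to checking that every non-cubic $S \in \mathcal{S}'$ satisfies $D(S) \geq 2$, so that cut-edge attachments (which must occur at vertices of block-degree at most $2$) cannot produce an aggregate graph with exactly one degree-$2$ vertex and every other vertex of degree $3$.
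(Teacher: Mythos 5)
Your Part 1 argument is correct and is essentially the paper's own route: the paper treats the corollary as immediate from Theorem \ref{weak main} together with the degree identity displayed just before its statement, and your bookkeeping with $D(G)=3n-2m$ (namely $D(L)=2$, $D(S)\ge 1$ for the non-cubic members of $\mathcal S$, a contribution of $-2$ per added cut-edge, and the number of added edges being one less than the number of pieces) is exactly the computation needed to see that the exceptional graphs of Theorem \ref{weak main}(1) still satisfy $a(G)\ge\frac{2}{3}n$.

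For Part 2 you have located the crux precisely, but the verification you defer to---that every non-cubic member of $\mathcal S'$ satisfies $D\ge 2$---is false, so the plan cannot be completed as written. By Theorem \ref{explicit weak main}(2b) (and indeed forced by Theorem \ref{weak main} itself, since the graph below is $2$-connected and attains equality), $\mathcal S'$ must contain the girth-five members of $\F_{4,3}$, and every member of $\F_{4,3}$ has exactly $i-j=1$ vertex of degree two, i.e.\ $D=1$. Concretely, let $G^*$ be obtained from $R_1$ by subdividing one edge, so that $G^*\in\F_{4,3}$ is $2$-connected, has girth five, $G^*\notin\{R_1,R_2\}$, $n=13$ and $m=19$. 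Any three vertices of $G^*$ are incident with at most nine edges, so deleting them leaves ten vertices and at least ten edges, which cannot induce a forest; hence $\phi(G^*)\ge 4$, while Theorem \ref{real main thm} gives $\phi(G^*)\le\frac{19}{5}+r_5(G^*)=\frac{19}{5}+\frac{1}{5}=4$. Thus $a(G^*)=9=\frac{7}{10}n-\frac{1}{10}$, strictly between $\frac{7}{10}n-\frac{1}{5}$ and $\frac{7}{10}n$, so the $D=1$ case is realizable among the exceptional graphs. This is more than a gap in your write-up: statement 2 of the corollary, read literally with the equality $a(G)=\frac{7}{10}n-\frac{1}{5}$ in its ``unless'' clause, is not derivable from Theorem \ref{weak main} and is contradicted by $G^*$; the clause would have to be weakened, e.g.\ to $a(G)\ge\frac{7}{10}n-\frac{1}{5}$, which is exactly what your slack computation $D(G)\ge 0$ delivers. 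The paper itself gives only the one-line derivation via the degree identity and never addresses this case, so your identification of the obstacle is on target even though your proposed resolution fails.
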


In fact, we will prove Theorem \ref{real main thm}, which is a stronger version of Theorem \ref{weak main} (and hence of Corollary \ref{weak cor}), in which we allow cycles with length less than four or five as long as they are not disjoint.
However, there are more exceptional graphs than in Theorem \ref{weak main} that should be considered if we allow cycles of length less than four or five.
In order to obtain a clean and simple statement, we will introduce an error function that takes non-zero values only when the graphs are exceptional.
In addition, introducing the error function also provides us a way to give a unified proof for the case of girth four and five.
But the formal definition of this error function is involved, so we postpone its formal definition and the statement of Theorem \ref{real main thm} until Section \ref{sec:error terms}.

The paper is organized as follows.
In Section \ref{sec:basic operation}, we will describe two basic operations that will be used for constructing the families ${\mathcal S},{\mathcal S'}$ mentioned in Theorem \ref{weak main} and show that the graphs that can be constructed by these two operations have a large induced forest with certain strong properties.
These strong properties are important in the proof of Theorem \ref{real main thm}.
In Section \ref{sec:special families}, we will investigate the structure of the graphs that are constructed by the two basic operations mentioned earlier.
We will state Theorem \ref{real main thm} and define the error function in Section \ref{sec:error terms}.
We will also prove that Theorem \ref{real main thm} holds for graphs constructed by the two basic operations in a stronger sense in Section \ref{sec:error terms}, which will be used in Section \ref{proof section} and whose proof provides the intuition of the definition of the error functions.
In Section \ref{sec:precise value}, we will compute the values of the error function and show how to derive Theorem \ref{weak main} from Theorem \ref{real main thm}.
The understanding of explicit values of the error function is another crucial step for proving Theorem \ref{real main thm}.
In Section \ref{sec:dodecahedron}, we will provide a sufficient and necessary condition for a cubic graph being the dodecahedron. 
It is easy to see that the dodecahedron satisfies Theorem \ref{real main thm}.
The strategy for proving Theorem \ref{real main thm} is to obtain a contradiction by showing that every minimum counterexample of Theorem \ref{real main thm} must satisfy the sufficient and necessary condition and hence is the dodecahedron.
This characterization for the dodecahedron might be of independent interest.
Finally, we prove Theorem \ref{real main thm} in Section \ref{proof section}.

We remark that since $a(G)+\phi(G)=\lvert V(G) \rvert$ for all graphs $G$, the bound mentioned in Theorems \ref{weak main} and \ref{real main thm} can be more simply stated if we consider feedback vertex sets.
So in the rest of the paper, we will investigate the minimum size of feedback vertex sets instead of the largest size of induced forests.

To complete this section, we define some notations and notions that will be frequently used in this paper.
If $X$ is a subset of $V(G)$ of a graph $G$, then $G[X]$ denotes the subgraph of $G$ induced by $X$, and $G-X$ denotes the subgraph induced by $V(G)-X$.
When $X$ consists of one vertex, say $v$, then we also write $G-X$ as $G-v$.
If $x,y$ is a pair of non-adjacent vertices of $G$, then $G+xy$ is the graph obtained from $G$ by adding the edge $xy$.
A vertex $v$ in a graph $G$ is a {\em cut-vertex} if $G- v$ has more connected components than $G$.  
Similarly, an edge $e$ in a graph $G$ is a {\em cut-edge} if $G- e$ has more connected components than $G$.
A {\em block} of a graph is a maximal connected subgraph without a cut-vertex.  We say that a block is {\em nontrivial} if it is not isomorphic to $K_1$ or $K_2$.  
Note that in any subcubic graph, every pair of nontrivial blocks is disjoint.
An {\em end-block} of a graph $G$ is a block containing at most one cut-vertex of $G$.  
Observe that every graph that is not 2-connected contains at least two end-blocks.

\section{Basic operations} \label{sec:basic operation}
The special families of graphs mentioned in Theorem \ref{weak main} will be constructed by two basic operations.
This section addresses the relationship between the size of feedback vertex sets in the graph and in the graph obtained by these operations.
These results will be used in follow-up sections.

The first operation is subdividing an edge once.

\begin{lemma}\label{lemma:subdivideedgestrong}
Let $H$ be a graph, $k\in \{0,1\}$, and let $G$ be the graph obtained from $H$ by subdividing an edge.  
If there exists $p\in\mathbb R$ such that $\phi(H- W)\leq p$ for every $W\subseteq E(H)$ with $|W| = k$, then $\phi(G- W')\leq p$ for every $W'\subseteq E(G)$ with $|W'| = k$.
\end{lemma}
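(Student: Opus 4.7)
The plan is to view the subdivision as replacing an edge $e = uv$ of $H$ by a path $u\text{-}x\text{-}v$ in $G$, where $x$ is a new degree-$2$ vertex, and then to show that a feedback vertex set of a suitable edge-deleted subgraph of $H$ can be re-used as a feedback vertex set of the corresponding edge-deleted subgraph of $G$. The two facts that drive everything are: (i) subdividing an edge neither creates nor destroys cycles (up to contracting the subdivision vertex), so a feedback vertex set of $H$ is still a feedback vertex set of $G$; and (ii) a pendant vertex lies in no cycle, so attaching or removing one does not change $\phi$.

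For the case $k = 0$, I would simply take a minimum feedback vertex set $S$ of $H$ and verify that $S \subseteq V(H) \subseteq V(G)$ hits every cycle of $G$: any cycle of $G$ either avoids $x$ and is a cycle of $H$, or passes through $x$ and corresponds to a cycle of $H$ using $e$. Either way $S$ meets it, so $\phi(G) \leq \phi(H) \leq p$.

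For the case $k = 1$, let $e' \in E(G)$ and split into two subcases. If $e' \in E(H)$ (i.e.\ $e'$ is not incident to $x$), then $G - e'$ is obtained from $H - e'$ by subdividing the edge $uv$, which is still present; so applying the $k=0$ argument to $H - e'$ (whose feedback number is at most $p$ by the hypothesis applied to $W = \{e'\}$) gives $\phi(G - e') \leq p$. If instead $e' \in \{ux, xv\}$, say $e' = ux$, then in $G - e'$ the vertex $x$ becomes a pendant (or an isolated vertex if $xv$ is also absent, but here it is not), and deleting or ignoring $x$ shows $\phi(G - e') = \phi(H - uv)$, which is at most $p$ by the hypothesis applied to $W = \{uv\}$. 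The case $e' = xv$ is symmetric.

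I do not expect any real obstacle: the whole argument is a bookkeeping exercise about how subdividing interacts with cycles and with edge deletion. The only place one must be slightly careful is in the $e' = ux$ subcase, where it is essential that the hypothesis allows $W$ to be an arbitrary single edge of $H$ (not just the subdivided one), so that the choice $W = \{uv\}$ is legitimate; this is precisely why the lemma is stated with a uniform bound over all $W$ of size $k$ rather than just for $H$ itself.
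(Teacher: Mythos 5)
Your proof is correct and follows essentially the same route as the paper: split according to whether the deleted edge set consists of original edges of $H$ or of the two new subdivision edges, and in each case reuse a minimum feedback vertex set of $H-W'$ or of $H-e$ (the subdivided edge) as a feedback vertex set of $G-W'$. The only cosmetic difference is that you phrase the new-edge case via the subdivision vertex becoming pendant and state an equality $\phi(G-e')=\phi(H-uv)$ where the paper only needs the inequality.
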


\begin{proof}
Suppose $G$ is obtained from $H$ by subdividing an edge $e$.  Say $v$ is the new vertex, and $e_1$ and $e_2$ are the new edges.  
If $W'\subseteq E(H)-\{e\}$, then every minimum feedback vertex set of $H- W'$ is a feedback vertex set of $G-W'$ with size $\phi(H-W')$.
If $W' \subseteq \{e_1,e_2\}$, then every minimum feedback vertex of $H- e$ is a feedback vertex set of $G-W'$ with size $\phi(H-e)$.
This proves the lemma.
\end{proof}

Now we define the second operation.
Let $G$ be a graph.
For $e_1,e_2 \in E(G)$ (not necessarily distinct) and $a \in V(G)$ with degree two, we define $G \circ (e_1,e_2,a)$ to be the graph obtained from $G$ by subdividing $e_1$ and $e_2$ once, respectively, and adding a new vertex adjacent to $a$ and the two vertices obtained from subdividing $e_1$ and $e_2$.
Note that when $e_1=e_2$, it is subdivided twice.

\begin{lemma}\label{lemma:op1edgestrong}
Let $H$ be a graph, $k\in\{0,1\}$, $a\in V(H), e_1,e_2\in E(H)$, and $G = H\circ (e_1,e_2,a)$.  
If there exists $p\in\mathbb R$ such that $\phi(H- W) \leq p$ for every $W\subseteq E(H)$ with $|W| = k$, then $\phi(G- W')\leq p + 1$ for every $W'\subseteq E(G)$ with $|W'| = k$.
\end{lemma}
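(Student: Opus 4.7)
The plan is to mimic the proof of Lemma~\ref{lemma:subdivideedgestrong}: for every valid $W' \subseteq E(G)$ with $|W'|=k$, I will produce a feedback vertex set of $G-W'$ of the form $S\cup\{u\}$, where $S$ is a minimum feedback vertex set of $H-W$ for a carefully chosen $W\subseteq E(H)$ with $|W|=k$, and $u$ is one of the three new vertices in $V(G)\setminus V(H)$. Write $v_1,v_2$ for the subdivision vertices of $e_1,e_2$, and $b$ for the new vertex adjacent to $a,v_1,v_2$.

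The guiding observation is that every cycle of $G$ that is not already contained in a subdivision of $H$ must pass through $b$ and use two of the three edges incident to $b$. Hence, when $u=b$, the graph $G-W'-(S\cup\{b\})$ is essentially $(H-W-S)$ with $e_1$ and $e_2$ subdivided, which is a forest whenever $H-W-S$ is. This immediately handles $k=0$ (taking $W=\emptyset$) and, for $k=1$, the subcases where $W'\subseteq E(H)\setminus\{e_1,e_2\}$ (taking $W=W'$) and where $W'$ consists of a single edge of $G$ arising from subdividing some $e_i$ and not incident to $b$ (taking $W=\{e_i\}$); in the latter case, $v_i$ survives only as a leaf-like fragment attached to what remains of $e_i$, preserving the forest property.

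The remaining subcases are those in which $W'$ is a single edge incident to $b$. If $W'=\{bv_i\}$, I would take $W=\{e_{3-i}\}$ and $u=v_{3-i}$: after the deletions, the other subdivision vertex $v_i$ continues to subdivide $e_i$ in the forest $H-e_{3-i}-S$, and $b$ hangs as a pendant from $a$, so the result is a forest. If $W'=\{ab\}$, I would take $W=\{e_1\}$ and $u=v_1$: then $v_2$ subdivides $e_2$ in the forest $H-e_1-S$ and $b$ hangs as a pendant from $v_2$, again a forest.

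The case analysis is essentially exhaustive, and the only minor subtlety is the subcase $e_1=e_2$, in which $v_1$ and $v_2$ become adjacent in $G$ via a new edge $v_1v_2$. The possibility $W'=\{v_1v_2\}$ falls under the subdivision case above (with $W=\{e_1\}$ and $u=b$), and all other subcases proceed identically. I expect the main obstacle to be purely bookkeeping---enumerating the possibilities for $W'$ and verifying in each that the resulting small graph is a forest---rather than any conceptual difficulty.
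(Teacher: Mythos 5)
Your proposal is correct and follows essentially the same strategy as the paper's proof: in every case one adds a single new vertex (the branch vertex $b$ or one of the subdivision vertices) to a minimum feedback vertex set of $H-W$ for a suitable $W$ with $|W|=k$, and your case analysis matches the paper's up to the symmetric choice of which $e_i$ to delete. Your explicit treatment of the $e_1=e_2$ edge $v_1v_2$ is a small point the paper glosses over, but the argument is the same.
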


\begin{proof}
Let $v_1,v_2$ be the new vertices obtained by subdividing edges $e_1,e_2$, respectively, and let $v$ be the vertex in $V(G) - (V(H) \cup \{v_1,v_2\})$.
If $W'\subseteq E(H)$, then adding $v$ into any minimum feedback vertex set of $H -  W'$ is a feedback vertex set of $G-W'$, as desired.
So to prove this lemma, it suffices to prove the case when $W'\not\subseteq E(H)$.  
In particular, $|W'| = 1$.

Let $e$ be the edge in $W'$.  
Suppose $e$ is incident with $v$.  
By symmetry, we may assume without loss of generality that $e$ is not incident to $v_2$.
Then adding $v_2$ into any minimum feedback vertex set of $H-e_2$ is a feedback vertex set of $G-W'$ of size $\phi(H-e_2)+1$, as desired.  
Therefore we may assume $e$ is not incident with $v$.  
By symmetry, we may assume without loss of generality that $e$ is incident with $v_2$ and not $v_1$.
Then adding $v$ into any minimum feedback vertex set of $H-e_2$ is a feedback vertex set of $G-W'$ of size $\phi(H-e_2)+1$, as desired.
\end{proof}

\begin{lemma}\label{lemma:edgestrongimpliesvertexstrong}
Let $G$ be a graph.  If $e\in E(G)$, then for every $v\in V(G)$ incident with $e$, $\phi(G-v)\leq \phi(G-e)$.
\end{lemma}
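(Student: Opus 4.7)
The plan is to start from a minimum feedback vertex set $S$ of $G-e$, so that $|S|=\phi(G-e)$, and then produce from it a feedback vertex set of $G-v$ of size at most $|S|$. The natural candidate is $S':=S\setminus\{v\}$, which is automatically a subset of $V(G-v)$.

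To verify that $S'$ hits every cycle of $G-v$, I would argue as follows. Let $C$ be any cycle of $G-v$. Since $v\notin V(C)$ and $e$ is incident with $v$, the edge $e$ does not belong to $C$, so $C$ is also a cycle of $G-e$. Because $S$ is a feedback vertex set of $G-e$, we have $V(C)\cap S\neq \emptyset$; and because $v\notin V(C)$, in fact $V(C)\cap(S\setminus\{v\})\neq\emptyset$, i.e., $V(C)\cap S'\neq\emptyset$. Hence $S'$ is a feedback vertex set of $G-v$, giving $\phi(G-v)\le |S'|\le |S|=\phi(G-e)$, as required.

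There is essentially no obstacle here: the key observation is merely that deleting an endpoint of $e$ kills every cycle through $e$, so the sets of cycles of $G-v$ and of $(G-e)-v$ coincide, and any feedback vertex set of $G-e$ remains one after discarding $v$. I would write this argument in a single short paragraph, splitting into the two trivial cases $v\in S$ and $v\notin S$ only if it aids readability (in the former case we strictly save one vertex, and in the latter $S$ itself already works).
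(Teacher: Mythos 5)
Your proof is correct and is essentially the paper's argument: take a minimum feedback vertex set $S$ of $G-e$ and observe it (after discarding $v$, which you handle slightly more carefully than the paper does) hits every cycle of $G-v$, since such cycles avoid $v$ and hence avoid $e$.
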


\begin{proof}
Let $v\in V(G)$ be incident with $e$.  Let $S$ be a feedback vertex set of $G-e$ of size $\phi(G-e)$.  Then $S$ is a feedback vertex set of $G-v$, so $\phi(G-v)\leq \phi(G- e)$, as desired.
\end{proof}

\section{Special families of graphs} \label{sec:special families}
In this section we introduce the special families of graphs in order to formally state Theorem \ref{real main thm}. 

Let $C_1$ be the multigraph on one vertex with one loop.
Define $\F_{1,0}=\{C_1\}$ and define $\F_{i,j}=\emptyset$ if $i\leq0, j<0,$ or $j > i$.
For $0\leq j \leq i$, we define $\F_{i,j}$ to be the set of subcubic multigraphs that can be either obtained from a multigraph in $\F_{i-1,j}$ by subdividing an edge once or obtained from a graph in $\F_{i,j-1}$ by taking operation $\circ$.

Note that every graph in $\F_{i,j}$ has minimum degree at least two for every $i,j$.
Since the operation $\circ$ decreases the number of vertices of degree two, for every $i,j$, $\F_{i,j}$ has exactly $i-j$ vertices of degree two.
Hence a graph in $\F_{i,j}$ is cubic if and only if $i = j$.

{\em Suppressing} a vertex of degree two of a graph means to contract an edge incident with it.
The following lemma simplifies the process of generating the family $\F_{i,j}$.

\begin{lemma} \label{commuting operations}
Let $i,j$ be integers with $i \geq 1$ and $0 \leq j \leq i$.
If $H \in \F_{i,j}$ has at least two vertices and $v$ is a vertex of $H$ of degree two, then $i+j \geq 2$, $i-1 \geq j$, and the multigraph obtained from $H$ by suppressing $v$ belongs to $\F_{i-1,j}$.

In particular, if $i-1 \geq j$, then every multigraph in $\F_{i,j}$ can be obtained from a multigraph in $\F_{i-1,j}$ by subdividing an edge.
\end{lemma}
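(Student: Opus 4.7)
The plan is to prove all three conclusions simultaneously by induction on $|V(H)|$.

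The two numerical inequalities are quick. By the paragraph preceding the lemma, every member of $\F_{i,j}$ has exactly $i-j$ vertices of degree two, so the existence of $v$ forces $i-j \geq 1$, i.e.\ $i-1 \geq j$. For $i+j \geq 2$: the only single-vertex multigraph appearing in any $\F_{i',j'}$ is $C_1 \in \F_{1,0}$, so the hypothesis $|V(H)| \geq 2$ rules out $(i,j) = (1,0)$, and combined with $i \geq 1$ and $0 \leq j \leq i$ this forces $i + j \geq 2$.

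The substantive claim is that $H^{\ast} := \text{suppress}_v(H)$ lies in $\F_{i-1,j}$. Fix a construction of $H$: either $H$ arises from some $H' \in \F_{i-1,j}$ by subdividing an edge $e$ (creating new vertex $w$), or from some $H'' \in \F_{i,j-1}$ by the operation $H'' \circ (e_1,e_2,a)$ (creating new vertices $v_1, v_2, u$, where $u$ is adjacent to $a,v_1,v_2$). The base case $|V(H)| = 2$ forces $(i,j) = (2,0)$ and $H$ to be the double-edge multigraph in $\F_{2,0}$, and suppressing either vertex returns $C_1 \in \F_{1,0}$.

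In the subdivision case, if $v = w$ then $H^{\ast} = H' \in \F_{i-1,j}$ directly. Otherwise $v \in V(H')$ has the same two incident edges in $H'$ as in $H$ up to relabeling, so $v$ has degree two in $H'$ and induction yields $\text{suppress}_v(H') \in \F_{i-2,j}$; a direct edge-set comparison (with care when $v$ is an endpoint of $e$, in which case suppressing $v$ merges the two halves of $e$ into a single edge $e'$) shows $H^{\ast}$ is a subdivision of $\text{suppress}_v(H')$, hence lies in $\F_{i-1,j}$. In the $\circ$ case, the four vertices $v_1, v_2, u, a$ all have degree three in $H$, so $v \in V(H'') \setminus \{a\}$ and $v$ still has degree two in $H''$. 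If $|V(H'')| = 1$ then $H'' = C_1$, $(i,j) = (1,1)$, and $H = K_4$ is cubic, contradicting the existence of $v$; otherwise induction gives $\text{suppress}_v(H'') \in \F_{i-1,j-1}$, and the same style of edge-set check shows $H^{\ast} = \text{suppress}_v(H'') \circ (e_1', e_2', a)$, where each $e_k'$ equals $e_k$ itself or the edge produced by suppressing $v$ (if $v$ was an endpoint of $e_k$). Either way $H^{\ast} \in \F_{i-1,j}$.

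The ``in particular'' clause is then immediate: whenever $i-1 \geq j$, any $H \in \F_{i,j}$ with at least two vertices has $i-j \geq 1$ vertices of degree two, and the first part exhibits $H$ as the subdivision of $\text{suppress}_v(H) \in \F_{i-1,j}$ at the edge produced by suppressing $v$. The main obstacle is nothing conceptual but rather the bookkeeping in the commutation arguments: one must track exactly what happens to $e$ (or to $e_1, e_2$) when $v$ is incident to them, including the degenerate possibilities of parallel edges or loops that can arise in the low-vertex multigraphs at the bottom of the construction.
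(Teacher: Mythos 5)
Your proof is correct and follows essentially the same route as the paper: induct (you on $\lvert V(H)\rvert$, the paper on $i+j$, which is equivalent), fix the last construction step of $H$, and commute the suppression of $v$ with that step, using that the new vertices created by subdivision or $\circ$ either coincide with $v$ trivially or have degree three. Your extra bookkeeping (the case $v=w$, the case $H''=C_1$, and the merged edge when $v$ is an endpoint of the subdivided edge) only makes explicit what the paper's proof leaves implicit.
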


\begin{proof}
Note that $i>j$, or else $H$ is cubic, and $i+j \geq 2$, or else $H$ is the loop.
We shall prove this lemma by induction on $i+j$.
When $j=0$, it is clear that $H$ is the cycle of length $i$.
Since $H$ has at least two vertices, $i \geq 2$, so suppressing $v$ leads to a cycle of length $i-1$, which belongs to $\F_{i-1,j}$.
This proves the base case.

Now assume $j \geq 1$.
We first assume that $H$ is obtained from a multigraph $H_1 \in \F_{i-1,j}$ by subdividing an edge $e$.
By the induction hypothesis, suppressing $v$ in $H_1$ leads to a multigraph $H_2$ in $\F_{i-2,j}$.
Then the graph obtained from $H$ by suppressing $v$ is the graph obtained from $H_2$ by subdividing the edge $e$ and hence belongs to $\F_{i-1,j}$, as desired.

So we may assume that $H$ is obtained from a multigraph $H_3 \in \F_{i,j-1}$ by taking the operation $\circ$.
By the induction hypothesis, suppressing $v$ in $H_3$ leads to a multigraph $H_4$ in $\F_{i-1,j-1}$.
Then the graph obtained from $H$ by suppressing $v$ is the graph obtained from $H_4$ by taking the operation $\circ$.
\end{proof}

Bearing Lemma \ref{commuting operations} in mind, the following lemma can be easily verified.

\begin{lemma} \label{basic F}
Let $i,j$ be integers with $i \geq 1$ and $0\leq j \leq i$.
The following holds.
	\begin{enumerate}
		\item $\F_{i,0}$ consists of the cycle of length $i$.
		\item All members of $\F_{i,j}$ are simple, unless $i \in \{1, 2\}$ and $j=0$.
		\item $\F_{1,1}=\{K_4\}$.
		\item $\F_{2,1}=\{K_4^+\}$.
		\item $\lvert \F_{3,1} \rvert = 3$; every graph in $\F_{3,1}$ can be obtained from $K_4^+$ by subdividing an edge; $L$ is the only graph in $\F_{3,1}$ with girth at least four.  If $H\in\F_{3,1}$ and $e\in E(H)$, then $H-e$ has girth at most four.
		\item\label{F_{2,2}} Every graph in $\F_{2,2}$ contains disjoint cycles of length less than five; every graph in $\F_{2,2}-\{Q_3,V_8\}$ contains a triangle disjoint from a cycle of length at most four.		
		\item Every graph in $\F_{3,2} \cup \F_{4,1}$ has girth at most four.
		\item If $G\in\F_{i,j}$, then $|V(G)| = i+3j$ and $|E(G)| = i + 5j$.
	\end{enumerate}
\end{lemma}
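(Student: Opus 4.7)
The plan is to proceed through the eight parts roughly in order of difficulty, using Lemma~\ref{commuting operations} to eliminate the ``$\circ$-route'' whenever $i-1\geq j$ so that only subdivisions need to be analyzed, and conversely noting that for $\F_{2,2}$ the hypothesis $\F_{1,2}=\emptyset$ forces every member to arise as $K_4^+\circ(e_1,e_2,a)$.

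Parts (8), (1), (2), (3), (4) fall out of induction on $i+j$ together with direct computation in the small cases. For (8), subdivision adds one vertex and one edge while both subcases of $\circ$ (whether $e_1=e_2$ or not) add three vertices and five edges. For (1), $C_1$ is the cycle of length one and subdividing a cycle of length $i-1$ yields one of length $i$. For (2), only $\F_{1,0}$ and $\F_{2,0}$ are non-simple; past these, subdivision of a simple graph is simple, and the three vertices $v_1,v_2,v$ introduced by $\circ$ attach to pairwise distinct already-present vertices, so no loops or parallel edges are created. For (3), the only way to build $\F_{1,1}$ is $C_1\circ(\ell,\ell,x)$ with $\ell$ the loop at $x$: subdividing $\ell$ twice produces a triangle on three new vertices, and joining $v$ to all three yields $K_4$. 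For (4), $K_4$ is edge-transitive so every subdivision of it is $K_4^+$, and every $\circ$-construction on $\F_{2,0}$ produces a five-vertex, seven-edge graph whose unique degree-two vertex has non-adjacent neighbours and whose deletion leaves $K_4$ minus an edge, which characterises $K_4^+$.

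For parts (5) and (7), Lemma~\ref{commuting operations} reduces everything to subdivision. Under its automorphism group $K_4^+$ has three edge orbits (the far edge $ab$, the four near edges $ac,ad,bc,bd$, and the two subdivision edges $cw,wd$), so $|\F_{3,1}|\leq 3$, and the three candidates are pairwise non-isomorphic because they contain $0$, $1$, and $2$ triangles respectively. Subdividing $ab$ gives $L$; the other two subdivisions preserve at least one of the triangles $abc,abd$, so $L$ is the unique girth-$\geq 4$ member of $\F_{3,1}$. For the final sentence of (5), I would verify that $L$ has four distinct $4$-cycles with each edge lying on exactly two of them (so $L-e$ still contains a $4$-cycle), while the other two graphs retain either a triangle or a $4$-cycle from a broken triangle after any single edge deletion. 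Part (7) then follows: any subdivision of a graph in $\F_{3,1}$ destroys at most one short cycle, and for $\F_{3,2}$ part (6) supplies two disjoint short cycles in every member of $\F_{2,2}$, at least one of which survives a single subdivision.

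The real work is part (6). Since $\F_{1,2}=\emptyset$, every $G\in\F_{2,2}$ equals $K_4^+\circ(e_1,e_2,a)$ with $a$ forced to be the unique degree-two vertex $w$ of $K_4^+$. My plan is to enumerate the unordered pairs $\{e_1,e_2\}$ up to the automorphism group of $K_4^+$ (a Klein four-group generated by $a\leftrightarrow b$ and $c\leftrightarrow d$), which leaves on the order of a dozen cases spread across the six orbit-combinations of the three edge-orbits. For each case I would exhibit two vertex-disjoint cycles of length at most four: the triangle will typically be either one of $abc,abd$ inherited from $K_4^+$ (when neither $e_1$ nor $e_2$ disturbs it) or the new triangle $vv_1v_2$ formed when $e_1=e_2$, while the second short cycle will come either from an intact portion of $K_4^+$ (another triangle or the $4$-cycle $acwda$) or from the $4$-cycle $vv_1zv_2v$ when $e_1,e_2$ share an endpoint $z$. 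The genuinely non-mechanical step, and the main obstacle, is to identify the two cases (up to symmetry) whose resulting $8$-vertex cubic graph is triangle-free: these must produce $Q_3$ and $V_8$ respectively, to be verified by a direct isomorphism check, and for them only the weaker ``two disjoint short cycles'' conclusion applies.
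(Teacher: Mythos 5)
Your plan is essentially the paper's own approach: the paper gives no written argument for this lemma at all, stating only that ``bearing Lemma \ref{commuting operations} in mind, the following lemma can be easily verified,'' and your proposal is exactly that verification — use Lemma \ref{commuting operations} to reduce to subdivisions whenever $i-1\geq j$, note that $\F_{1,2}=\emptyset$ forces every member of $\F_{2,2}$ to be $K_4^+\circ(e_1,e_2,w)$, and finish by finite case checking. The plan is sound, but two of your stated intermediate facts are off and should be corrected when you execute the checks. First, $L$ (which is $K_{3,3}$ minus an edge, the missing edge joining the two subdivision vertices) has five $4$-cycles, and its edges lie on two or three of them, not ``four $4$-cycles with each edge on exactly two''; the conclusion you actually need — that every edge of $L$ avoids some $4$-cycle, so $L-e$ has girth four — is nevertheless true, and it is this fact (together with the analogous check for the other two members of $\F_{3,1}$) rather than the loose phrase ``a subdivision destroys at most one short cycle'' that makes part (7) work for $\F_{4,1}$. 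Second, in the $\F_{2,2}$ enumeration there are three, not two, $\mathrm{Aut}(K_4^+)$-orbits of pairs $\{e_1,e_2\}$ yielding triangle-free graphs: with your labelling, $\{ab,\text{near}\}$ and a pair of disjoint near edges both produce $V_8$, while a pair of near edges sharing $a$ or $b$ produces $Q_3$ (it is $K_{4,4}$ minus a perfect matching); also note that near pairs sharing $c$ or $d$ form a separate orbit from those sharing $a$ or $b$ and yield a graph with a triangle. So your isomorphism check must accommodate three triangle-free cases collapsing to two isomorphism types; with that adjustment the enumeration goes through and every non-exceptional case does contain a triangle disjoint from a cycle of length at most four, as you predict.
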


Recall that $R$ is the graph obtained from the Petersen graph by deleting an edge.
Note that $R$ has girth five and can be obtained from $V_8$ by subdividing the edges in a matching of size 2, so $R \in \F_{4,2}$.

\begin{lemma}\label{girth 5 i=4}
If $G$ is a member of $\F_{4,j}$ with girth at least five, then $j\geq 2$.
Furthermore, $R$ is the only member of $\F_{4,2}$ with girth at least five.
\end{lemma}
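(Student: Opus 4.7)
The first claim ($j\geq 2$) is immediate: Lemma \ref{basic F}(1) says $\F_{4,0}$ consists of a single 4-cycle (girth $4$), and Lemma \ref{basic F}(7) says every graph in $\F_{4,1}$ has girth at most $4$; so for $j\in\{0,1\}$ no member of $\F_{4,j}$ meets the girth condition.

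For the second claim, let $G\in \F_{4,2}$ have girth at least five. Since $4-1\geq 2$, Lemma \ref{commuting operations} lets me write $G$ as the subdivision of some edge $e$ of a graph $H\in \F_{3,2}$. Any triangle of $H$ survives in $G$ either as a triangle (if it misses $e$) or as a 4-cycle (if it contains $e$), and any 4-cycle of $H$ avoiding $e$ remains a 4-cycle in $G$; thus the girth-five assumption forces $H$ to be triangle-free and every 4-cycle of $H$ to contain $e$. Applying Lemma \ref{commuting operations} once more (since $3-1\geq 2$), $H$ is itself the subdivision of an edge $f$ of some $H'\in \F_{2,2}$. Using Lemma \ref{basic F}(6), I will show $H'\in\{Q_3,V_8\}$: otherwise $H'$ contains a triangle $T$ vertex-disjoint from a cycle $C$ of length at most $4$. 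If $f\notin E(T)$ then $T$ survives as a triangle in $H$, contradicting triangle-freeness; and if $f\in E(T)$ then $T$ becomes a 4-cycle in $H$ while $C$ is unaffected (it is vertex-disjoint from $T$, hence edge-disjoint), so either $C$ is itself a triangle violating triangle-freeness of $H$, or $C$ is a 4-cycle of $H$ vertex-disjoint from the 4-cycle obtained from $T$, and both would need to contain the single edge $e$---impossible.

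Once $H'\in\{Q_3,V_8\}$ is established, the proof concludes with a short case analysis of the 4-cycles surviving in $H$. In $Q_3$, every edge lies on exactly two of the six 4-cycles, so subdividing $f$ still leaves four 4-cycles in $H$ and no single edge of $H$ can lie on all of them, ruling out $H'=Q_3$. In $V_8$, each of the four 4-cycles uses two cycle-edges (each on one 4-cycle) and two ``diagonals'' (each on two 4-cycles); if $f$ is a cycle-edge then three 4-cycles remain with no common edge, so $f$ must be a diagonal. In that case exactly two 4-cycles of $H$ remain and they share a unique common edge, which must be $e$; the automorphism group of $V_8$ acts transitively on diagonals, so every valid choice produces the same graph up to isomorphism. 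To verify that this graph is $R$, I add an edge between the two degree-$2$ vertices of $G$: a brief check shows no short path joins these two subdivision vertices, so the resulting cubic graph on ten vertices still has girth $5$, and by the Moore bound for cubic girth-$5$ graphs it must be the Petersen graph, yielding $G = R$. The main obstacle is the last paragraph's 4-cycle bookkeeping in $Q_3$ and $V_8$; everything else is a mechanical application of Lemmas \ref{commuting operations} and \ref{basic F}.
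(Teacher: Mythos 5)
Your proof is correct and follows essentially the same route as the paper: apply Lemma \ref{commuting operations} to reduce to a graph in $\F_{2,2}$, use Lemma \ref{basic F}(6) and the girth hypothesis to force $Q_3$ or $V_8$ (then exclude $Q_3$), and identify the unique girth-five graph obtained from $V_8$ by two subdivisions as $R$. The paper leaves the $Q_3$ exclusion and the final identification as "not hard to see"; your 4-cycle counting and Moore-bound argument are just legitimate fillings-in of those steps.
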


\begin{proof}
By Lemma \ref{basic F}, $j \geq 2$.
Now we assume $j=2$.  
By Lemma \ref{commuting operations}, $G$ can be obtained from a graph $H_2\in\F_{2,2}$ by subdividing two edges, not necessarily distinct.  
By Lemma \ref{basic F}, $H_2$ is isomorphic to $Q_3$ or $V_8$, or $H_2$ contains a triangle disjoint from a cycle of length at most four.  Since $G$ has girth at least five, $H_2$ is isomorphic to $V_8$.  
It is not hard to see that $R$ is the unique graph with girth at least five that can be obtained from $V_8$ by subdividing two edges.
\end{proof}

We define $R_1$ and $R_2$ to be the graphs depicted in Figure \ref{girth 5 F_{3,3}}.
They can be obtained from $Q_3$ and $V_8$ respectively by subdividing an edge and applying operation $\circ$, so $R_1,R_2 \in \F_{3,3}$.

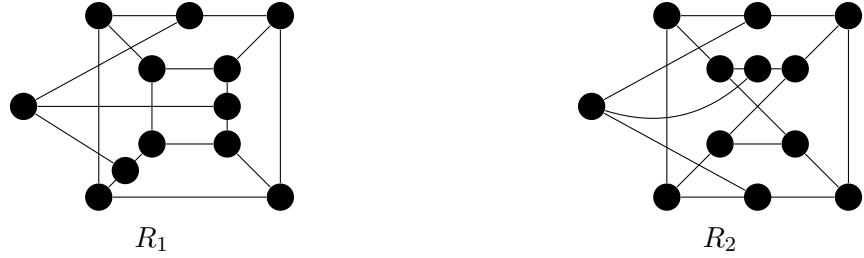
\begin{figure}
\begin{minipage}[b]{.5\linewidth}
	\centering
	\begin{tikzpicture}
		\tikzstyle{every node}=[fill=black, circle]
		\node (a) at (0, 0) {}; \node (1) at ($(a) + (135:1)$) {};
		\node (b) at (1, 0) {}; \node (2) at ($(b) + (45:1)$) {};
		\node (c) at (1, -1) {}; \node (3) at ($(c) + (-45:1)$) {};
		\node (d) at (0, -1) {}; \node (4) at ($(d) + (-135:1)$) {};
		
		\draw (1) -- (2) -- (3) -- (4) -- (1);
		\draw (a) -- (b) -- (c) -- (d) -- (a);
		
		\draw (1) -- (a);
		\draw (2) -- (b);
		\draw (3) -- (c);
		\draw (4) -- (d);
		
		\node (v1) at ($(4)!.5!(d)$) {};
		\node (v2) at ($(b)!.5!(c)$) {};
		\node (v3) at ($(1)!.5!(2)$) {};
		\node (v) at ($(1)!.5!(4) + (-1,0)$) {};
		
		\draw (v) -- (v1); \draw (v) -- (v2); \draw (v) -- (v3);
	\end{tikzpicture}\\$R_1$
\end{minipage}%
\begin{minipage}[b]{.5\linewidth}
	\centering
	\begin{tikzpicture}
		\tikzstyle{every node}=[fill=black, circle]
		\node (a) at (0, 0) {}; \node (1) at ($(a) + (135:1)$) {};
		\node (b) at (1, 0) {}; \node (2) at ($(b) + (45:1)$) {};
		\node (c) at (1, -1) {}; \node (3) at ($(c) + (-45:1)$) {};
		\node (d) at (0, -1) {}; \node (4) at ($(d) + (-135:1)$) {};
		
		\draw (1) -- (2) -- (3) -- (4) -- (1);
		\draw (a) -- (b) -- (d) -- (c) -- (a);
		
		\draw (1) -- (a);
		\draw (2) -- (b);
		\draw (3) -- (c);
		\draw (4) -- (d);
		
		\node (v1) at ($(4)!.5!(3)$) {};
		\node (v2) at ($(a)!.5!(b)$) {};
		\node (v3) at ($(1)!.5!(2)$) {};
		\node (v) at ($(1)!.5!(4) + (-1,0)$) {};
		
		\draw (v) -- (v1); \draw (v) to[bend right] (v2); \draw (v) -- (v3);
	\end{tikzpicture}\\$R_2$
\end{minipage}
\caption{The girth five graphs in $\F_{3,3}$}
\label{girth 5 F_{3,3}}
\end{figure}

\begin{lemma}\label{girth 5 i=3}
If $G\in\F_{3,j}$ has girth at least five, then $j=3$ and $G$ is isomorphic to $R_1$ or $R_2$.
\end{lemma}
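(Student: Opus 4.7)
The plan is to peel off the two constructive operations that produced $G$ and reduce to an analysis of subdivisions of $Q_3$ and $V_8$.

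\textbf{Step 1: reducing to $j=3$ and to a canonical construction.} By items (1), (5), and (7) of Lemma \ref{basic F}, every member of $\F_{3,0} \cup \F_{3,1} \cup \F_{3,2}$ has girth at most four, so the girth hypothesis forces $j = 3$. Since $\F_{2,3} = \emptyset$ (because $j > i$), the only recipe for building $G \in \F_{3,3}$ from the definition of the family is $G = H \circ (e_1, e_2, a)$ for some $H \in \F_{3,2}$, where $a$ is the unique degree-two vertex of $H$ (there is exactly $i - j = 1$ such vertex). Because $i - 1 = 2 \ge 2 = j$ for $\F_{3,2}$, Lemma \ref{commuting operations} then exhibits $H$ as some $H' \in \F_{2,2}$ with a single edge $f$ subdivided, and $a$ is the new vertex introduced on $f$.

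\textbf{Step 2: forcing $H' \in \{Q_3, V_8\}$.} First I rule out degenerate choices of $\{e_1, e_2\}$: if $e_1 = e_2$, or if either $e_1$ or $e_2$ is incident to $a$, then $v$ together with $v_1, v_2$ (or with $a$ and a $v_i$) forms a triangle in $G$. So $e_1, e_2$ are distinct edges of $H$ disjoint from $a$, and may be identified with edges of $H'$. The key bookkeeping fact is that for any cycle $K$ of $H'$, the length of $K$ in $G$ equals $|K|$ plus the number of members of $\{f, e_1, e_2\}$ that lie on it. Suppose $H' \notin \{Q_3, V_8\}$. By Lemma \ref{basic F}(\ref{F_{2,2}}), $H'$ contains a triangle $T$ disjoint from a cycle $C$ with $|C| \le 4$. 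Raising $T$ to length at least five requires at least two of $\{f, e_1, e_2\}$ on $T$, and raising $C$ requires at least $5 - |C|$ on $C$; since $T$ and $C$ are vertex-disjoint these contributions are disjoint, forcing at least $2 + (5 - |C|)$ subdivisions. The case $|C| = 3$ needs at least four, which is impossible; so $|C| = 4$, exactly two of $\{f, e_1, e_2\}$ lie on $T$, and exactly one lies on $C$. But any two edges of the triangle $T$ share a vertex $z$, so the two corresponding subdivision vertices---two of $\{a, v_1, v_2\}$---both neighbour $z$ in $G$, and joining them through $v$ produces a $4$-cycle, a contradiction.

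\textbf{Step 3: enumerating the girth-five outputs.} Since $Q_3$ is edge-transitive the choice of $f$ is unique up to isomorphism, and $V_8$ has only a couple of edge-orbits; this pins down the underlying $H$ in each case. For each such $H$ I compute the automorphism group fixing $a$, enumerate the orbits of unordered pairs $\{e_1, e_2\}$ of edges of $H$ disjoint from $a$, and discard each pair that creates a cycle of length at most four either among the original cycles of $H'$ (via the length formula above) or through the new vertex $v$ (via short paths among $\{a, v_1, v_2\}$). The surviving orbits yield, up to isomorphism, precisely the graphs $R_1$ (from $Q_3$) and $R_2$ (from $V_8$) depicted in Figure \ref{girth 5 F_{3,3}}. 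The main obstacle is this final enumeration; it is finite but not entirely mechanical, and the bulk of the work is in ruling out short cycles through $v$ for each candidate pair $\{e_1, e_2\}$.
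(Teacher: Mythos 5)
Your Steps 1 and 2 are correct: the reduction to $j=3$, the identification of $G$ as a graph $H'\in\F_{2,2}$ with three edges $f,e_1,e_2$ subdivided plus a new vertex $v$ joined to the three subdivision vertices, and the counting argument with a triangle $T$ disjoint from a short cycle $C$ (via Lemma \ref{basic F}) forcing $H'\in\{Q_3,V_8\}$ all check out; the $4$-cycle through $v$ produced when two of the subdivided edges lie on $T$ is exactly right, and this part parallels how the paper handles $\F_{4,2}$ in Lemma \ref{girth 5 i=4}. The genuine gap is Step 3. The claim that the surviving choices of $(f,\{e_1,e_2\})$ yield, up to isomorphism, precisely $R_1$ and $R_2$ is the entire content of the lemma's uniqueness statement, and you do not carry out the enumeration --- you only describe how one would, and you yourself flag it as the main obstacle. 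Even after quotienting by the stabilizer of $f$ in $\mathrm{Aut}(Q_3)$ (respectively by the edge-orbits of $V_8$), there remain dozens of candidate pairs $\{e_1,e_2\}$, each requiring a girth verification (every $4$-cycle of $H'$ must be hit, and $a,v_1,v_2$ must be pairwise at distance at least three in $G-v$) and then an isomorphism check against $R_1$ or $R_2$; none of this is done, so as written the argument does not establish the conclusion.

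The paper's proof avoids this entirely: once $j=3$, the graph $G\in\F_{3,3}$ is cubic on $i+3j=12$ vertices and has girth at least five, and by the known census of cubic graphs (cited as \cite{bccs}) there are exactly two such graphs up to isomorphism; since $R_1,R_2\in\F_{3,3}$ are non-isomorphic graphs of girth five, $G$ must be isomorphic to one of them. So your Step 1 alone, combined with that citation, already finishes the proof, making Steps 2 and 3 unnecessary; if instead you want a self-contained argument avoiding the citation, your Step 2 is a sound reduction, but the finite enumeration in Step 3 must actually be executed, not just outlined.
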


\begin{proof}
By Lemma \ref{basic F}, $j=3$,
Therefore $G$ is a cubic graph on twelve vertices.  
It is known that there are only two cubic graphs on twelve vertices with girth at least five.  
(See \cite{bccs}, page 8.)
So there are at most two graphs in $\F_{3,3}$ having girth at least five.
It is easy to see that $R_1$ and $R_2$ are non-isomorphic graphs with girth five.
Hence they are the graphs in $\F_{3,3}$ with girth at least five.
\end{proof}

For integers $i,j,k$ with $i \geq 1$, $0 \leq j \leq i$ and $k \geq 0$, define $\F^4_{i,j,k}$ to be the set of 2-connected subcubic multigraphs obtained from $k$ disjoint copies of $L$ and a copy of a graph in $\F_{i,j}$ by adding $k+1$ edges; define $\F^5_{i,j,k}$ to be the set of 2-connected subcubic multigraphs obtained from $k$ disjoint copies of $R$ and a copy of a graph in $\F_{i,j}$ by adding $k+1$ edges. 
Note that when $k \ge 1$, each added edge must have ends in different copies; when $k=0$, every member of $F_{i,j,k}^4 \cup \F_{i,j,k}^5$ is obtained from a graph in $\F_{i,j}$ by adding an edge.
Observe that for $g\in\{4, 5\}$, $\F_{i,j,k}^g$ is nonempty only when $i-j \geq 2$ since each of its members is subcubic.
Since $K_{3,3}$ can be obtained from $L$ by adding an edge and $L \in \F_{3,1}$, we know $K_{3,3}\in \F^4_{3,1,0} \cap \F^5_{3,1,0}$.  

\section{The error functions} \label{sec:error terms}

Define $t_4 = \frac{2}{9}$ and $t_5 = \frac{1}{5}$.
We need the following Lemmas to ensure the error functions needed to formally state Theorem \ref{real main thm} are well-defined.

\begin{lemma} \label{well-defined 1}
Let $g\in\{4,5\}$.
If $G \in \F^g_{3,j,k}\cap \F_{i',j'}$ for some nonnegative integers $j,k,i',$ and $j'$ with $j \leq 3$, $1 \leq i' \leq 4$ and $j' \leq i'$, then
\begin{equation*}
  1 - t_g + j(1 - 5t_g) - it_g = 1+j'(1-5t_g)-i't_g,
\end{equation*}
where $i=3$.
\end{lemma}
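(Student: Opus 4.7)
My plan is to extract $|V(G)|$ and $|E(G)|$ from each family membership and then force all the parameters to line up. From $G \in \F_{i',j'}$, Lemma \ref{basic F} will give $|V(G)| = i' + 3j'$ and $|E(G)| = i' + 5j'$. From $G \in \F^g_{3,j,k}$, the construction writes $G$ as $k$ disjoint copies of $L$ (if $g=4$) or $R$ (if $g=5$), one copy of some $H \in \F_{3,j}$, and $k+1$ additional edges; using $L \in \F_{3,1}$ (so $|V(L)|=6,|E(L)|=8$) and $R \in \F_{4,2}$ (so $|V(R)|=10,|E(R)|=14$) together with Lemma \ref{basic F} applied to $H$, this yields $|V(G)| = 3 + 3j + a_g k$ and $|E(G)| = 4 + 5j + b_g k$, where $(a_g,b_g)=(6,9)$ when $g=4$ and $(a_g,b_g)=(10,15)$ when $g=5$.

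Next, I plan to equate the two pairs of expressions to get a linear system in $i'$ and $j'$. Subtracting the vertex equation from the edge equation produces $2(j'-j) = (b_g - a_g)k + 1$, i.e., $3k+1$ for $g=4$ and $5k+1$ for $g=5$. Integrality of $j'$ forces $k$ to be odd, and back-substitution pins $i'$ to $3(k+1)/2$ for $g=4$ and $(5k+3)/2$ for $g=5$. The hypothesis $i' \le 4$ then forces $k = 1$, giving $(i', j'-j) = (3, 2)$ when $g=4$ and $(i', j'-j) = (4, 3)$ when $g=5$; both are consistent with $j' \le i'$, which in turn forces $j \in \{0,1\}$ (a by-product, not needed for the algebra).

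With $i=3$, $i'$, and $j'-j$ now determined, verifying the identity reduces to a short substitution. For $g=4$, plugging $t_4 = 2/9$ into both sides and collecting yields $(1-j)/9$ on each side. For $g=5$, the coefficient $1 - 5t_5$ vanishes, so all $j$- and $j'$-dependent terms drop out and both sides collapse to $1 - 4t_5 = 1/5$.

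The only genuinely delicate step is the first one: correctly bookkeeping vertex and edge counts in the hybrid construction of a member of $\F^g_{3,j,k}$, including the contribution of the $k+1$ added edges. Once the linear system is set up and the hypothesis $i' \le 4$ is exploited to isolate $k = 1$ and $i' \in \{3,4\}$, everything else is routine arithmetic.
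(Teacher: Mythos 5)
Your proof is correct, and it takes a cleaner route than the paper's. The paper first rules out $j=0$ by a structural argument: if $j=0$, the triangle coming from the $\F_{3,0}$ piece contains a vertex of degree two in $G$, and suppressing it (via Lemma \ref{commuting operations}) produces a non-simple member of $\F_{i'-1,j'}$, which by Lemma \ref{basic F} forces $(i',j')\in\{(2,0),(3,0)\}$ and hence too few edges. Having established $j=1$, the paper concludes $G$ is cubic, so $i'=j'$, and only then does the vertex/edge count (e.g.\ $6k+6=4i'$, $9k+9=6i'$) pin down $k=1$ and $i'$. You bypass the suppression step entirely by using the general formulas $\lvert V(G)\rvert=i'+3j'$, $\lvert E(G)\rvert=i'+5j'$ from Lemma \ref{basic F}(8) without assuming $i'=j'$, and eliminating: the difference of the two counts gives $2(j'-j)=(b_g-a_g)k+1$, whose parity forces $k$ odd (so in particular $k\geq 1$), and back-substitution makes $i'$ a function of $k$ alone, so $i'\leq 4$ yields $k=1$ and $(i',j'-j)=(3,2)$ or $(4,3)$. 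That determines exactly what is needed for the identity (only $i'$ and $j'-j$ matter, since for $g=5$ the coefficient $1-5t_5$ vanishes and for $g=4$ both sides reduce to $(1-j)/9$), so never pinning down $j$ itself is not a gap. What the paper's route buys is the structural fact $j=1$ (hence $G$ cubic), which is not needed for this lemma; what your route buys is independence from Lemma \ref{commuting operations} and the simplicity classification in Lemma \ref{basic F}(2), replacing them with a two-equation linear elimination. Your bookkeeping of the hybrid construction ($3+3j+a_gk$ vertices, $4+5j+b_gk$ edges, including the $k+1$ added edges) is accurate, so the argument stands as written.
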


\begin{proof}
Note that $0 \leq j \leq 1$.  We first show $j=1$.  Suppose to the contrary that $j = 0$.
Since $G \in \F^g_{3,0,k}$, $G$ contains a triangle that contains a vertex $v$ of degree two in $G$.
So $i'-j' \geq 1$.
By Lemma \ref{commuting operations}, the mulitgraph $G'$ obtained from $G$ by suppressing $v$ belongs to $\F_{i'-1,j'}$.
But $G'$ is not simple, so $(i',j')=(2,0)$ or $(3,0)$.
However, every member of $\F^g_{3,0,k}$ has more than three edges, a contradiction.

Therefore $j=1$, as claimed.  Hence $G$ is cubic, and this implies that $i'=j'$.

Suppose that $g=4$.
Since $G \in \F_{3,1,k}^4$, $\lvert V(G) \rvert = 6k+6$ and $\lvert E(G) \rvert = 8k+8+(k+1) = 9k+9$.
But $G \in \F_{i',i'}$, so $6k+6 = 4i'$ and $9k+9 = 6i'$.
Hence, $3k+3=2i'$.
But $i' \leq 4$, so $k=1$ and $i'=3$.
Then $1 - t_g + j(1 - 5t_g) - it_g = 1+j'(1-5t_g)-i't_g$.

So $g=5$.
Since $G \in \F_{3,1,k}^5$, $\lvert V(G) \rvert = 10k+6$ and $\lvert E(G) \rvert = 14k+8+(k+1) = 15k+9$.
But $G \in \F_{i',i'}$, so $10k+6=4i'$ and $15k+9=6i'$.
That is, $5k+3=2i'$.
Since $1 \leq i' \leq 4$, $k=1$ and $i'=4$.
Then $1 - t_g + j(1 - 5t_g) - it_g = 1+j'(1-5t_g)-i't_g$.
\end{proof}

\begin{lemma} \label{well-defined 2}
Let $g\in\{4,5\}$.
If $G \in \F^g_{i,j,k}\cap \F_{i',j'}$ for some nonnegative integers $i,j,k,i',$ and $j'$ with $j \leq i$, $1 \leq i' \leq 4$ and $j' \leq i'$, then 
$$\max\{1 - t_g + j(1 - 5t_g) - it_g,0\} = \max\{1+j'(1-5t_g)-i't_g,0\}.$$
\end{lemma}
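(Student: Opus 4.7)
The plan is to reduce the claimed identity to a purely arithmetic one by counting $\lvert V(G) \rvert$ and $\lvert E(G) \rvert$ from the two given descriptions of $G$ and then solving the resulting system.

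First, using Lemma \ref{basic F}(8) together with the fact that $L$ has $6$ vertices and $8$ edges, that $R$ has $10$ vertices and $14$ edges, and that $k+1$ further edges are added in forming $\F^g_{i,j,k}$, equating the vertex and edge counts coming from the two descriptions of $G$ gives
\begin{align*}
i + 3j + c_g k &= i' + 3j', \\
i + 5j + (c_g+3)k + 1 &= i' + 5j',
\end{align*}
where $c_4 = 6$ and $c_5 = 10$. Subtracting these yields $2j' = 2j + (2g-5)k + 1$, whose right-hand side is even only when $k$ is odd; in particular, $k = 0$ cannot occur, so the hypothesis is vacuous in that case. For any odd $k \geq 3$, solving the system for $i'$ gives $i' = i + 3(k-1)/2$ when $g = 4$ and $i' = i + (5k-3)/2$ when $g = 5$, and combined with the structural requirement $i - j \geq 2$ (needed for $\F^g_{i,j,k}$ to be nonempty, since its members are subcubic) this forces $i' \geq 5$, contradicting $i' \leq 4$. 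Hence $k = 1$ is the only possibility, from which one reads off $i' = i$, $j' = j+2$ if $g = 4$, and $i' = i + 1$, $j' = j + 3$ if $g = 5$.

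Finally, I substitute these values into both sides of the proposed identity. A direct computation shows that, even before taking the maximum with $0$, the two sides agree on the nose: both equal $(7 - j - 2i)/9$ when $g = 4$, and both equal $(4 - i)/5$ when $g = 5$ (where the latter uses that $1 - 5t_5 = 0$, so all dependence on $j$ and $j'$ drops out). Since the two underlying expressions coincide exactly, so do their maxima with $0$, and the lemma follows.

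The only mildly delicate step is eliminating the odd values $k \geq 3$; it relies on the tight interaction between $1 \leq i' \leq 4$ and $i - j \geq 2$, but no subtler structural input is needed beyond what already appears in the proof of Lemma \ref{well-defined 1}. In effect, Lemma \ref{well-defined 2} is best viewed as the clean restatement of Lemma \ref{well-defined 1} obtained by absorbing the irrelevant negative cases into the $\max$.
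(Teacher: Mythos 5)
Your proof is correct, but it takes a genuinely different route from the paper. The paper never pins down the parameters exactly: it rules out $k=0$ by the same parity count, discards $i=2$ via parallel edges, delegates $i=3$ to Lemma \ref{well-defined 1}, and for $i\geq 4$ only shows via edge-count \emph{lower bounds} that both maxima vanish. You instead equate both the vertex and the edge counts of the two descriptions, deduce that $k$ must be odd, eliminate every odd $k\geq 3$ against $1\leq i'\leq 4$ using $i-j\geq 2$, and then read off the exact relations $i'=i,\ j'=j+2$ (for $g=4$) and $i'=i+1,\ j'=j+3$ (for $g=5$), after which the expressions inside the maxima coincide identically. This is cleaner and slightly stronger: it shows the equality holds before truncating at $0$ in every non-vacuous case, and it would yield Lemma \ref{well-defined 1} as a byproduct rather than using it as an ingredient (so your closing remark has the dependence backwards relative to the paper, though that is only commentary). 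One slip to fix: your displayed edge equation is wrong for $g=5$, since a copy of $R$ contributes $14$ edges, so the coefficient of $k$ should be $15$, not $c_5+3=13$; for $g=4$ the coefficient $9=c_4+3$ is correct but accidental. Your subsequent parity relation $2j'=2j+(2g-5)k+1$ and the solved values $i'=i+3(k-1)/2$, $i'=i+(5k-3)/2$ are exactly what the correct counts give, so the error is confined to the display and does not affect the argument; just restate the edge count as $i+5j+(e_g+1)k+1=i'+5j'$ with $e_4=8$, $e_5=14$.
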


\begin{proof}
Let $x=\max\{1 - t_g + j(1 - 5t_g) - it_g,0\}$ and $y=\max\{1+j'(1-5t_g)-i't_g,0\}$.
If $k=0$, then by comparing the number of vertices and edges of the graphs in $\F_{i,j,0}^g$ and $\F_{i',j'}$, we obtain that $i'+3j'=i+3j$ and $i'+5j'=i+5j+1$, which implies that $2j'=2j+1$, contradicting that $j$ is an integer.
So $k \geq 1$.
Hence, $|E(G)| \geq 10$.

Recall that $\F_{i,j,k}^g \neq \emptyset$ only when $i-j \geq 2$.
If $i=2$, then $j=0$ and every graph in $\F_{i,j,k}^g$ has a pair of parallel edges, so $(i',j')=(2,0)$, but then $i'+5j'=2<10$.
So $i \geq 3$.
Furthermore, if $i=3$, then $x=y$ by Lemma \ref{well-defined 1}.
Therefore, $i \geq 4$.

We first assume $g=4$.
Then $x = \max\{\frac{7-2i-j}{9}, 0\}$, and $y = \max\{\frac{9-2i'-j'}{9}, 0\}$.
Since $k \geq 1$ and $G \in \F^g_{i,j,k}$, $\lvert E(G) \rvert \geq i+5j+10$; since $G \in \F_{i',j'}$, $\lvert E(G) \rvert = i'+5j'$.
As $i \geq 4$, we know $i'+5j' \geq 14+5j \geq 14$.
So either $i' \geq 5$, or $i'=4$ and $j' \geq 2$, or $i'=3$ and $j' \geq 3$.
In any case, $y=0$ and $x=0$.

So we may assume $g=5$.
Then $x = \max\{\frac{4-i}{5}, 0\}$, and $y = \max\{\frac{5-i'}{5}, 0\}$.
Since $k \geq 1$, $\lvert E(G) \rvert \geq i+5j+16=5j+20$.
So $i'+5j' \geq 5j+20$.
Hence either $i' \geq 5$, or $i'=j'=4$ and $j=0$.
But when $i \geq 4, j=0$, every graph in $\F^g_{i,j,k}$ is not cubic, but every graph in $\F_{4,4}$ is.
So $i' \geq 5$ and hence $x=y=0$.
\end{proof}

We are now ready to define the error functions necessary to formally state Theorem \ref{real main thm}.  
For $g\in\{4,5\}$ and a multigraph $G$, we define $r_g(G)$ to be the sum of $\epsilon_g(B)$ over the blocks $B$ of $G$, where $\epsilon_g(B)$ is defined as follows.
$$\epsilon_g(B) = \left\{
\begin{array}{l l}
\max\{1- j(5t_g-1) - it_g,0\} & \mbox{if } B\in \F_{i,j}, i\geq 1, i \geq j\geq 0,\\
\max\{1 - t_g - j(5t_g-1) - it_g, 0\} & \mbox{if } B\in\F^g_{i,j,k}, i\geq 1, i \geq j \geq 0, k\geq 0,\\
-t_g & \mbox{if } B = K_2,\\
0 & \mbox{otherwise.}
\end{array}\right.$$
Note that $\epsilon_g(B)$ is well-defined by Lemma \ref{well-defined 2}.

Theorem \ref{real main thm} is the main theorem in this paper.
It will be proved in Section \ref{proof section}.

\begin{thm} \label{real main thm}
Let $g \in \{4,5\}$.
If $G$ is a subcubic graph with no two disjoint cycles with length less than $g$, then $\phi(G) \leq t_g\lvert E(G) \rvert + r_g(G)$.
\end{thm}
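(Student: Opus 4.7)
The plan is to prove Theorem \ref{real main thm} by induction on $|E(G)|$, breaking ties by $|V(G)|$, considering a hypothetical minimum counterexample $G$. The first order of business is to reduce to the 2-connected case: $|E(G)|$ and $r_g(G)$ are manifestly additive over blocks, $\phi$ is additive over blocks, and the $-t_g$ contribution assigned to each $K_2$ block by $\epsilon_g$ is arranged precisely so that the block-wise sum of $t_g|E(B)|+\epsilon_g(B)$ equals $t_g|E(G)|+r_g(G)$ (trivial $K_2$ blocks at cut-vertices absorb the discrepancy). The hypothesis ruling out two disjoint short cycles passes to each block, so the inductive step disposes of any $G$ with a cut-vertex, and I may assume $G$ is 2-connected.

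Once $G$ is 2-connected, I would perform local reductions inverting the two operations of Section \ref{sec:basic operation}. If $G$ has a vertex $v$ of degree 2, suppressing $v$ yields $G'$ with $|E(G')|=|E(G)|-1$; by Lemma \ref{lemma:subdivideedgestrong}, $\phi(G)\le\phi(G')$, and one checks that the induction hypothesis for $G'$ gives the desired bound for $G$, provided the error term does not spike under suppression — this is where one uses the precise structure of $\F_{i,j}$ and $\F^g_{i,j,k}$ encoded in $\epsilon_g$ together with Lemmas \ref{well-defined 1} and \ref{well-defined 2}. More substantially, if $G$ contains the configuration produced by operation $\circ$ — a degree-3 vertex $v$ adjacent to two degree-2 vertices on appropriately placed edges — the inverse of $\circ$ removes five edges at the cost of adding one vertex to the feedback set, and the identity $5t_5 = 1$ (and the near-identity for $g=4$) is exactly what makes this reduction balance, explaining the choice of $t_g$.

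After all such reductions are impossible, $G$ is forced to have all blocks in $\F_{i,j}$ or $\F^g_{i,j,k}$, for which the inequality holds by definition of $\epsilon_g$; indeed, Section \ref{sec:error terms} will deliver a stronger edge-robust version (mirroring the "$W$ of size $k$" formulation of Lemmas \ref{lemma:subdivideedgestrong} and \ref{lemma:op1edgestrong}) tailored to carrying out the inductive step just described. Any residual small exceptional graphs — namely $Q_3, V_8, R_1, R_2$, and small members of $\F^g_{i,j,k}$ — are either ruled out by the girth hypothesis or verified by the explicit error computations promised in Section \ref{sec:precise value}.

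The hardest step, and the main obstacle, is the structural analysis when $g=5$. Here, even after exhausting all the reductions, $G$ could conceivably be a 2-connected cubic graph of girth at least five that is neither in any $\F_{i,j}$ nor reducible. The strategy flagged in the introduction is to show, via a careful case analysis on neighborhoods of vertices and short 5-cycles, that any such $G$ satisfies the sufficient-and-necessary condition of Section \ref{sec:dodecahedron} for being the dodecahedron. But the dodecahedron satisfies Theorem \ref{real main thm} by a direct check ($|E|=30$, $t_5|E|=6$, and $\phi=6$), contradicting the minimality of $G$. The technical heart of the argument is therefore this discharging-flavored case analysis showing that the local structure around every vertex of a fully reduced counterexample agrees with the dodecahedron; this is where the girth-5 rigidity is exploited and where all the setup of Sections \ref{sec:basic operation}--\ref{sec:dodecahedron} is brought to bear.
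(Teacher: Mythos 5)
Your road map coincides with the paper's strategy (minimal counterexample, reduction to the $2$-connected case, undoing subdivision and the operation $\circ$, with the families $\F_{i,j}$ and $\F^g_{i,j,k}$ absorbing the error, and the dodecahedron characterization closing the $g=5$ case), but there is a genuine gap in how you claim the reductions work. When you suppress a degree-two vertex or undo $\circ$, the resulting graph $G'$ need not satisfy the hypothesis of the theorem: it may contain two disjoint cycles of length less than $g$ (and, for suppression, may even fail to be simple), in which case the induction hypothesis simply does not apply, no matter how the error term behaves. Your sketch only guards against the error term ``spiking,'' and your intermediate claim that once all reductions are impossible ``$G$ is forced to have all blocks in $\F_{i,j}$ or $\F^g_{i,j,k}$'' is unjustified --- indeed it is not what happens, since the dodecahedron itself is fully reduced and lies in no family. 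In the paper this failure mode is not an obstacle to be wished away but the engine of the argument: Lemma \ref{lemma:contracting} converts a failed suppression into a $g$-cycle through the degree-two vertex disjoint from a shorter cycle, and Lemma \ref{lemma:op2makesbad} converts a failed inverse-$\circ$ into two disjoint short cycles in $G'$; combined with triangle-freeness and girth five (Lemmas \ref{lemma:notriangle} and \ref{girth 5}) this forces $G$ to be cubic, kills $g=4$ outright, and delivers the local disjoint-$5$-cycle condition (Lemma \ref{lemma:vertexadjacentto5-cycles}) needed to invoke Theorem \ref{characterization dodecahedron}.

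Beyond that, the portion you yourself flag as the ``technical heart'' --- showing that a fully reduced counterexample satisfies the hypothesis of Theorem \ref{characterization dodecahedron} --- is only announced, not carried out, and it is not a routine neighborhood case analysis: the paper needs the splitter machinery (Lemmas \ref{no splitter with B in F}--\ref{splitter 2}), internal $3$-edge-connectivity (Lemma \ref{lemma:3-connected}), and the $K_{2,3}$ and $4$-cycle arguments, all of which rely on the exact values of $r_g$ computed in Section \ref{sec:precise value} and on $G$ itself not lying in any family (Lemma \ref{vertex property F}). As written, your proposal is a faithful outline of the paper's approach, but the inductive step is incorrectly justified in precisely the case where the reduction destroys the no-two-disjoint-short-cycles hypothesis, and the structural analysis on which the whole proof hinges is missing.
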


To close this section, we prove that Theorem \ref{real main thm} holds for graphs in $\F_{i,j}$ for all nonnegative integers $i,j$ in a stronger sense, which will be used in Section \ref{proof section}.
The proof also provides the intuition of the definition of the functions $\epsilon_g$.

\begin{lemma} \label{edge property F}
Let $g\in\{4,5\}$.  
If $G \in \F_{i,j}$ for some nonnegative integers $i,j$ with $j \leq i$, then $\phi(G-e) \leq t_g|E(G)| + \epsilon_g(G) - 1$ for every edge $e \in E(G)$.
\end{lemma}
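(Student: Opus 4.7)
The plan is to prove the lemma by induction on $i+j$, exploiting the recursive construction of $\F_{i,j}$ combined with Lemmas \ref{lemma:subdivideedgestrong} and \ref{lemma:op1edgestrong}.

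The base case is $(i,j)=(1,0)$, for which $G = C_1$ is a single loop. Then $|E(G)|=1$, $G-e$ is an isolated vertex with $\phi(G-e)=0$, and $t_g|E(G)|+\epsilon_g(G)-1 = t_g + (1-t_g) - 1 = 0$, so the inequality holds.

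For the inductive step I would split on whether $i > j$ or $i = j$. If $i > j$, then Lemma \ref{commuting operations} lets me assume $G$ is obtained by subdividing an edge of some $H \in \F_{i-1,j}$. Applying Lemma \ref{lemma:subdivideedgestrong} with $k=1$, together with the uniform bound $\phi(H-e)\leq t_g|E(H)|+\epsilon_g(H)-1$ supplied by induction, and using $|E(G)|=|E(H)|+1$, yields $\phi(G-e')\leq t_g|E(G)|-t_g+\epsilon_g(H)-1$ for every edge $e'$ of $G$. So this case reduces to the numerical inequality $\epsilon_g(H)-t_g\leq \epsilon_g(G)$. If instead $i = j$, then $\F_{i-1,j}$ is empty, so $G$ must arise from some $H \in \F_{i,j-1}$ by the $\circ$ operation; since this adds $5$ edges, Lemma \ref{lemma:op1edgestrong} with $k=1$ and the induction hypothesis give $\phi(G-e')\leq t_g|E(G)|-5t_g+\epsilon_g(H)$, reducing the step to $\epsilon_g(H)+1-5t_g\leq \epsilon_g(G)$.

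Both numerical inequalities are straightforward from the definition of $\epsilon_g$: for fixed $j$ the positive formula defining $\epsilon_g(\cdot)$ decreases by exactly $t_g$ when $i$ grows by one (matching the subdivision case), and for fixed $i$ it decreases by exactly $5t_g-1$ when $j$ grows by one (matching the $\circ$ case). In each case, when both positive formulas are nonnegative the required inequality is in fact an equality, which is the design principle behind the definition of $\epsilon_g$.

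The only real obstacle, though minor, is the bookkeeping with the $\max\{\,\cdot\,,0\}$ in the definition of $\epsilon_g$: one must verify the inequalities when the positive branch for $H$ is nonnegative but that for $G$ is not. This is a small case analysis made easy by the explicit values $5t_4-1 = \tfrac{1}{9}$ and $5t_5-1 = 0$, ensuring the slack lost by passing from the positive formula to $0$ is always dominated by $t_g$ (in the subdivision case) or $5t_g - 1$ (in the $\circ$ case). Assembling the base case with the two subcases then completes the induction.
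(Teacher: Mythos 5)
Your proposal is correct and follows essentially the same route as the paper: induction on $i+j$, applying Lemma \ref{lemma:subdivideedgestrong} and Lemma \ref{lemma:op1edgestrong} with $k=1$, and using that the positive branch of $\epsilon_g$ drops by exactly $t_g$ under subdivision and by $5t_g-1$ under the operation $\circ$, with the $\max\{\cdot,0\}$ causing no loss since $t_g>0$ and $5t_g-1\geq 0$. The only cosmetic differences are that the paper takes all of $\F_{i,0}$ (the cycles) as the base case and splits according to which construction produced $G$, rather than on $i>j$ versus $i=j$ via Lemma \ref{commuting operations}.
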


\begin{proof}
We shall prove this lemma by induction on $i+j$.
When $j=0$, $G$ is a cycle of length $i$, so $\phi(G-e)=0 \leq t_g \lvert E(G) \rvert - 1 + \epsilon_g(G)$.
This prove the base case.  
Therefore we may assume that the lemma holds for all graphs in $\F_{i',j'}$ with $i'+j'<i+j$ or $j'=0$.

We first assume that there exists a multigraph $H$ in $\F_{i-1,j}$ such that $G$ is obtained from $H$ by subdividing an edge.
By the induction hypothesis, $\phi(H-f) \leq t_g|E(H)| + \epsilon_g(H) - 1$  for every $f \in E(H)$.
By Lemma \ref{lemma:subdivideedgestrong}, for every $e \in E(G)$, 
$$\phi(G-e) \leq t_g|E(H)| + \epsilon_g(H) - 1 \leq t_g(|E(G)| - 1) + (\epsilon_g(G) + t_g) - 1= t_g|E(G)| + \epsilon_g(G) - 1,$$
as desired.

So we may assume that there exists a multigraph $H'$ in $\F_{i,j-1}$ such that $G$ is obtained from $H'$ by taking the operation $\circ$.
By the induction hypothesis, $\phi(H'-f) \leq t_g|E(H')| + \epsilon_g(H') - 1$ for every $f \in E(H')$.
By Lemma \ref{lemma:op1edgestrong}, for every $e \in E(G)$, 
$$\phi(G-e) \leq t_g|E(H')| + \epsilon_g(H') \leq t_g(|E(G)| - 5) + (\epsilon_g(G) + 5t_g - 1) = t_g|E(G)| + \epsilon_g(G) - 1,$$
as desired.
\end{proof}

\begin{lemma} \label{vertex property F}
Let $g\in\{4,5\}$, and let $i,j,k$ be nonnegative integers with $j \leq i$ and $k \geq 1$.  
If $G\in\F_{i,j} \cup \F_{i,j,k}^g$, then for all $v\in V(G)$, $G$ admits a feedback vertex set $S$ of size at most $t_g|E(G)| + \epsilon_g(G)$ that contains $v$.
\end{lemma}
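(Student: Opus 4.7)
The plan is to split based on whether $G \in \F_{i,j}$ or $G \in \F_{i,j,k}^g$, and in each case reduce to Lemma \ref{edge property F}. The first case is easy: since every vertex of a member of $\F_{i,j}$ has degree at least two, there is an edge $e$ incident to $v$, and Lemma \ref{edge property F} yields a feedback vertex set $S'$ of $G-e$ of size at most $t_g|E(G)|+\epsilon_g(G)-1$; then $S'\cup\{v\}$ is a feedback vertex set of $G$ containing $v$ of size at most $t_g|E(G)|+\epsilon_g(G)$.

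For the main case $G \in \F_{i,j,k}^g$ with $k \geq 1$, I decompose $G$ into $H \in \F_{i,j}$, disjoint copies $L_1,\ldots,L_k$ of $L$ (if $g=4$) or $R$ (if $g=5$), and $k+1$ added edges. Since $G$ is 2-edge-connected, no added edge is a cut-edge, so the ``super-multigraph'' with one node per part and one edge per added edge is itself 2-edge-connected; having $k+1$ nodes and $k+1$ edges, it is a single cycle (with multi-edges when $k=1$). Hence each part is incident with exactly two added edges, at two of its degree-two vertices, and any cycle of $G$ through an added edge must traverse all $k+1$ added edges.

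I build $S = S_H \cup S_1 \cup \cdots \cup S_k$ by applying the first case of this lemma (applicable since $H \in \F_{i,j}$, $L \in \F_{3,1}$, and $R \in \F_{4,2}$) to each part: for every part $P$ and every prescribed $u \in V(P)$, this yields a feedback vertex set of $P$ of size at most $t_g|E(P)|+\epsilon_g(P)$ containing $u$, which is at most $\phi(L)=2$ for $P\cong L$ and at most $\phi(R)\leq 3$ for $P\cong R$. Let $P_v$ be the part containing $v$; I prescribe $v$ for $S_{P_v}$. Since $k\geq 1$, there is another part $P^*\neq P_v$; I prescribe one of the two added-edge endpoints of $P^*$ for $S_{P^*}$. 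On remaining parts, take any minimum feedback vertex set. Then $S$ contains $v$ and an added-edge endpoint, so it destroys all within-part cycles (via the respective $S_P$) as well as the unique ``super-cycle'' (killed by the added-edge endpoint). Substituting $|E(G)|=|E(H)|+9k+1$ (for $g=4$) or $|E(H)|+15k+1$ (for $g=5$), the desired inequality $|S|\leq t_g|E(G)|+\epsilon_g(G)$ reduces to $\epsilon_g(H)-\epsilon_g(G)\leq t_g$.

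The main obstacle is thus just this last arithmetic verification. Writing $X = 1 - j(5t_g-1) - it_g$, one has $\epsilon_g(H) = \max\{X,0\}$ and $\epsilon_g(G) = \max\{X-t_g,0\}$, so the difference $\epsilon_g(H)-\epsilon_g(G)$ equals $t_g$ when $X\geq t_g$, equals $X\leq t_g$ when $0\leq X<t_g$, and equals $0$ when $X<0$; in every case it is at most $t_g$, as required. The only minor subtlety is that if $v$ is itself an added-edge endpoint then the prescription on $P^*$ is logically redundant, but harmless: the construction works uniformly in all cases because $k\geq 1$ guarantees the existence of a part distinct from $P_v$.
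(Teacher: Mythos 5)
Your proposal is correct and follows essentially the same route as the paper: the $\F_{i,j}$ case via Lemma \ref{edge property F} (adding $v$ to a feedback vertex set of $G-e$), and the $\F^g_{i,j,k}$ case by taking per-part feedback vertex sets from the first case, prescribing $v$ in its own part and an attachment vertex in another part, with the identical $\epsilon_g(H)-\epsilon_g(G)\leq t_g$ arithmetic. The only difference is cosmetic: you spell out the quotient-cycle structure explaining why every cycle meeting an added edge traverses all added edges (and hence both attachment vertices of $P^*$), a point the paper leaves implicit.
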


\begin{proof}
We first assume that $G\in\F_{i,j}$.  
By Lemmas \ref{lemma:edgestrongimpliesvertexstrong} and \ref{edge property F}, $\phi(G-v) \leq t_g|E(G)| + \epsilon_g(G) - 1$.  
So $G-v$ admits a feedback vertex set $S$ of size at most $t_g|E(G)| + \epsilon_g(G) - 1$.  
Hence, $S\cup\{v\}$ is a feedback vertex set of $G$ of size at most $t_g|E(G)| + \epsilon_g(G)$.

Hence we may assume that $G \in \F_{i,j,k}^g$.
So there exist a multigraph $H_0\in\F_{i,j}$ and graphs $H_1,\dots, H_k$ each isomorphic to $L$ if $g=4$ and isomorphic to $R$ if $g=5$ such that $G$ is obtained from the disjoint union of $H_0,\dots, H_k$ by adding $k+1$ edges.
Since $k\geq 1$, there is some index $p$ such that $v\notin V(H_p)$.  
Choose $u\in V(H_p)$ such that $u$ is adjacent to a vertex in $V(G)-V(H_p)$. 
Recall that $L$ and $R$ are members of $\bigcup_{i',j'}\F_{i',j'}$.
It is easy to see that $\epsilon_4(L)=t_4$ and $\epsilon_5(R)=t_5$.
So for each $\ell$ with $0 \leq \ell \leq k$, $H_\ell$ admits a feedback vertex set $S_\ell$ of size at most $t_g\lvert E(H_\ell) \rvert + \epsilon_g(H_\ell)$ such that $u\in S_p$ and $v\in S_{\ell'}$ for some $\ell'$.  
Then $\bigcup_{\ell=0}^kS_\ell$ is a feedback vertex set of $G$ with size at most 
\begin{multline*}
  \sum_{\ell=0}^k (t_g\lvert E(H_\ell) \rvert+\epsilon_g(H_\ell)) = t_g(\lvert E(G) \rvert-k-1)+\epsilon_g(H_0)+kt_g\\
  =t_g\lvert E(G) \rvert+\max\{1+j(1-5t_g)-it_g,0\}-t_g \\
  \leq t_g \lvert E(G) \rvert + \max\{1-t_g+j(1-5t_g)-it_g,0\}
  =t_g\lvert E(G) \rvert + \epsilon_g(G).
\end{multline*}
This completes the proof.
\end{proof}

\section{Precise values for the error terms} \label{sec:precise value}

By the definition,
$$\epsilon_4(B) = \left\{
\begin{array}{l l}
\frac{9-2i-j}{9} & \mbox{if } B\in \F_{i,j} \mbox{ where } 4 \geq i\geq1, i \geq j\geq0, 2i + j \leq 9,\\
\frac{7-2i-j}{9}  & \mbox{if } B\in\F^4_{i,j,k} \mbox{ where } 3 \geq i\geq1, i \geq j \geq 0, k\geq0,\\
-\frac{2}{9} & \mbox{if } B = K_2,\\
0 & \mbox{otherwise.}
\end{array}\right.$$
$$\epsilon_5(B) = \left\{
\begin{array}{l l}
\frac{5-i}{5} & \mbox{if } B\in \F_{i,j} \mbox{ where } 4 \geq i\geq1, i \geq j\geq0,\\
\frac{4-i}{5}  & \mbox{if } B\in\F^5_{i,j,k} \mbox{ where } 3 \geq i\geq1, i \geq j \geq 0, k\geq0,\\
-\frac{1}{5} & \mbox{if } B = K_2,\\
0 & \mbox{otherwise.}
\end{array}\right.$$

The following lemmas describe the values of the functions $\rt,\rf$ for graphs with no two disjoint short cycles.

\begin{lemma} \label{r for girth 4}
If $G$ is a connected subcubic graph with no two disjoint triangles, then one of the following holds.
	\begin{enumerate}
		\item $G \in \{K_3,K_4,K_4^+\} \cup \F_{2,2}$.
		\item $\rt(G)=\frac{4}{9}$, and $G$ is obtained from a disjoint union of a copy of $K_4^+$ and at least one copy of $L$ by adding edges such that each new edge is a cut-edge.
		\item $\rt(G) = \frac{1}{3}$, and either 
			\begin{enumerate}
				\item $G$ is obtained from a disjoint union of a copy of $K_4^+$, a graph in $\F_{3,2} \cup \F_{4,0}$ and copies of $L$ by adding edges such that each new edge is a cut-edge, or 
				\item $G$ is obtained from a disjoint union of $K_3$ and at least one copy of $L$ by adding edges such that each new edge is a cut-edge.
			\end{enumerate}
		\item $\rt(G) = \frac{2}{9}$, and either 
			\begin{enumerate}
				\item some block of $G$ is isomorphic to $K_4^+$ or $K_3$, or 
				\item $G$ is obtained from a disjoint union of copies of graphs in $\F_{3,1}$ by adding edges such that each new edge is a cut-edge.
			\end{enumerate}
		\item $\rt(G) = \frac{1}{9}$, and either 
			\begin{enumerate}
				\item some block of $G$ is isomorphic to $K_4^+$ or $K_3$, or 
				\item $G$ can be obtained from a graph in $\F_{3,2} \cup \F_{4,0} \cup \bigcup_{k \geq 1}\F_{3,0,k}^4$ and copies of members of $\F_{3,1}$ by adding edges such that each new edge is a cut-edge.
			\end{enumerate}
		\item $\rt(G) \leq 0$.
	\end{enumerate}
\end{lemma}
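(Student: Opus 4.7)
The plan is to compute $r_4(G) = \sum_B \epsilon_4(B)$ block-by-block using the explicit formula from Section~\ref{sec:precise value} and enumerate the block configurations achieving each prescribed value. First I would prove the key structural consequence of the hypothesis: in a subcubic graph, two triangles in distinct blocks of $G$ can share at most one vertex (a cut-vertex), and two distinct triangles through a common vertex $v$ would force $\deg(v)\geq 4$; hence two triangles in distinct blocks are vertex-disjoint, contradicting the hypothesis. So at most one block of $G$ contains a triangle.

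Next I would read off, directly from the formula, the simple $2$-connected subcubic blocks $B$ with $\epsilon_4(B)>0$: $K_3$ gives $\tfrac13$, $K_4$ gives $\tfrac23$, $K_4^+$ gives $\tfrac49$, each member of $\F_{3,1}$ gives $\tfrac29$, each member of $\F_{2,2}$ gives $\tfrac13$, each member of $\F_{3,2}$ gives $\tfrac19$, the $4$-cycle $C_4 \in \F_{4,0}$ gives $\tfrac19$, and each simple member of $\F^4_{3,0,k}$ with $k\geq 1$ gives $\tfrac19$. Every other simple $2$-connected subcubic block contributes $0$, and $K_2$ contributes $-\tfrac29$. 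The cubic positive contributors, namely $K_4$ and all of $\F_{2,2}$, admit no attachments in a subcubic graph; if $G$ contains such a block, then $G$ equals it, which together with the standalone possibilities $G=K_3$ and $G=K_4^+$ gives Case~1. Moreover, in any subcubic graph two nontrivial blocks cannot share a cut-vertex (it would have degree $\geq 4$), so all connections between nontrivial blocks are realised by $K_2$ bridges.

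Writing $r_4(G) = p - \tfrac{2t}{9}$ where $p$ is the $\epsilon_4$-sum over nontrivial blocks and $t$ the number of $K_2$ bridges, I would then sweep the target values $\tfrac49,\tfrac13,\tfrac29,\tfrac19$. For each target I enumerate the multisets of positive-contributor blocks together with compatible bridge counts, pruning by (i) the spare-degree budget at each block (the $i-j$ degree-$2$ vertices bound the number of bridges at that block), (ii) the block-tree structure of $G$, and (iii) the triangle-uniqueness from the first step, which rules out configurations such as two $K_3$'s joined by bridges or $K_4^+$ glued to a triangle-containing member of $\F_{3,1}$. For example, a multi-block $G$ with $r_4(G)=\tfrac49$ must consist of one $K_4^+$ together with $k\geq 1$ triangle-free members of $\F_{3,1}$---which by Lemma~\ref{basic F} are all copies of $L$---joined by $k$ bridges, giving Case~2. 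The remaining targets yield Cases~3--5 by analogous direct checks, and Case~6 collects the residual.

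The main obstacle I anticipate is the bookkeeping required to make the enumeration exhaustive: verifying that no unlisted combination of blocks and bridges accidentally realises any target value, and that each spare-degree and triangle-uniqueness obstruction is correctly invoked. A subtle point is the role of the $\F^4_{3,0,k}$ blocks in Case~5(b): these are $2$-connected blocks built \emph{internally} from $K_3$ and $k$ copies of $L$ by adding $k+1$ edges, and they must be recognised as single indivisible $\epsilon_4 = \tfrac19$ contributors in the block decomposition of $G$, not as multi-block configurations of $K_3$ and $L$'s joined by bridges in $G$. Distinguishing these two situations is essential for a correct enumeration.
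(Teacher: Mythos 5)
Your approach is correct, but it is organized quite differently from the paper's. The paper proves Lemma \ref{r for girth 4} by induction on $\lvert V(G)\rvert$: it peels off a $K_4^+$ end-block, a leaf, or a nontrivial end-block $B'$ not isomorphic to $K_3$, uses the additivity $\rt(G)=\rt(B')-\frac{2}{9}+\rt(G-V(B'))$, and lets the induction hypothesis supply the structure of the smaller graph, so the no-two-disjoint-triangles hypothesis is only invoked locally (for instance, to conclude that $G-V(B)$ is triangle-free when $B\cong K_4^+$, or that a companion $\F_{3,1}$-block must be $L$). You instead argue globally: at most one block carries a triangle, the positive contributors are exactly the blocks in your table (which matches the explicit formula of Section \ref{sec:precise value} once the non-simple families $\F_{1,0}$, $\F_{2,0}$ and $\F^4_{2,0,k}$ are discarded), and $\rt(G)=p-\frac{2}{9}t$. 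The point that makes your sweep exhaustive, and which you should state explicitly rather than leave under ``block-tree structure,'' is the identity $t=b+s-1$, where $b$ is the number of nontrivial blocks and $s$ the number of vertices lying in no nontrivial block; together with the bound of $\frac{2}{9}$ per triangle-free companion block, each target value then forces $s=0$, no zero-contribution nontrivial blocks, $t=b-1$, and the companions to be copies of $L$ (plus at most one $\frac19$-block in the $\frac13$ and $\frac19$ cases), which is exactly the structure of Cases 2--5; the same inequality shows no configuration outside Case 1 exceeds $\frac49$, and parity kills the remaining candidate multisets. So your route closes: it trades the paper's one-end-block-at-a-time induction, which keeps every step local and needs no global counting, for a single closed-form enumeration; and you correctly flag the one genuinely delicate point, namely that members of $\F^4_{3,0,k}$ are single 2-connected $\frac19$-contributors rather than a $K_3$ and $L$'s joined by bridges of $G$ -- a distinction the paper's induction never has to confront because such a block is never split by removing an end-block.
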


\begin{proof}
We shall prove this lemma by induction on $\lvert V(G) \rvert$.
We are done if $G \in \{K_3,K_4,K_4^+\} \cup \F_{2,2}$.
This proves the induction basis and we may assume that $G$ has at least two vertices and $G \not \in \{K_3,K_4,K_4^+\} \cup \F_{2,2}$.

We first assume that $K_4^+$ is a block of $G$.
Since $G$ does not contain two disjoint triangles, there uniquely exists a block $B$ isomorphic to $K_4^+$, and $G-V(B)$ has no triangle.
Moreover, $B$ is an end-block.
Note that $G \neq B$, so $G-V(B)$ is nonempty and does not contain $K_4^+$ as an end-block.
In addition, $G-V(B) \not \in \{K_3,K_4,K_4^+\} \cup \F_{2,2}$ as it contains a vertex of degree at most two, so $\rt(G-V(B)) \leq \frac{2}{9}$ by the induction hypothesis.
Note that $\rt(G) = \rt(G-V(B)) - \frac{2}{9} + \rt(B)= \rt(G-V(B))+ \frac{2}{9}$.

If $\rt(G-V(B))=\frac{2}{9}$, then $\rt(G)=\frac{4}{9}$, and by the induction hypothesis, $G-V(B)$ is obtained from a disjoint union of copies of graphs in $\F_{3,1}$ by adding edges such that each new edge is a cut-edge.
Since $G-V(B)$ does not contain a triangle, the chosen member in $\F_{3,1}$ must be $L$, by Lemma \ref{basic F}.
So Statement 2 holds.
If $\rt(G-V(B))=\frac{1}{9}$, then $\rt(G)=\frac{1}{3}$, and by the induction hypothesis, since $G-V(B)$ is triangle-free, $G-V(B)$ can be obtained from a graph in $\F_{3,2} \cup \F_{4,0}$ and copies of $L$ by adding edges such that each new edge is a cut-edge.
Then Statement 3 holds.
If $\rt(G-V(B))=0$, then $\rt(G)=\frac{2}{9}$ and $K_4^+$ is an end-block of $G$, so Statement 4 holds.

Hence we may assume that $K_4^+$ is not a block of $G$.
If $G$ has a leaf $v$, then $\rt(G)=\rt(G-v)-\frac{2}{9} \leq \frac{1}{9}$ by the induction hypothesis, since $\rt(K_3) \leq \frac{1}{3}$ and $G-v$ does not contain $K_4^+$ as an end-block.
If $\rt(G)=\frac{1}{9}$, then $\rt(G-v)=\frac{1}{3}$, so $G-v$ (and hence $G$) contains $K_3$ as a block by the induction hypothesis, and Statement 5 holds.
If $\rt(G)<\frac{1}{9}$, then Statement 6 holds.

So we may assume that $G$ has minimum degree at least two.
Since $G \neq K_3$, $G$ has a nontrivial end-block $B'$ that is not isomorphic to $K_3$.
By the induction hypothesis, $\rt(G-V(B')) \leq \frac{1}{3}$ since $G-V(B')$ is not cubic and does not contain $K_4^+$.
Furthermore, since $B'$ is not a triangle, $0 \leq \rt(B') \leq \frac{2}{9}$. 
Hence $\rt(G) = \rt(B') - \frac{2}{9} + \rt(G-V(B'))$. 

Assume that $\rt(G-V(B'))=\frac{1}{3}$.
Then by the induction hypothesis, $G-V(B')$ is obtained from a disjoint union of $K_3$ and copies of $L$ by adding edges such that each added edge is a cut-edge.
In particular, $B'$ is triangle-free since $G$ has no two disjoint triangles.
If $\rt(B')=\frac{2}{9}$, then $B' \in \F_{3,1}$ by induction, but then $B'=L$ by Lemma \ref{basic F}, so $\rt(G) = \frac{1}{3}$ and $G$ is obtained from a disjoint union of $K_3$ and at least one copy of $L$ by adding edges such that each added edge is a cut-edge.
Hence Statement 3 holds when $\rt(B')=\frac{2}{9}$.
If $\rt(B')=\frac{1}{9}$, then $\rt(G)=\frac{2}{9}$ and $G$ contains $K_3$ as a block, so Statement 4 holds.
If $\rt(B')=0$, then $\rt(G) = \frac{1}{9}$ and $G$ contains $K_3$ as a block, so Statement 5 holds.

Therefore, we may assume $\rt(G-V(B')) \leq \frac{2}{9}$. 
We may assume Statement 6 does not hold, so $\frac{1}{3} \leq \rt(G-V(B'))+\rt(B') \leq \frac{4}{9}$.
If $\rt(G-V(B')) = \rt(B') = \frac{2}{9}$, then $\rt(G)=\frac{2}{9}$, and $B' \in \F_{3,1}$, and either $G-V(B')$ (and hence $G$) contains $K_3$ as a block, or $G-V(B')$ (and hence $G$) can be obtained from a disjoint union of copies of graphs in $\F_{3,1}$ by adding edges such that each new edge is a cut-edge.

So we may assume that $\rt(G-V(B'))+\rt(B')=\frac{1}{3}$, and hence $\rt(G)=\frac{1}{9}$.
We may further assume that no block of $G$ is isomorphic to $K_4^+$ or $K_3$, for otherwise Statement 5 holds.
If $\rt(G-V(B'))=\frac{2}{9}$ and $\rt(B')=\frac{1}{9}$, then by the induction hypothesis, $G-V(B')$ can be obtained from a disjoint union of copies of members of $\F_{3,1}$ by adding edges such that each new edge is a cut-edge, and $B' \in \F_{3,2} \cup \F_{4,0} \cup \bigcup_{k \geq 1}\F_{3,0,k}^4$, so Statement 5 holds.
If $\rt(G-V(B'))=\frac{1}{9}$ and $\rt(B')=\frac{2}{9}$, then Statement 5 also holds by the induction hypothesis.
This completes the proof.
\end{proof}

\begin{lemma} \label{r for girth 5}
If $G$ is a connected subcubic graph with no two disjoint cycles of length less than five, then one of the following holds.
	\begin{enumerate}
		\item $G=K_4$ or $G$ contains $K_4^+$ as a subgraph.
		\item $\rf(G) = \frac{2}{5}$ and $G$ is obtained from a disjoint union of a graph in $\bigcup_{j=0}^3 \F_{3,j}$ and copies of graphs in $\bigcup_{j=0}^3 \F_{4,j}$ by adding edges between different graphs such that each new edge is a cut-edge in $G$.
		\item $\rf(G) = \frac{1}{5}$ and either 
			\begin{enumerate}
				\item $G$ contains a block that is isomorphic to a member of $\bigcup_{j=0}^2 \F_{3,j}$, or 
				\item $G \in \F_{3,1,k}^5$ for some nonnegative integer $k$, or 
				\item $G$ is obtained from a disjoint union of copies of members of $\bigcup_{j=0}^4 \F_{4,j} \cup \bigcup_{k \geq 0}\F_{3,0,k}^5$ by adding edges between different graphs such that each new edge is a cut-edge in $G$.
			\end{enumerate}
		\item $\rf(G) \leq 0$.
	\end{enumerate}
\end{lemma}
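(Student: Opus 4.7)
The proof will proceed by induction on $\lvert V(G) \rvert$, closely following the strategy of Lemma~\ref{r for girth 4}. Statement~1 supplies both the base case and the relevant ``exceptional'' configurations: if $G=K_4$ or $G$ contains $K_4^+$ as a subgraph we are done. Hence we may assume $G\ne K_4$ and $G$ has no $K_4^+$-subgraph. Since $G$ is simple, connected, and subcubic, these assumptions combined with Lemma~\ref{basic F} and the hypothesis of no two disjoint short cycles imply that no nontrivial block of $G$ is isomorphic to $K_4$, $K_4^+$, or an element of $\F_{2,0}\cup\F_{2,2}$. Consequently, every nontrivial block $B$ of $G$ satisfies $\epsilon_5(B)\in\{0,\tfrac{1}{5},\tfrac{2}{5}\}$.

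The plan is then to distinguish two main cases. If $G$ has a leaf $v$ with neighbor $u$, then $uv$ is a $K_2$ block with $\epsilon_5=-\tfrac{1}{5}$, so $r_5(G)=r_5(G-v)-\tfrac{1}{5}$. The graph $G-v$ is a smaller connected subcubic graph with no two disjoint cycles of length less than five, is not $K_4$ (cubic graphs admit no leaf), and contains no $K_4^+$-subgraph (as $G$ does not); hence by induction it falls under one of Statements~2--4, which I translate to the corresponding statement for $G$ (Statement~2 becomes Statement~3(a); Statement~3 becomes Statement~4; Statement~4 is preserved). The subtle subcase is when $G-v$ satisfies Statement~2 with its $\F_{3,j}$-block having $j=3$; by Lemma~\ref{girth 5 i=3} such a block is then $R_1$ or $R_2$, each vertex of which has degree three, leaving no vertex available for either a cut-edge or the leaf $v$. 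This forces $j\le 2$ and so Statement~3(a) holds for $G$.

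Otherwise $G$ has minimum degree at least two. If $G$ is itself 2-connected then $G$ is its own unique nontrivial block, and we read off the classification directly by identifying the family $\F_{i,j}$ or $\F^5_{i,j,k}$ that contains $G$, appealing to Lemmas~\ref{basic F}, \ref{girth 5 i=4}, and \ref{girth 5 i=3}. Otherwise, pick a nontrivial end-block $B$ with cut-vertex $c$; the vertex-disjointness of nontrivial blocks in subcubic graphs forces $c$ to have degree exactly two in $B$ and its sole external edge $cw$ to form a $K_2$ block, which yields
$$r_5(G)=\epsilon_5(B)-\tfrac{1}{5}+r_5(G-V(B)),$$
extended additively over the connected components of $G-V(B)$. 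A case analysis on $\epsilon_5(B)\in\{0,\tfrac{1}{5},\tfrac{2}{5}\}$ combined with the inductive conclusion applied to each component of $G-V(B)$ yields the desired classification of $G$.

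The main obstacle will be the bookkeeping: each combination of end-block type and inductive outcome must be matched to the correct conclusion for $G$, and the exceptional cubic configurations (blocks in $\F_{3,3}$, $\F_{4,4}$, or cubic members of $\F^5_{i,j,k}$) must be ruled out in the appropriate subcases. Two organizing principles keep the analysis manageable: (i) a cubic block has no vertex of degree less than three and so cannot be incident to any cut-edge, forcing any connected graph with such a block to equal it; and (ii) the hypothesis of no two disjoint short cycles confines all such cycles to a single block, which once identified dictates the structural description that $G$ inherits.
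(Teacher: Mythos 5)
Your proposal is correct and follows essentially the same route as the paper: induction on $\lvert V(G)\rvert$ with the 2-connected case read off from the definition of $\epsilon_5$, leaf removal costing $\tfrac{1}{5}$, and the end-block decomposition $r_5(G)=\epsilon_5(B)-\tfrac{1}{5}+r_5(G-V(B))$, with cubic blocks and the no-two-disjoint-short-cycles hypothesis eliminating the exceptional combinations (the paper merely streamlines your final case analysis by choosing $B$ to be an end-block of girth at least five, which exists since distinct nontrivial end-blocks are disjoint). One harmless slip: a block of $G-v$ lying in $\F_{3,3}$ need not have girth five, so Lemma \ref{girth 5 i=3} does not identify it as $R_1$ or $R_2$ — but the only property you actually use is that every member of $\F_{3,3}$ is cubic, which holds regardless.
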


\begin{proof}
By Lemma \ref{basic F}, no block of $G$ is isomorphic to a graph in $\F_{2,2}$.
Suppose that $G$ is 2-connected.
Since $G \not \in \{K_4,K_4^+\}$, $\rf(G)=\ef(G) \leq \frac{2}{5}$.
And if $\rf(G)=\frac{2}{5}$, then $G \in \F_{3,j}$ for some $j \geq 0$, since $\F_{2,j,k}^5$ does not contain simple graphs for any nonnegative integers $j,k$; if $\rf(G)=\frac{1}{5}$, then $G \in \F_{3,j,k}^5 \cup \F_{4,j}$ for some nonnegative integers $j \leq 1,k$.
So this lemma holds if $G$ is 2-connected.

We shall prove this lemma by induction on $\lvert V(G) \rvert$.
The induction basis was proved by the 2-connected case.
We assume that this lemma holds for every connected subcubic graph on fewer vertices with no two disjoint cycles of length less than five.
We may also assume that $G \neq K_4$ and $G$ does not contain $K_4^+$ as a subgraph.

Assume that $G$ contains a leaf $v$.
Then by the induction hypothesis, $\rf(G) = \rf(G-v) - \frac{1}{5} \leq \frac{1}{5}$, and the equality holds only when $\rf(G-v)=\frac{2}{5}$ and hence $G$ contains a graph in $\bigcup_{j=0}^3 \F_{3,j}$ as a block.
Note that that block is not cubic and hence does not belong to $\F_{3,3}$.
So we may assume that $G$ has minimum degree at least two.

Since every member of $\bigcup_{j= 0}^2 (\F_{2,j} \cup \F_{3,j})$ contains a cycle of length less than five by Lemma \ref{basic F}, $G$ contains a nontrivial end-block $B$ not isomorphic to a graph in $\bigcup_{j=0}^2 (\F_{2,j} \cup \F_{3,j})$.
Since $G$ is not 2-connected, $B$ is not cubic and hence not in $\F_{3,3}$.
So $\rf(B) \leq \frac{1}{5}$ by the induction hypothesis.
Hence $\rf(G) = \rf(B)-\frac{1}{5}+\rf(G-V(B)) \leq \rf(G-V(B))$.
By the induction hypothesis, $\rf(G) \leq \frac{2}{5}$.

Furthermore, if $\rf(G)=\frac{2}{5}$, then $\rf(B)=\frac{1}{5}$ and $\rf(G-V(B))=\frac{2}{5}$.
Since $G-V(B)$ is not cubic, $G-V(B)$ is obtained from a disjoint union of a member in $\bigcup_{j=0}^2 \F_{3,j}$ and copies of members of $\bigcup_{j=0}^3 \F_{4,j}$ by adding edges between different graphs such that each new edge is a cut-edge.
Note that every graph in $\bigcup_{j=0}^2 \F_{3,j}$ has girth at most four, so $B$ has girth at least five.
Since $B$ is 2-connected with $\rf(B)=\frac{1}{5}$ and girth at least five and is not cubic, $B \in \bigcup_{j=2}^3 \F_{4,j}$.
Therefore, $G$ is obtained from a disjoint union of a member in $\bigcup_{j=0}^2 \F_{3,j}$ and copies of members of $\bigcup_{j=0}^3 \F_{4,j}$ by adding edges between different graphs such that each new edge is a cut-edge.

In addition, if $\rf(G) = \frac{1}{5}$, then either $\rf(G-V(B))=\frac{2}{5}$, or $\rf(G-V(B))=\rf(B)=\frac{1}{5}$. 
We are done if $G$ contains a block isomorphic to a member of $\bigcup_{j=0}^2\F_{3,j}$.
Hence $\rf(G-V(B))=\rf(B) = \frac{1}{5}$ by the induction hypothesis.
Note that every graph in $\bigcup_{k \geq 0}\F_{3,1,k}^5$ is cubic.
So by the induction hypothesis, $G-V(B)$ is obtained from a disjoint union of copies of members of $\bigcup_{j=0}^4\F_{4,j} \cup \bigcup_{k \geq 0}\F_{3,0,k}^5$ by adding edges between different graphs such that each new edge is a cut-edge in $G-V(B)$.
Since $B$ is 2-connected but not cubic, $B \in \bigcup_{j=0}^3 \F_{4,j} \cup \bigcup_{k \geq 0} \F_{3,0,k}^5$.
Therefore, $G$ is obtained from a disjoint union of copies of members of $\bigcup_{j=0}^4\F_{4,j} \cup \bigcup_{k \geq 0}\F_{3,0,k}^5$ by adding edges between different graphs such that each new edge is a cut-edge in $G$.
This proves the lemma.
\end{proof}

The following theorem is the explicit form of Theorem \ref{weak main}.
We prove it in this section, by assuming Theorem \ref{real main thm}, which will be proved in Section \ref{proof section}.

\begin{thm} \label{explicit weak main}
Let $G$ be a connected subcubic graph on $n$ vertices and $m$ edges.
	\begin{enumerate}
		\item If $G$ has girth at least four, then one of the following holds.
			\begin{enumerate}
				\item $G \in \{Q_3,V_8\}$ and $\phi(G) = \frac{2}{9}m+\frac{1}{3}$.
				\item $G$ is obtained from a disjoint union of copies of $L$ by adding edges such that each new edge is a cut-edge, and $\phi(G) = \frac{2}{9}m+\frac{2}{9}$.
				\item $G$ is obtained from a disjoint union of a member of $\F_{3,2} \cup \F_{4,0}$ and copies of $L$ by adding edges such that each new edge is a cut-edge, and $\phi(G) = \frac{2}{9}m+\frac{1}{9}$.
				\item $\phi(G) \leq \frac{2}{9}m$.
			\end{enumerate}
		\item If $G$ has girth at least five, then one of the following holds.
			\begin{enumerate}
				\item $G \in \{R_1,R_2\}$ and $\phi(G) = \frac{1}{5}m+\frac{2}{5}$.
				\item $G \in \F_{4,4}$ or can be obtained from a disjoint union of members of $\{R\} \cup \F_{4,3}$ with girth at least five by adding edges such that each new edge is a cut-edge, and $\phi(G) \leq \frac{1}{5}m+\frac{1}{5}$.
				\item $\phi(G) \leq \frac{1}{5}m$.
			\end{enumerate}
	\end{enumerate}
\end{thm}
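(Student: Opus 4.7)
The plan is to derive Theorem \ref{explicit weak main} by combining Theorem \ref{real main thm} with the classifications in Lemmas \ref{r for girth 4} and \ref{r for girth 5}. Since having girth at least $g$ forbids any cycle of length less than $g$, in particular no two disjoint such cycles, Theorem \ref{real main thm} applies and yields the upper bound $\phi(G)\leq t_g|E(G)|+r_g(G)$. The main task is to match each case of $r_g(G)>0$ in the relevant lemma against the girth hypothesis and identify it with one of the listed items in Theorem \ref{explicit weak main}.

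For part 1, I would step through the six cases of Lemma \ref{r for girth 4}, discarding any case whose description forces $K_3$, $K_4$, or $K_4^+$ as a subgraph or block. Case 1 then survives only as $\F_{2,2}$-members of girth four, which Lemma \ref{basic F}(6) forces to lie in $\{Q_3,V_8\}$, giving item (a); case 4(b) survives as cut-edge unions of $\F_{3,1}$-members, and Lemma \ref{basic F}(5) identifies $L$ as the unique such member of girth at least four, giving item (b); case 5(b) survives as a union of a member of $\F_{3,2}\cup\F_{4,0}$ (of girth $\geq 4$) with copies of $L$, once $\bigcup_{k\geq 1}\F_{3,0,k}^4$ is discarded because its central block is a triangle, giving item (c); case 6 gives item (d). For part 2 the analogous enumeration of Lemma \ref{r for girth 5}, combined with Lemmas \ref{girth 5 i=3} and \ref{girth 5 i=4}, reduces the central $\F_{3,j}$-block of case 2 to $R_1$ or $R_2$ (and being cubic they admit no attached cut-edges, so $G\in\{R_1,R_2\}$), and reduces case 3 to either a single $\F_{4,4}$ block or a cut-edge union of members of $\{R\}\cup\F_{4,3}$ (using cubicity of $\F_{4,4}$ to forbid any mixing through cut-edges, and discarding $\F_{3,0,k}^5$ and $\F_{3,1,k}^5$ because of triangles and $4$-cycles in their central blocks).

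The equality statements in (a), (b), (c) of part 1 and (a) of part 2 require matching lower bounds on $\phi(G)$. I would use the elementary cyclomatic inequality $\phi(B)\geq\lceil(|E(B)|-|V(B)|+1)/2\rceil$ for subcubic $B$, obtained by observing that deleting $k$ vertices from a subcubic graph removes at most $3k$ edges while a forest on $|V(B)|-k$ vertices has at most $|V(B)|-k-1$ edges. This yields $\phi(L)\geq 2$, $\phi(Q_3)\geq 3$, $\phi(V_8)\geq 3$, $\phi(R_1)\geq 4$, $\phi(R_2)\geq 4$, $\phi(H)\geq 3$ for every $H\in\F_{3,2}$, and $\phi(C_4)\geq 1$. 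Since the restriction of any feedback vertex set of $G$ to a block of $G$ is a feedback vertex set of that block, summing these block bounds over the nontrivial blocks of $G$ gives the required matching lower bound in each case, after a short arithmetic check that $\sum_B\phi(B)=t_g|E(G)|+c$ for the appropriate constant $c$.

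The main obstacle will be the bookkeeping: every branch of Lemmas \ref{r for girth 4} and \ref{r for girth 5} must be checked to confirm that it either collapses under the girth hypothesis or matches one of the items of Theorem \ref{explicit weak main} with the correct value of $r_g(G)$, and the arithmetic matching the block-wise cyclomatic lower bound against $t_g|E(G)|+r_g(G)$ must be verified for each building block. No individual step is conceptually deep, but missing a single subcase of the lemmas would leave a gap in the dichotomy.
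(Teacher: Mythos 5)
Your proposal is correct and follows essentially the same route as the paper: apply Theorem \ref{real main thm} and then filter the positive-$r_g$ cases of Lemmas \ref{r for girth 4} and \ref{r for girth 5} through the girth hypothesis (using Lemmas \ref{basic F}, \ref{girth 5 i=3} and \ref{girth 5 i=4}) to land in the listed exceptional structures. The only addition is your explicit block-wise cyclomatic lower bound $\phi(B)\geq\lceil(|E(B)|-|V(B)|+1)/2\rceil$ for the equality cases, which correctly fills in what the paper dismisses as a straightforward computation of $\phi$ for the exceptional graphs.
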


\begin{proof}
We first assume that $G$ has girth at least four.
It is straight forward to compute $\phi(G)$ if $G \in \{Q_3,V_8\}$ or can be obtained from a disjoint union of at most one graph in $\F_{3,2} \cup \F_{4,0}$ and copies of $L$ by adding edges such that each added edge is a cut-edge.
So we assume that $G$ is not of those cases, and we shall prove that $\phi(G) \leq \frac{2}{9}m$.
Since $G$ has girth at least four, $G$ does not contain $K_3, K_4$ or $K_4^+$ as a subgraph.
By Lemma \ref{basic F}, $G \not \in \F_{2,2}$, otherwise $G \in \{Q_3,V_8\}$.
In addition, if $G$ contains a member of $\F_{3,1}$ as a subgraph, then this member must be $L$ by Lemma \ref{basic F}.
Therefore, by Theorem \ref{real main thm} and Lemma \ref{r for girth 4}, $\phi(G) \leq \frac{2}{9}m$.
This proves the first statement.

Now we assume that $G$ has girth at least five and prove the second statement.
By Lemma \ref{basic F}, $G$ does not contain a subgraph isomorphic to $K_4, K_4^+$ or members of $\bigcup_{j=0}^2 (\F_{3,j} \cup \bigcup_{k \geq 0}\F_{3,j,k}^5) \cup \F_{4,0} \cup \F_{4,1}$.
Since every graph in $\F_{3,3}$ is cubic, by Lemma \ref{r for girth 5}, if $\rf(G) = \frac{2}{5}$, then $G \in \F_{3,3}$.
But Lemma \ref{girth 5 i=3} implies that $G \in \{R_1,R_2\}$.
Hence, by Theorem \ref{real main thm}, if $\phi(G) \geq \frac{1}{5}m+\frac{2}{5}$, then $G \in \{R_1,R_2\}$.
And it is straight forward to see $\phi(G) = \frac{m}{5}+\frac{2}{5}$ if $G \in \{R_1,R_2\}$.
If $\rf(G) = \frac{1}{5}$, then by Lemma \ref{r for girth 5}, $G$ can be obtained from a disjoint union of copies of members of $\bigcup_{j=2}^4 \F_{4,j}$ by adding edges such that each added edge is a cut-edge.
By Lemma \ref{girth 5 i=4}, $R$ is the only graph with girth at least five in $\F_{4,2}$.
Note that every graph in $\F_{4,4}$ is cubic, so if $G$ contains $\F_{4,4}$ as a subgraph, then $G \in \F_{4,4}$.
Hence, together with Theorem \ref{real main thm}, if $\phi(G) =\frac{1}{5}m+\frac{1}{5}$, then $G \in \F_{4,4}$ or is obtained from a disjoint union of members of $\{R\} \cup \F_{4,3}$ by adding edges such that each added edge is a cut-edge.
This proves the second statement.
\end{proof}

\section{A characterization of the dodecahedron} \label{sec:dodecahedron}

We shall prove a characterization of the dodecahedron in this section.
It is of independent interest and will be used in Section \ref{proof section} for the proof of Theorem \ref{real main thm}.

\begin{thm} \label{characterization dodecahedron}
A cubic graph $G$ is a disjoint union of copies of dodecahedra if and only if it has girth five and for every vertex $v$ of $G$ and for every pair of neighbors $a,b$ of $v$, $G-v$ contains two disjoint 5-cycles, where one contains $a$ and the other contains $b$.
\end{thm}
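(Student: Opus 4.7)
The ``only if'' direction follows from the explicit structure of the dodecahedron. With girth $5$ and $12$ pentagonal faces, any vertex $v$ with neighbors $a,b,c$ lies on three pentagonal faces at $v$; at each neighbor $x\in\{a,b,c\}$, exactly two of the three faces through $x$ contain $v$, leaving a unique face $F_x$ through $x$ avoiding $v$. Using the $1+3+6+6+3+1$ BFS profile of the dodecahedron around $v$, a direct check shows that $F_a$, $F_b$, $F_c$ are pairwise disjoint 5-cycles in $G-v$, which gives the required property for every pair of neighbors of $v$.

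For the ``if'' direction, let $G$ be a connected cubic graph of girth $5$ satisfying the condition; we aim to show $G$ is the dodecahedron. Fix $v$ with neighbors $a,b,c$, and let $a_1,a_2,b_1,b_2,c_1,c_2$ be the remaining neighbors of $a,b,c$. Girth $5$ forces these ten vertices to be distinct. Any 5-cycle through $a$ in $G-v$ must use both edges $aa_1,aa_2$ and so has the form $(a,a_1,p,q,a_2)$ with $p\sim q$; likewise through $b$ and $c$. The crux is a uniqueness claim: for each neighbor $a$ of $v$, there is a unique 5-cycle through $a$ in $G-v$. Suppose two distinct such cycles $(a,a_1,p,q,a_2)$ and $(a,a_1,p',q',a_2)$ exist; girth $5$ rules out $p=p'$ (otherwise $a_2qpq'$ closes to a 4-cycle) and symmetrically $q=q'$, so $N(a_1)=\{a,p,p'\}$ and $N(a_2)=\{a,q,q'\}$ are fully determined. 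Applying the hypothesis at $a$ to the pair $(a_1,a_2)$ yields two disjoint 5-cycles in $G-a$, one through $a_1$ and one through $a_2$, each using both of its center's remaining edges. A careful case analysis on the possible extensions shows that every possibility produces either a forbidden 4-cycle or a vertex shared by the two cycles, contradicting disjointness; hence the 5-cycle through $a$ in $G-v$ is unique, and by the hypothesis the three unique cycles $F(v,a)$, $F(v,b)$, $F(v,c)$ are pairwise disjoint.

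With the 5-cycle through each neighbor of $v$ uniquely determined, we reconstruct the dodecahedron layer by layer. Applying the hypothesis at $a$ to the pair $(v,a_1)$ produces a 5-cycle through $v$ in $G-a$; since $v$ has only the two remaining neighbors $b,c$ in $G-a$, this cycle must take the form $(v,b,b_i,c_j,c)$ with $b_i\sim c_j$, exhibiting one pentagonal face at $v$. The analogous applications at $b$ and $c$ identify all three faces at $v$ and pin down the three layer-2 adjacencies. Iterating the same analysis at layer-2 vertices, while exploiting the uniqueness lemma at each new stage, determines all layer-3 adjacencies and forces the BFS profile from $v$ to be exactly $1+3+6+6+3+1$, so $|V(G)|=20$ and $G$ coincides with the dodecahedron. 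The main obstacle is carrying out the uniqueness case analysis cleanly; once it is in hand, the layer-by-layer bootstrap is routine, provided one also verifies that it terminates at 20 vertices rather than continuing to grow.
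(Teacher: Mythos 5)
Your ``if'' direction hinges entirely on the uniqueness claim (for each vertex $v$ and each neighbour $a$, there is a unique $5$-cycle through $a$ in $G-v$; equivalently, no two $5$-cycles share the two edges $aa_1,aa_2$). The claim is true --- it is precisely the first claim in the paper's proof, stated there as ``no two $5$-cycles share two edges'' --- but the derivation you propose does not establish it. After deducing $N(a_1)=\{a,p,p'\}$ and $N(a_2)=\{a,q,q'\}$, you apply the hypothesis at $a$ to the pair $(a_1,a_2)$ and assert that every way of completing the two disjoint $5$-cycles forces a $4$-cycle or a shared vertex. That is not so: the cycle through $a_1$ in $G-a$ must be of the form $a_1pxyp'$ and the cycle through $a_2$ of the form $a_2qzwq'$, where $x,y,z,w$ are the third neighbours of $p,p',q,q'$ (the alternatives $x=q$, $y=q'$, $z=p$, $w=q'$ are excluded only by disjointness); taking $x,y,z,w$ to be four new, distinct vertices gives a configuration that is disjoint and creates no cycle of length less than five, so this single application of the hypothesis yields no contradiction at all. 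The missing work is exactly where the paper's argument begins in earnest: one must bring in the third neighbour of $a$ (your vertex $v$) and apply the hypothesis again at $a_1$, to the pairs $(p,a)$ and $(p',a)$. The resulting $5$-cycles through $a$ in $G-a_1$ must contain the path $v\,a\,a_2$, and chasing how they can close up (using girth $5$ to forbid $v$ being adjacent to $p$ or $p'$) forces $v$ to be adjacent to both $z$ and $w$, which are adjacent to each other, producing a triangle. Without this step your key lemma is unproved.

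A secondary issue: even granting uniqueness, your reconstruction phase is only a sketch. The paper derives from the uniqueness claim that \emph{every edge lies in exactly two $5$-cycles}, and then grows the graph from one $5$-cycle $C$ in three explicit layers ($v_i$, $v_i'$, $w_i$, $w_i'$), verifying at each stage that the new vertices are distinct and that cubicity closes the component at $20$ vertices; your ``routine bootstrap, provided one also verifies that it terminates at $20$ vertices'' defers exactly the part that needs an argument. Finally, note that the statement concerns disjoint unions of dodecahedra, so after identifying one component you should observe (as the paper does) that the starting edge was arbitrary and every component of a cubic graph contains an edge.
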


\begin{proof}
It is clear that if $G$ is a disjoint union of copies of dodecahedra, then for every vertex $v$ of $G$ and for every pair of neighbors $a,b$ of $v$, $G-v$ contains two disjoint 5-cycles, where one contains $a$ and the other contains $b$.
It suffices to prove the converse statement.
So we assume that for every vertex $v$ of $G$ and for every pair of neighbors $a,b$ of $v$, $G-v$ contains two disjoint 5-cycles, where one contains $a$ and the other contains $b$.

We first claim that $G$ does not have two 5-cycles sharing two edges.
Since $G$ has girth at least five, no two 5-cycles can share more than two edges, and the shared edges must have a common end.  
Suppose to the contrary that there are two 5-cycles in $G$ sharing sharing two edges, say $av$ and $vb$.  
Let $a_1$ and $a_2$ be the neighbors of $a$ other than $v$, and let $b_1$ and $b_2$ be the neighbors of $b$ other than $v$ such that $a_1$ is adjacent to $b_1$ and $a_2$ is adjacent to $b_2$.  
%We can assume without loss of generality that $a_1$ is adjacent to $b_1$ and $a_2$ is adjacent to $b_2$.

By assumption, $G-v$ contains two disjoint 5-cycles $D_1$ and $D_2$, where $D_1$ contains $a_1aa_2$ and $D_2$ contains $b_1bb_2$.  
Let $D_1 = aa_1 a'_1 a'_2 a_2 a$ and $D_2 = bb_1b'_1 b'_2 b_2b$.
Let $c$ be the neighbor of $v$ other than $a$ and $b$.  
By assumption, $G- a$ contains disjoint 5-cycles $D'_1$ and $D'_2$ such that $D'_1$ contains $a_1'a_1b_1$ and $D_2'$ contains $cvb$.  
Since $D_1'$ and $D_2'$ are disjoint, $b_2\in V(D'_2)$.  
Since $G$ has girth at least 5, $c$ is not adjacent to $a_2$.  
Therefore $c$ is adjacent to $b'_2$.

Again by assumption, $G- a$ contains disjoint 5-cycles $D''_1$ and $D''_2$ such that $D_1''$ contains $a_2'a_2b_2$ and $D_2''$ contains $cvb$.
Therefore $c,v,b,b_1\in V(D''_2)$.  
Since $G$ has girth at least 5, $c$ is not adjacent to $a_1$, so $c$ is adjacent to $b'_1$.
But then $cb'_1b'_2c$ is a triangle in $G$, a contradiction.  
This proves that $G$ does not contain two 5-cycles sharing two edges.

Now we claim that every edge of $G$ is contained in precisely two 5-cycles.
Let $xy\in E(G)$.  
Let $x_1$ and $x_2$ be the neighbors of $x$ other than $y$.  
By assumption, there is a 5-cycle in $G-x_1$, and hence in $G$, containing $xy$.  
Suppose that $G$ contains three 5-cycles $Z_1,Z_2,Z_3$ containing $xy$.
Without loss of generality, we may assume that $Z_1$ contains $x_1xy$.
Since no two 5-cycles can share two edges, $Z_2$ and $Z_3$ do not contain $x_1x$.
So $Z_2$ and $Z_3$ contain both $xx_2$ and $xy$, a contradiction.  
This proves that every edge of $G$ is contained in precisely two 5-cycles.

Since every edge is contained in precisely two 5-cycles and $G$ has an edge, $G$ contains a 5-cycle, say $C = v_0v_1v_2v_3v_4v_0$.  
For integers $i$ with $0 \leq i \leq 4$, let $v'_i$ be the neighbor $v_i$ not in $C$.  
Since $G$ has girth at least five, $v'_i\neq v'_j$ for $i\neq j$.  
For each $i$ with $0 \leq i \leq 4$, $v_iv_{i+1}$ is contained in precisely two 5-cycles (where indices are computed modulo 5), so $v_iv_{i+1}$ is contained in a 5-cycle $U_i$ such that $E(C)\cap E(U_i) = \{v_iv_{i+1}\}$.  
Therefore $v'_i, v'_{i+1}\in V(U_i)$, and there is a vertex $w_i\notin\{v_i:0\leq i\leq 4\}$ adjacent to both $v'_i$ and $v'_{i+1}$.  
Suppose $w_i = w_j$ for $i\neq j$.  
Since $G$ is cubic, $|i - j| = 1$.  
But then $|E(U_i)\cap E(U_j)| = 2$, a contradiction.
So $w_i$ for $0 \leq i \leq 4$ are pairwise distinct.

Let $w'_i$ be the neighbor of $w_i$ not in $U_i$.  
Clearly $w'_i$ is distinct from $v_j$ and $v'_j$ for all $j$.  
Since $v'_iw_i$ is contained in precisely two 5-cycles, there exists a 5-cycle $U_i'$ such that $E(U_i)\cap E(U'_i) = \{v'_iw_i\}$.  
Therefore $U'_i$ contains $w_{i-1}v_i'w_iw_i'$, so $w'_{i-1}$ is adjacent to $w'_i$.  
So $w'_{i-1}\neq w'_{i+1}$, otherwise $\lvert E(U'_i)\cap E(U'_{i+1}) \rvert \geq 2$, a contradiction.  
Since $G$ is cubic, the component of $G$ containing $C$ has no more vertices, so the component of $G$ containing $C$ is the dodecahedron.

Note that $C$ is a 5-cycle that contains an arbitrary edge of $G$ we picked at the beginning.
Since $G$ is cubic, every component of $G$ has an edge, so every component of $G$ is a dodecahedron.
This completes the proof.
\end{proof}

\section{Proof of Theorem \ref{real main thm}} \label{proof section}

In this section we prove Theorem \ref{real main thm}.
We shall prove Theorem \ref{real main thm} by induction on $\lvert V(G) \rvert$.
It is easy to see that Theorem \ref{real main thm} holds for graphs with at most two vertices.

For $g \in \{4,5\}$, we say that a graph $G$ is {\it $g$-minimal} if $G$ is a subcubic graph that does not contain two disjoint cycles of length less than $g$ and $\phi(G) > t_g|E(G)| + r_g(G)$, but $\phi(H) \leq t_g \lvert E(H) \rvert + r_g(H)$ for every subcubic graph $H$ with fewer vertices than $G$ but not containing two disjoint cycles of length less than $g$.
In other words, $g$-minimal graphs are the minimum counterexamples of Theorem \ref{real main thm}.
Note that every $g$-minimal graph contains at least three vertices and does not belong to $\bigcup_{i \geq 1,i \geq j \geq 0}\F_{i,j}\cup\bigcup_{i\geq 1, j,k\geq0} \F^g_{i,j,k}$ by Lemma \ref{vertex property F}.

In the rest of this section, $g \in \{4,5\}$ and $G$ is a $g$-minimal graph.  
We denote $\lvert V(G) \rvert$ and $\lvert E(G) \rvert$ by $n$ and $m$, respectively.

\begin{lemma} \label{2-conn}
$G$ is 2-connected.
\end{lemma}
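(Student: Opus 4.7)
\medskip

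\noindent\textbf{Proof proposal.} The strategy is to suppose for contradiction that $G$ is not 2-connected, split $G$ at a cut-vertex (or between components), and apply the minimality of $G$ to each piece, using that both sides of the target inequality behave additively across such a split.

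First I would handle the easy case in which $G$ is disconnected. Let $G_1, G_2, \dots, G_k$ be the components of $G$, with $k \geq 2$. Each $G_i$ is a proper subcubic induced subgraph of $G$, and since the $G_i$ are pairwise disjoint, no $G_i$ contains two disjoint cycles of length less than $g$. By the minimality hypothesis on $G$, $\phi(G_i) \leq t_g|E(G_i)| + r_g(G_i)$ for each $i$. Now $\phi$ is additive over components, $|E(G)| = \sum_i |E(G_i)|$, and the blocks of $G$ are exactly the union of the blocks of the $G_i$'s, so $r_g(G) = \sum_i r_g(G_i)$. Summing the $k$ inequalities yields $\phi(G) \leq t_g|E(G)| + r_g(G)$, contradicting $g$-minimality.

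Next I would handle the case in which $G$ is connected but has a cut-vertex $v$. Let $C_1, \dots, C_k$ be the components of $G - v$, with $k \geq 2$, and set $G_i = G[V(C_i) \cup \{v\}]$. Each $G_i$ has fewer vertices than $G$ (because the other components are nonempty), is subcubic, and inherits from $G$ the property of not containing two disjoint cycles of length less than $g$. By minimality, $\phi(G_i) \leq t_g|E(G_i)| + r_g(G_i)$. Since every cycle of $G$ lies entirely in some $G_i$ (a cycle cannot cross a cut-vertex), the union of any feedback vertex sets of the $G_i$'s is a feedback vertex set of $G$; hence $\phi(G) \leq \sum_i \phi(G_i)$. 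The edges are partitioned as $|E(G)| = \sum_i |E(G_i)|$, and because every block of $G$ lies on one side of $v$, the blocks of $G$ are precisely the disjoint union of the blocks of the $G_i$, so $r_g(G) = \sum_i r_g(G_i)$. Summing gives $\phi(G) \leq t_g|E(G)| + r_g(G)$, again a contradiction.

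The two cases together rule out the failure of 2-connectivity. I do not anticipate a real obstacle here: the only things to check carefully are the block-decomposition identity $r_g(G) = \sum_i r_g(G_i)$ (which is immediate from the definition of $r_g$ as a block sum once one observes that blocks do not cross cut-vertices) and the inequality $\phi(G) \leq \sum_i \phi(G_i)$ in the cut-vertex case (which follows because we are free to count $v$ at most once in the union of feedback vertex sets, and in fact we do not even need that saving).
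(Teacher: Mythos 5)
Your proof is correct, but it takes a different decomposition than the paper. The paper first disposes of leaves (a leaf $v$ is deleted, and the $K_2$ block it creates contributes $\epsilon_g(K_2)=-t_g$, exactly paying for the deleted edge), and then splits off a nontrivial end-block $B$ along the cut-edge attaching it, which exists because $G$ is subcubic; the accounting again hinges on the $-t_g$ charge of the $K_2$ block to absorb the edge lost between the two pieces. You instead split at a cut-vertex $v$ into the lobes $G_i=G[V(C_i)\cup\{v\}]$, where the edge sets partition exactly, every cycle lies in a single lobe, and the blocks of $G$ are precisely the blocks of the $G_i$'s, so $r_g$ is exactly additive and no $K_2$ bookkeeping or subcubic structure is needed; this also makes the leaf case just a degenerate lobe $G_i=K_2$ rather than a separate argument. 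Two small points you should make explicit: the claim that blocks distribute over the lobes needs the one-line justification that a $2$-connected subgraph (or a cycle) of $G$ cannot meet two components of $G-v$, since any two paths between them pass through $v$; and to conclude $2$-connectivity from ``connected with no cut-vertex'' you should invoke the fact, recorded just before this lemma, that a $g$-minimal graph has at least three vertices. With those remarks added, your argument is a clean and slightly more elementary alternative to the paper's, while the paper's cut-edge version has the advantage of rehearsing the $\epsilon_g(K_2)=-t_g$ accounting that recurs in its later splitter lemmas.
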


\begin{proof}
It is easy to see that every $g$-minimal graph is connected, so $G$ is connected.

Suppose that $G$ has a leaf $v$.
By the minimality of $G$, $G-v$ admits a feedback vertex set $S$ of size at most $t_g(m-1)+ r_g(G-v) =t_gm + r_g(G)$.
But $S$ is a feedback vertex set of $G$, a contradiction.  Therefore $G$ has minimum degree at least two.

If $G$ is not 2-connected, then $G$ contains a nontrivial end-block $B$.
Since $G$ is subcubic, there exists a cut-edge of $G$ incident with $B$.
By the minimality of $G$, $B$ admits a feedback vertex set $S_B$ of size at most
$t_g|E(B)| + \epsilon_g(B)$, and $G-V(B)$ admits a feedback vertex set $S_B'$ of size at most $t_g\left(m-\lvert E(B) \rvert - 1\right) + r_g(G-V(B))$.
But $S_B\cup S'_B$ is a feedback vertex set of $G$ of size at most 
$t_gm + r_g(G-V(B)) + \epsilon_g(B) - t_g = t_gm + r_g(G)$, 
a contradiction.  This proves the lemma.
\end{proof}

Recall that $G$ does not belong to $\bigcup_{i \geq 1,i \geq j \geq 0}\F_{i,j}\cup\bigcup_{i\geq 1, j,k\geq0} \F^g_{i,j,k}$ by Lemma \ref{vertex property F}.
By Lemma \ref{2-conn}, $r_g(G) = \epsilon_g(G) = 0$.

An {\em edge-cut} of $G$ is an ordered partition $[A,B]$ of $V(G)$.
The {\em order} of $[A,B]$ is the number of edges with one end in $A$ and one end in $B$.
$[A,B]$ is a {\em splitter} if it has order at most two with $\lvert A \rvert \geq 2$ such that $G[A]$ has girth at least $g$ and $G[B]$ is 2-connected.

Since $G$ is 2-connected, the order of any splitter $[A,B]$ is two.
Since $A$ and $B$ contain at least two vertices, the edges between $A$ and $B$ do not share ends.
In the rest of this section, if $[A,B]$ is a splitter, then we let $u_A$ and $v_A$ be the ends of the edges between $A$ and $B$ in $A$, and we let $u_B$ and $v_B$ be the ends of the edges between $A$ and $B$ in $B$; let $n_A = \lvert A \rvert$, $n_B=\lvert B \rvert$, $m_A = \lvert E(G[A]) \rvert$ and $m_B = \lvert E(G[B]) \rvert$; let $S_A$ and $S_B$ be minimum feedback vertex sets of $G[A]$ and $G[B]$, respectively.
Observe that $S_A$ and $S_B$ have size at most $t_gm_A+ r_g(G[A])$ and $t_gm_B+r_g(G[B])$, respectively, since $G$ is $g$-minimal.
Note that $G[A]$ and $G[B]$ are connected and each contains at least two vertices of degree at most two, since $G$ is 2-connected.

\begin{lemma}\label{no splitter with B in F}
If $[A,B]$ is a splitter, then no feedback vertex set of $G[B]$ with size at most $t_gm_B+r_g(G[B])$ contains at least one of $u_B$ or $v_B$.
In particular,	$G[B]\notin\bigcup_{i \geq 1,j \geq 0}\F_{i,j}\cup\bigcup_{4\geq i\geq 1,j \geq 0,k\geq1} \F^g_{i,j,k}$, and $G[B]$ cannot be obtained from a member of $\bigcup_{4 \geq i\geq 1, j \geq 0, k \geq 1}\F_{i,j,k}^g$ by subdividing an edge.
\end{lemma}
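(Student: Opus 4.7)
The strategy is to prove the main statement by contradiction. Suppose some feedback vertex set $S_B$ of $G[B]$ with $|S_B| \leq t_g m_B + r_g(G[B])$ contains $u_B$ (the case $v_B \in S_B$ is symmetric). I aim to combine $S_B$ with a small feedback vertex set of $G[A]$ to produce a feedback vertex set of $G$ of size at most $t_g m = t_g m + r_g(G)$ (using $r_g(G)=0$ from the remark following Lemma \ref{2-conn}), contradicting the $g$-minimality of $G$.

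Since $G[A]$ has girth at least $g$, it has no two disjoint cycles of length less than $g$, and since $|B|\geq 2$ (because $G[B]$ is 2-connected) we have $|V(G[A])| < n$. The $g$-minimality of $G$ therefore furnishes a feedback vertex set $S_A$ of $G[A]$ of size at most $t_g m_A + r_g(G[A])$. I would then verify that $S = S_A \cup S_B$ is a feedback vertex set of $G$ by the usual cut-set argument: any cycle of $G$ either lies in $G[A]$ (and is killed by $S_A$), lies in $G[B]$ (and is killed by $S_B$), or traverses both edges of the order-two cut $[A,B]$ and hence contains $u_B\in S_B$. This gives $|S|\leq t_g(m-2)+r_g(G[A])+r_g(G[B])$, so the desired contradiction reduces to the inequality $r_g(G[A])+r_g(G[B])\leq 2t_g$.

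The hard part will be establishing this last inequality; this is where the splitter hypothesis and the $g$-minimality of $G$ must be used substantively. It amounts to a block-by-block analysis: $G[A]$ has girth at least $g$, which rules out the high-$\epsilon_g$ blocks (such as triangles and $K_4^+$), while $G[B]$ is a single 2-connected block whose family membership under $\F_{i,j}$ or $\F^g_{i,j,k}$ controls $\epsilon_g(G[B])$ through the explicit formulas of Section \ref{sec:precise value}. Because $G$ contains no two disjoint cycles of length less than $g$ and is $g$-minimal, the collection of blocks of $G[A]$ with positive $\epsilon_g$ should be limited enough to make the inequality hold.

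The ``in particular'' clause then follows quickly from the main statement via Lemma \ref{vertex property F}. If $G[B]\in\F_{i,j}$ or $G[B]\in\F^g_{i,j,k}$ with $k\geq 1$, then Lemma \ref{vertex property F} produces a feedback vertex set of $G[B]$ containing $u_B$ of size at most $t_g m_B + \epsilon_g(G[B]) = t_g m_B + r_g(G[B])$, contradicting the main statement. If $G[B]$ is obtained from some $H\in\F^g_{i,j,k}$ ($k\geq 1$) by subdividing a single edge, I would combine Lemma \ref{vertex property F} applied to $H$ with Lemmas \ref{lemma:subdivideedgestrong} and \ref{lemma:edgestrongimpliesvertexstrong} to produce a feedback vertex set of $G[B]$ containing $u_B$ or $v_B$ of the required size---handling separately the subcases where the chosen endpoint lies in $V(H)$ versus being the new subdivision vertex---again contradicting the main statement.
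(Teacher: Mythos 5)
Your setup for the main statement matches the paper's: take $S_A$ from $g$-minimality, note that $S_A\cup S_B$ is a feedback vertex set of $G$ of size at most $t_g(m-2)+r_g(G[A])+r_g(G[B])$, and (since $r_g(G)=0$) look for a contradiction. But there is a genuine gap at the step you call ``the hard part'': the inequality $r_g(G[A])+r_g(G[B])\leq 2t_g$ cannot be established by bounding the two sides separately with a block-by-block analysis, because the separate maxima already exceed $2t_g$. Girth at least $g$ only gives $r_g(G[A])\leq t_g$ (Lemmas \ref{r for girth 4} and \ref{r for girth 5}), while the splitter definition imposes no girth condition on $G[B]$: a $2$-connected $G[B]$ avoiding $K_4^+$ can still be $K_3$, with $r_4(G[B])=\frac{1}{3}>t_4$, or lie in $\F_{3,0}\cup\F_{3,1}$, with $r_5(G[B])=\frac{2}{5}=2t_5$. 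Thus $\frac{2}{9}+\frac{1}{3}>\frac{4}{9}$ and $\frac{1}{5}+\frac{2}{5}>\frac{2}{5}$, and no restriction on the positive-$\epsilon_g$ blocks of $G[A]$ alone closes this. The missing idea is what the paper actually does: assuming $r_g(G[A])+r_g(G[B])>2t_g$, it uses Lemmas \ref{r for girth 4} and \ref{r for girth 5} to pin down the \emph{equality structures} --- $G[A]$ is a union of copies of $L$ (resp.\ $R$) joined by cut-edges and $G[B]=K_3$ (resp.\ $G[B]\in\F_{3,0}\cup\F_{3,1}$) --- and then recognizes that $G$ itself belongs to $\F^4_{3,0,k}$ (resp.\ $\F^5_{3,0,k}\cup\F^5_{3,1,k}$), which is impossible since a $g$-minimal graph lies outside all families $\F^g_{i,j,k}$ by Lemma \ref{vertex property F}. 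In other words, the inequality is never proved directly; its failure is converted into a structural identification of $G$ with an exceptional family, and this is precisely where $g$-minimality enters. Your sketch gestures at minimality but does not contain this identification, and without it the argument does not close.

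Your treatment of the ``in particular'' clause is essentially the paper's and is fine: Lemma \ref{vertex property F} directly yields a forbidden feedback vertex set through $u_B$ when $G[B]\in\F_{i,j}\cup\F^g_{i,j,k}$, and in the subdivision case one applies it to the pre-subdivision graph $H$ at whichever of $u_B,v_B$ lies in $V(H)$, noting that a feedback vertex set of $H$ is one of $G[B]$ and $\lvert E(H)\rvert=m_B-1$; the appeal to Lemmas \ref{lemma:subdivideedgestrong} and \ref{lemma:edgestrongimpliesvertexstrong} is not needed there.
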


\begin{proof}
Suppose that $S_B$ contains at least one of $u_B$ or $v_B$.
Hence, $S_A \cup S_B$ is a feedback vertex set with size at most $t_g(m_A+m_B)+r_g(G[A])+r_g(G[B]) = t_gm+r_g(G[A]) + r_g(G[B])-2t_g$.
Since $r_g(G) = 0$, $r_g(G[A]) + r_g(G[B]) -2t_g >0$.

Suppose that $g=4$.
Since $G[A]$ has girth at least four, by Lemma \ref{r for girth 4}, $\rt(G[A]) \leq \frac{2}{9}$, and the equality holds only when $G[A]$ is obtained from a disjoint union of $L$ by adding edges between different copies of $L$ such that each added edge is a cut-edge.
By Lemma \ref{r for girth 4}, since $G[B]$ is 2-connected and does not contain $K_4^+$ as a subgraph, $\rt(G[B]) \leq \frac{1}{3}$, and the equality holds only when $G[B]=K_3$.
Since $\rt(G[A])+\rt(G[B])-2t_g > 0$, we have $\rt(G[A])=\frac{2}{9}$ and $\rt(G[B])=\frac{1}{3}$.
Therefore, $G$ is obtained from a disjoint union of a $K_3$ and at least one copy of $L$ by adding edges such that each added edge is between different graphs.
In other words, $G \in \F_{3,0,k}^4$ for some positive integer $k$, a contradiction.
Hence $g=5$.

Since $G[A]$ has firth at least five, $\rf(G[A]) \leq \frac{1}{5}$ by Lemma \ref{r for girth 5}.
Note that $G[A]$ contains at least two vertices of degree at most two.
Together with Lemma \ref{basic F}, $\rf(G[A])=\frac{1}{5}$ only when $G[A]$ is obtained from a disjoint union of copies of members of $\F_{4,2}$ with girth at lest five by adding edges such that each added edge is a cut-edge.
Recall that $R$ is the only member of $\F_{4,2}$ with girth at least five by Lemma \ref{girth 5 i=4}.
Since $G[B]$ does not contain $K_4^+$ as a subgraph but contains at least two vertices of degree two, $\rf(G[B]) \leq \frac{2}{5}$, and the equality holds only when $G[V(B)] \in \F_{3,0} \cup \F_{3,1}$.
Since $\rt(G[A])+\rt(G[B])-2t_g > 0$, we have $\rt(G[A])=\frac{1}{5}$ and $\rt(G[B])=\frac{2}{5}$.
Therefore, $G \in \F_{3,0,k}^5 \cup \F_{3,1,k}^5$ for some positive integer $k$, a contradiction.
This proves that no feedback vertex set of $G[B]$ with size at most $t_gm_B-r_g(G[B])$ contains at least one of $u_B$ or $v_B$.

In particular, by Lemma \ref{vertex property F}, $G[B] \not \in \bigcup_{i=1}^4\bigcup_{j \geq 0}\F_{i,j}\cup\bigcup_{4\geq i\geq 1,j \geq 0,k\geq1} \F^g_{i,j,k}$.
If $G[B]$ is obtained from a member of $\bigcup_{4\geq i\geq 1,j \geq 0,k\geq1} \F^g_{i,j,k}$ by subdividing an edge, then at least one of $u_B,v_B$ is not the new vertex obtained from subdividing an edge, so $G[B]$ admits a feedback vertex set of size at most $t_g(m_B-1)+r_g(G[B])$ containing at least one of $u_B,v_B$ by Lemma \ref{vertex property F}, a contradiction.
\end{proof}

A splitter is {\em tight} if there exists no splitter $[A',B']$ such that $B' \subset B$.

\begin{lemma} \label{splitter 1}
If $[A,B]$ is a tight splitter, then either $G[A]=K_2$, or $\phi(G[A]) \leq t_gm_A -2t_g$.
\end{lemma}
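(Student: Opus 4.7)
The plan is to combine the inductive hypothesis of Theorem \ref{real main thm} with the tight choice of the splitter $[A,B]$. Since $G[B]$ is 2-connected and the cut $[A,B]$ has order two with $u_B \neq v_B$, we have $|B| \geq 3$, so $|V(G[A])| < |V(G)|$. Because $G[A]$ has girth at least $g$, it trivially has no two disjoint cycles of length less than $g$, so the $g$-minimality of $G$ applied to $G[A]$ yields $\phi(G[A]) \leq t_g m_A + r_g(G[A])$. It therefore suffices to show $r_g(G[A]) \leq -2t_g$ whenever $G[A] \neq K_2$.

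Recall that $r_g(G[A]) = \sum_{B'} \epsilon_g(B')$ over the blocks $B'$ of $G[A]$, with $\epsilon_g(K_2) = -t_g$ and $\epsilon_g(B') \geq 0$ otherwise, being strictly positive only when $B'$ belongs to one of the exceptional families from Section \ref{sec:special families}. Thus the desired inequality reduces to showing that $G[A]$ has enough $K_2$-blocks (cut-edges) to dominate any positive contributions from 2-connected exceptional blocks. Note that by assumption $G[A] \neq K_2$, so $G[A]$ is connected with at least three vertices, and hence has at least two edges with which to supply cut-edge contributions.

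The main obstacle, and heart of the argument, is verifying this block-structural condition via the tightness of $[A, B]$. My plan is a case analysis by contradiction: if $r_g(G[A]) > -2t_g$, then $G[A]$ either has too few cut-edges or contains a 2-connected block $H$ from an exceptional family with substantial positive $\epsilon_g$. In each case I would construct an alternate splitter $[A', B']$ with $B' \subsetneq B$ --- for instance by absorbing a vertex of $B$ incident to a cut-edge into $A$ (without violating the girth of $G[A']$ or the 2-connectivity of $G[B']$), or by swapping a block of $G[A]$ with a suitable sub-structure inside $G[B]$. This would contradict tightness. The case analysis relies on the classifications in Lemmas \ref{basic F}, \ref{girth 5 i=4}, and \ref{girth 5 i=3}, together with the explicit values of $\epsilon_g$ computed in Section \ref{sec:precise value}.
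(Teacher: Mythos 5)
There is a genuine gap at the very first reduction. After correctly noting that $g$-minimality gives $\phi(G[A])\leq t_gm_A+r_g(G[A])$, you declare that it suffices to prove $r_g(G[A])\leq -2t_g$ and plan to force this by a tightness argument. That sufficient condition is in general unattainable: nothing in the definition of a (tight) splitter prevents $G[A]$ from being $2$-connected, or from having a single cut-edge, and in those cases $r_g(G[A])=\epsilon_g(G[A])\geq 0$ or $r_g(G[A])\geq -t_g$, so the inequality you need simply fails. A concrete illustration: if $G[A]$ is $R_1$ minus an edge with $u_A,v_A$ its two degree-two vertices, then $G[A]$ is $2$-connected, lies in no family with positive $\epsilon_5$, so $r_5(G[A])=0$, yet the lemma's bound $\phi(G[A])\leq t_5m_A-2t_5$ is still what must be proved; minimality applied to $G[A]$ alone is short of it by exactly $2t_g$. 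Your fallback --- deriving a contradiction with tightness by ``absorbing a vertex of $B$ into $A$'' or ``swapping blocks'' --- cannot close this, because tightness only forbids splitters $[A',B']$ with $B'\subset B$; it constrains the $B$-side, says nothing about the block structure of $G[A]$, and the required girth/2-connectivity conditions for your modified partitions will generally fail. (Indeed, the paper's own proof of this lemma never invokes tightness; that hypothesis is only exploited later, in Lemma~\ref{splitter 2}.)

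The missing idea is to exploit that $G$ itself is a counterexample, i.e.\ $\phi(G)>t_gm$ with $r_g(G)=0$, rather than applying minimality to $G[A]$ directly. One first bounds $\lvert S_B\rvert\leq t_gm_B$ via Lemma~\ref{no splitter with B in F}, then applies minimality to an auxiliary graph on the $A$-side: $G[A]-\{u_A,v_A\}$ when $u_Av_A\in E(G)$, and $G[A]+u_Av_A$ otherwise. Gluing the resulting feedback vertex set to $S_B$ (plus $u_A$ in the first case) gives a feedback vertex set of $G$, and since its size must exceed $t_gm$, one deduces $r_g$ of the auxiliary graph is large ($>5t_g-1$, resp.\ $>t_g$). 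Lemmas~\ref{r for girth 4} and \ref{r for girth 5} then force the auxiliary graph into the exceptional families, and the \emph{strong} properties of those families --- Lemma~\ref{vertex property F} (a feedback vertex set through a prescribed vertex) in the adjacent case, and Lemma~\ref{edge property F} (a feedback vertex set of $F-e$ with an extra saving of $1$) in the non-adjacent case --- are what convert this structural information into $\phi(G[A])\leq t_gm_A-2t_g$. Without this mechanism your case analysis has no handle on $r_g(G[A])$, so the proposed proof does not go through.
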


\begin{proof}
Since $G[B]$ is 2-connected and contains at least two vertices of degree two, $r_g(G[B]) \geq 0$ by Lemma \ref{no splitter with B in F}.
So $\lvert S_B \rvert \leq t_gm_B$.
We assume that $G[A] \neq K_2$, and we shall prove that $\lvert S_A \rvert \leq t_gm_A - 2t_g$.

We first assume that $u_A$ is adjacent to $v_A$.  
Since $G[A] \neq K_2$, $u_A$ and $v_A$ can not both have degree two in $G$.
So both $u_A$ and $v_A$ have degree three in $G$, otherwise one of $u_A,v_A$ is a cut-vertex in $G$.  
Let $u'_A$ be the neighbor of $u_A$ in $G[A]$ other than $v_A$.  
Let $A' = A - \{u_A,v_A\}$.  
Since $G$ is $g$-minimal, $G[A']$ admits a feedback vertex set $S_{A'}$ with size at most $t_g(m_A - 3) + r_g(G[A'])$.  
Since $S_{A'} \cup \{u_A\} \cup S_B$ is a feedback vertex of $G$ of size at most $t_gm_A-3t_g+r_g(G[A'])+1+t_gm_B = t_gm-5t_g+1+r_g(G[A'])$.
Since $\phi(G)>t_gm$, we know $r_g(G[A'])>5t_g-1$.

If $g=4$, then $r_g(G[A']) \geq \frac{2}{9}$.
Since $G[A']$ has girth at least four and contains at least two vertices of degree two, $G[A']$ does not contain a triangle or $K_4^+$.
By Lemma \ref{r for girth 4}, $r_4(G[A'])=\frac{2}{9}$ and $G[A']$ is obtained from a disjoint union of copies of graphs in $\F_{3,1}$ by adding edges such that each new edge is a cut-edge.
By Lemma \ref{vertex property F}, $S_{A'}$ can be chosen such that $u_A' \in S_{A'}$.
Then $S_{A'}$ is a feedback vertex of $G[A]$ with size at most $\frac{2}{9}(m_A-3)+\frac{2}{9} = \frac{2}{9}m_A-\frac{4}{9} = t_gm_A-2t_g$, as claimed

If $g=5$, then $r_{t_g}(G[A']) \geq \frac{1}{5}$.
Since $G[A']$ has girth at least five and contains at least two vertices of degree two, by Lemma \ref{r for girth 5}, $\rf(G[A'])=\frac{1}{5}$ and $G[A']$ is obtained from a disjoint union of copies of graphs in $\F_{4,2}$ by adding edges such that each new edge is a cut-edge in $G[A']$.
By Lemma \ref{vertex property F}, $S_{A'}$ can be chosen such that $u_A' \in S_{A'}$.
Then $S_{A'}$ is a feedback vertex of $G[A]$ with size at most $\frac{1}{5}(m_A-3)+\frac{1}{5} = \frac{1}{5}m_A-\frac{2}{5} = t_gm_A-2t_g$, as claimed.

Hence we may assume that $u_A$ is not adjacent to $v_A$.
Let $G'' = G[A]+u_Av_A$. 
If $G[A]$ is 2-connected, then $G''$ is 2-connected.
If $G[A]$ is not 2-connected, then it has precisely two end-blocks, each containing $u_A$ and $v_A$, respectively, since $G$ is 2-connected.
So $G''$ is 2-connected in either case.  
$G''$ admits a feedback vertex set $S''$ with size at most $t_g(m_A + 1) +r_g(G'')$.  
Then $S'' \cup S_B$ is a feedback vertex set in $G$ with size at most $t_g(m-1)+r_g(G'')$ vertices.
So $r_g(G'')>t_g$.

If $g=4$, then $\rt(G'') \geq \frac{1}{3}$.
Since $G''$ is 2-connected and $G''-u_Av_A$ has girth at least four, $G'' \in \{K_3,K_4^+\} \cup \F_{2,2} \subseteq \F_{3,0} \cup \F_{3,1} \cup \F_{2,2}$ by Lemma \ref{r for girth 4}.
Since $G[A]=G''-u_Av_A$, by Lemma \ref{edge property F}, $\phi(G[A]) \leq t_4m_A+r_4(G[A])-1 \leq t_4m_A-2t_4$, where the last inequality follows from Lemma \ref{r for girth 4}.

If $g=5$, then $\rf(G'') \geq \frac{2}{5}$.
Since $G''-u_Av_A$ has girth at least five and $G''$ is 2-connected, by Lemma \ref{r for girth 5}, $\rf(G'')=\frac{2}{5}$ and $G'' \in \bigcup_{j=0}^3 \F_{3,j}$.
Since $G[A]=G''-u_Av_A$, by Lemma \ref{edge property F}, $\phi(G[A]) \leq t_5m_A+r_5(G[A])-1 \leq t_5m_A-2t_5$, where the last inequality follows from Lemma \ref{r for girth 5}.

Therefore, $\lvert S_A \rvert =\phi(G[A]) \leq  t_gm_A - 2t_g$ vertices.
\end{proof}

\begin{lemma} \label{splitter 2}
If $[A,B]$ is a tight splitter, then $G[A]=K_2$.
\end{lemma}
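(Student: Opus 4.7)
The plan is to derive a contradiction from the assumption that $G[A]\neq K_2$. Suppose this fails; by Lemma \ref{splitter 1}, $G[A]$ admits a feedback vertex set $S_A$ of size at most $t_gm_A-2t_g$, and by Lemma \ref{no splitter with B in F} the graph $G[B]$ lies outside every family that contributes a positive value to $\epsilon_g$, so $r_g(G[B])=0$. Set $G^{\ast}:=G-(A\setminus\{u_A,v_A\})+u_Av_A$, where the edge $u_Av_A$ is added only if it is not already present. Then $G^{\ast}$ is $2$-connected and subcubic with $|E(G^{\ast})|\le m-m_A+1$.

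The $g$-minimality of $G$, applied to $G^{\ast}$, will yield a feedback vertex set $T$ of $G^{\ast}$ with $|T|\le t_g(m-m_A+1)+r_g(G^{\ast})$, and I claim $S_A\cup T$ is then a feedback vertex set of $G$. Indeed, every cycle of $G$ either lies inside $G[A]$ (killed by $S_A$), lies inside $G[B]\subseteq G^{\ast}$ (killed by $T$), or uses both bridge edges, in which case it corresponds to a cycle of $G^{\ast}$ through the edge $u_Av_A$ that is also killed by $T$. The combined size satisfies
\[
|S_A\cup T|\ \le\ t_gm-t_g+r_g(G^{\ast}),
\]
which contradicts $\phi(G)>t_gm=t_gm+r_g(G)$ as soon as $r_g(G^{\ast})\le t_g$.

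Two technical points demand care. First, the minimality of $G$ applies to $G^{\ast}$ only if $G^{\ast}$ has no two disjoint cycles of length less than $g$; any potentially offending short cycle through the new edge $u_Av_A$ would come from a short $u_B$-$v_B$ path $P$ in $G[B]$, and I would use $P$ together with $\{u_A,v_A\}$ to build a splitter $[A',B']$ with $B'\subsetneq B$, contradicting the tightness of $[A,B]$. Second, if $r_g(G^{\ast})>t_g$, then Lemmas \ref{r for girth 4} and \ref{r for girth 5} force $G^{\ast}$ into an explicit short list of $2$-connected graphs built from $K_3$, $K_4^{+}$, or cubic members of $\F_{i,i}$; in each such case I would either apply Lemma \ref{vertex property F} to arrange $u_A\in T$ or $v_A\in T$ and then replace $S_A$ with a smaller feedback vertex set of $G[A]-u_A$ or $G[A]-v_A$ (so that the excess $r_g(G^{\ast})-t_g$ is compensated by the saving in $|S_A|$), or else exhibit yet another splitter with $B$-side properly contained in $B$ and again invoke tightness.

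The hardest step is the second one: the graphs with $r_g(G^{\ast})>t_g$ are precisely the exceptional blocks which make the error function positive, so the remaining argument needs a careful case-by-case analysis in which Lemma \ref{vertex property F} and the tightness of $[A,B]$ are used in tandem to rule each exceptional $G^{\ast}$ out.
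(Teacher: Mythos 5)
Your reduction to $G^{\ast}=G[B\cup\{u_A,v_A\}]+u_Av_A$ is a reasonable variant of the paper's move (the paper instead deletes $u_B$ and combines $S_A\cup S_H\cup\{u_B\}$), and for $g=4$ your counting does close, since a $2$-connected simple graph with two adjacent degree-two vertices cannot lie in any family with $\epsilon_4>t_4$. The genuine gap is your second ``technical point.'' For $g=5$ the case $r_5(G^{\ast})>t_5$ is not a flexible list: it forces $\epsilon_5(G^{\ast})=\frac{2}{5}$, hence $G^{\ast}\in\F_{3,1}$, namely $K_4$ with one edge subdivided twice, i.e.\ $G[B]$ is $K_4$ minus an edge with $u_B,v_B$ its two degree-two vertices. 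In that configuration both of your proposed patches fail. There is no splitter $[A',B']$ with $B'\subsetneq B$: the only $2$-connected proper induced subgraphs of $K_4$ minus an edge on at least three vertices are its two triangles, and each gives an edge-cut of order three, so tightness yields nothing. And the prescribed-vertex trick does not balance: with $u_A\in T$ (via Lemma \ref{vertex property F}) you would need $\phi(G[A]-u_A)\leq t_5m_A-3t_5$, while minimality only gives $\phi(G[A]-u_A)\leq t_5(m_A-2)+r_5(G[A]-u_A)$, which is short by $t_5$ unless $r_5(G[A]-u_A)<0$, which you cannot guarantee. In fact no recombination of feedback vertex sets can prove $\phi(G)\leq t_5m$ here, because graphs realizing exactly this configuration include genuine members of $\F_{4,j+1}$, which attain $\phi=t_5|E|+\frac{1}{5}$; the only way out is structural. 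That is what the paper does: if $u_A$ is adjacent to $v_A$ it exhibits the splitter $[A-\{u_A,v_A\},B\cup\{u_A,v_A\}]$ whose $B$-side lies in $\F_{3,1}$, contradicting Lemma \ref{no splitter with B in F}; otherwise it shows $G[A]+u_Av_A\in\F_{3,j}$ and reconstructs $G$ from it by a subdivision and the operation $\circ$, so $G\in\F_{4,j+1}$, contradicting that a $g$-minimal graph lies outside these families. This identification of $G$ as a member of the special families is the missing idea in your plan.

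Your first technical point also needs real work as stated. For $g=5$, $G^{\ast}$ acquires the new $4$-cycle $u_Au_Bv_Bv_A$ exactly when $u_Bv_B\in E(G)$, and if $G[B]-\{u_B,v_B\}$ contains a cycle of length at most four you cannot invoke minimality. Your proposed splitter $[A\cup\{u_B,v_B\},\,B\setminus\{u_B,v_B\}]$ is not automatically a splitter, because the definition requires the new $B$-side to be $2$-connected, and $G[B]-\{u_B,v_B\}$ need not be; one has to pass to a suitable block containing the offending short cycle and then verify that the resulting edge-cut still has order at most two (in a subcubic graph this is not automatic), before tightness can be applied. So both flagged steps, and especially the exceptional case $G[B]=K_4$ minus an edge, are genuine gaps rather than routine verifications.
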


\begin{proof}
Suppose that $G[A] \neq K_2$.
By Lemma \ref{no splitter with B in F}, $\lvert S_B \rvert \leq t_gm_B$.
By Lemma \ref{splitter 1}, $\lvert S_A \rvert \leq t_gm_A - 2t_g$.

Let $H = G[B]-u_B$.  
Since $G$ is $g$-minimal, $H$ admits a feedback vertex set $S_H$ of size at most $t_g(m_B - 2) + r_g(H)$.  
Then $S_A\cup S_H\cup\{u_B\}$ is a feedback vertex set of $G$ of size at most
$t_gm_A - 2t_g + t_g(m_B - 2) + r_g(H) + 1 = t_gm - 6t_g + 1 + r_g(H)$.
Therefore $r_g(H) > 6t_g - 1>0$.  

Suppose $g=4$.
Then $r_g(H) > \frac{1}{3}$.
By Lemma \ref{r for girth 4}, either $H \in \{K_3,K_4,K_4^+\} \cup \F_{2,2}$, or $H$ contains $K_4^+$ as a subgraph.
Note that $H \neq K_3$ as $r_4(K_3)=\frac{1}{3}$, so either $H$ contains at most one vertex of degree at most two, or $K_4^+$ is an end-block of $H$ and $H \neq K_4^+$.
But $G[A]$ is 2-connected, every end-block of $H$ contains at least one vertex of degree at most two, and if $H$ is 2-connected, then $H$ contains at least two vertices of degree at most two, a contradiction.

Hence $g=5$ and $r_g(H) > \frac{1}{5}$.  
Note that every end-block of $H$ contains at least one vertex of degree at most two in $H$, since $G[A]$ is 2-connected.
By Lemma \ref{r for girth 5}, there exists a nonnegative integer $k$ such that $H$ is obtained from a disjoint union of a member in $\F_{3,0} \cup \F_{3,1}$ and $k$ copies of members of $\bigcup_{j=0}^2\F_{4,j}$ by adding edges between different graphs such that each new edge is a cut-edge.
But every member of $\F_{3,0} \cup \F_{3,1}$ contains a cycle of length less than five by Lemma \ref{basic F}, so the copies of the chosen members of $\bigcup_{j=0}^2 \F_{4,j}$ must have girth at least five. 
Since $[A,B]$ is tight, $k=0$.

Since $k=0$, $H \in \F_{3,0} \cup \F_{3,1}$.
But if $H \in \F_{3,1}$, then $u_B$ is the only vertex of $G[B]$ of degree at most two in $G[B]$, contradicting that $G$ is 2-connected.
Therefore, $H=K_3$ and hence $G[B]$ is the graph obtained from $K_4$ by deleting an edge.

Suppose that $u_A$ is adjacent to $v_A$.
Then $\lvert A - \{u_A,v_A\} \rvert \geq 2$, otherwise $G[A]$ is a triangle, a contradiction.
So $[A-\{u_A,v_A\},B \cup \{u_A,v_A\}]$ is a splitter.
But $G[B \cup \{u_A,v_A\}] \in \F_{3,1}$, contradicting Lemma \ref{no splitter with B in F}.
Hence $u_A$ is not adjacent to $v_A$.

Let $G'=G[A]+u_Av_A$.
Since $G[A]$ has girth at least five, $G'$ does not contain two disjoint cycles of length less than five.
So $G'$ admits a feedback vertex set $S'$ of size at most $\frac{1}{5}(m_A+1)+\rf(G')$.
Since $S' \cup S_B$ is a feedback vertex set of $G$ of size at most $\frac{1}{5}(m_A+1)+\rf(G')+\frac{1}{5}m_B = \frac{1}{5}m+\rf(G')-\frac{1}{5}$.
So $\rf(G') \geq \frac{2}{5}$.
Since $G$ is 2-connected, $G'$ is 2-connected.
Since $G'$ does not contain two disjoint cycle of length less than five, $G' \in \F_{3,j}$ for some integer $j$ with $0 \leq j \leq 3$.
Let $G''$ be the graph obtained from $G'$ by subdividing $u_Av_A$, and let $w$ be the new vertex.
Then $G = G'' \circ(wv_A,wv_A,w)$.
Hence $G \in \F_{4,j+1}$, a contradiction.
This proves the lemma.
\end{proof}

\begin{lemma} \label{lemma:contracting}
If $u$ is a vertex of degree two in $G$, then $G$ contains a cycle $C$ with length $g$ and a cycle $C'$ with length less than $g$ such that $u \in V(C)$ and $V(C) \cap V(C') = \emptyset$.
\end{lemma}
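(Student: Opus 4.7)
The plan is to argue by contradiction: suppose $u$ is a degree-two vertex of $G$ with neighbors $a, b$ for which no such pair $(C, C')$ exists. The argument splits into two cases according to whether $ab \in E(G)$.

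In the case $ab \notin E(G)$, I consider the graph $G' = (G-u) + ab$ obtained by suppressing $u$, a simple subcubic graph with $n-1$ vertices and $m-1$ edges. A lifting argument shows that $G'$ has no two disjoint cycles of length less than $g$: any such disjoint pair in $G'$ passes to either two disjoint short cycles of $G$ (contradicting the theorem's hypothesis) or to a $g$-cycle of $G$ through $u$ disjoint from a shorter cycle (contradicting the standing assumption on $u$). Since every feedback vertex set of $G'$ is also a feedback vertex set of $G$ (cycles through $u$ correspond to cycles through the edge $ab$ in $G'$), $\phi(G) \leq \phi(G')$; combining this with $g$-minimality applied to $G'$ yields $r_g(G') > t_g$, whence $r_g(G') \geq \frac{1}{3}$ when $g=4$ and $r_g(G') \geq \frac{2}{5}$ when $g=5$. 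Because $G'$ is 2-connected (being obtained from $G$ by contracting the edge $ua$, which preserves 2-connectivity as $G$ has at least four vertices), $r_g(G') = \epsilon_g(G')$; inspecting the formula for $\epsilon_g$, together with the observation that the relevant simple members of $\F^g_{i,j,k}$ are empty, forces $G' \in \F_{i,j}$ for some pair $(i,j)$ with $i$ small. Since $G$ is a subdivision of $G'$, the recursive definition of $\F$ places $G \in \F_{i+1,j}$, contradicting the fact (obtained from Lemma \ref{vertex property F}) that $G$ belongs to no $\F$-family.

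In the case $ab \in E(G)$, the triangle $T = aub$ forces every short cycle of $G$ to contain $a$, $u$, or $b$, and therefore to meet every $g$-cycle through $u$; thus the desired pair $(C, C')$ cannot exist, and I seek a direct contradiction to this configuration. Invoking 2-connectivity of $G$ and consequently the absence of cut-edges, I first rule out the case when $a$ or $b$ has degree two (which would make the unique edge leaving $T$ a cut-edge), and the case when $a$ and $b$ share a common external neighbor $a'=b'$ (which forces either $G$ to be the diamond graph on four vertices -- easily seen not to be $g$-minimal -- or again a cut-edge). Hence both $a, b$ have degree three with distinct external neighbors $a', b'$, and $[\{u,a,b\}, V(G)\setminus\{u,a,b\}]$ is an edge-cut of order two. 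Writing $H = G[V(G) \setminus \{u,a,b\}]$, $H$ is connected (since $G - a$ is connected by 2-connectivity and $u, b$ become leaves there), and $\{a\} \cup S_H$ is a feedback vertex set of $G$ for every feedback vertex set $S_H$ of $H$. Applying $g$-minimality to $H$ then gives $\phi(G) \leq 1 + t_g(m-5) + r_g(H)$, forcing $r_g(H) > 5t_g - 1$; the structural characterizations in Lemmas \ref{r for girth 4} and \ref{r for girth 5} restrict $H$ to a short list of configurations, and a case analysis shows each forces $G$ itself into some $\F$-family, again contradicting $g$-minimality.

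The main obstacle is the second case: suppressing $u$ through the triangle would produce a multi-edge, so the clean reduction from Case 1 is unavailable, and one must instead analyse $H$ together with the several sub-cases allowed by the structure theorems for graphs with large $r_g$. In Case 1, the routine but still technical point is verifying that $\epsilon_g(G') > t_g$, together with simplicity and 2-connectivity, pins $G'$ to $\F_{i,j}$ with small $i$ -- which requires showing that each $\F^g_{i,j,k}$ whose $\epsilon_g$ value crosses the relevant threshold is either empty or contains only non-simple graphs.
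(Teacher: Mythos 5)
Your proposal is correct, and its first case is essentially the paper's argument: suppress $u$, observe that feedback vertex sets of the suppressed graph $G'$ lift back to $G$, use $g$-minimality to force $r_g(G')>t_g$, and use 2-connectivity of $G'$ together with the $\epsilon_g$ table (plus the fact that the only candidates among the $\F^g_{i,j,k}$, namely those with $(i,j)=(2,0)$, consist of non-simple multigraphs) to place $G'\in\F_{i,j}$, hence $G\in\F_{i+1,j}$, a contradiction; the surviving disjoint pair of short cycles in $G'$ then lifts to the required $C$ and $C'$. Where you genuinely diverge is the case $ab\in E(G)$: the paper kills it in one sentence, since $[\,V(G)\setminus\{u,a,b\},\{u,a,b\}\,]$ is then a splitter whose second side is $K_3\in\F_{3,0}$, contradicting Lemma \ref{no splitter with B in F}, already available at this point. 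Your substitute --- delete $\{u,a,b\}$, note every cycle meeting the triangle passes through $a$, so $\{a\}\cup S_H$ is a feedback vertex set and $r_g(H)>5t_g-1$ --- does close, but the case analysis you leave implicit is where the work lies: for $g=4$, Lemma \ref{r for girth 4} together with triangle-freeness of $H$ leaves only the case that $H$ is copies of $L$ joined by cut-edges, and the degree-two bookkeeping ($a',b'$ and the cut-edge ends must occupy the two degree-two vertices of each copy, while 2-connectivity of $G$ forces a path structure with $a'$ and $b'$ in the end copies) yields $G\in\F^4_{3,0,k}$; for $g=5$, Lemma \ref{r for girth 5} plus the girth condition leaves copies of $R$ and girth-five members of $\F_{4,3}\cup\F_{4,4}$, which the same degree count eliminates except for $R$, yielding $G\in\F^5_{3,0,k}$, again contradicting minimality via Lemma \ref{vertex property F}. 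So your route trades the one-line splitter argument for a hands-on analysis of the kind the paper reserves for Lemma \ref{lemma:notriangle}; it avoids the splitter machinery in this lemma but is noticeably longer. One small repair: contracting an edge of a 2-connected graph on at least four vertices does not in general preserve 2-connectivity (the diamond is a counterexample); here it does because $u$ has degree two, so $G-\{u,a\}=(G-a)-u$ is connected, which is exactly what is needed.
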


\begin{proof}
Let $G'$ be the graph obtained from $G$ by contracting an edge incident to $u$.
Note that $G'$ is simple otherwise there exists an edge-cut $[A,B]$ of order two such that $G[B]$ is a triangle containing $u$, contradicting Lemma \ref{no splitter with B in F}.
Suppose that $G'$ does not contain two disjoint cycles of length less than $g$.
Since $G$ is $g$-minimal, $G'$ admits a feedback vertex set $S$ of size at most $t_g(m-1) + r_g(G')$.  
Note that $S$ is a feedback vertex set of $G$.  
Therefore $r_g(G') - t_g > 0$.  
Since $G'$ is 2-connected, by Lemmas \ref{r for girth 4} and \ref{r for girth 5}, $G'\in\F_{i,j}$ for some integers $i,j$ with $0 \leq i \leq 4$ and $0 \leq j \leq i$.  
Then $G\in\F_{i+1,j}$, a contradiction.  
Hence $G'$ contains two disjoint cycles of length less than $g$.
Note that one of these cycles contains $u$, so $G$ contains two disjoint cycles, where one has length $g$ and the other has length less than $g$.
\end{proof}

We say a graph $H$ is {\em internally $3$-edge-connected} if $H$ is 2-edge-connected and for every edge-cut $[A,B]$ of $H$ of order two, $\lvert A \rvert=1$ or $\lvert B \rvert=1$.

\begin{lemma} \label{lemma:3-connected}
$G$ is internally 3-edge-connected.
\end{lemma}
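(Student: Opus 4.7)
The plan is to argue by contradiction. Suppose $G$ has an order-$2$ edge-cut $[A,B]$ with $|A|,|B|\ge 2$; since $G$ is 2-connected by Lemma~\ref{2-conn}, the two cut edges have four distinct endpoints, so $u_A\ne v_A$ and $u_B\ne v_B$. Because $G$ has no two disjoint cycles of length less than $g$, at most one of $G[A],G[B]$ contains such a cycle, and after swapping we may assume $G[A]$ has girth at least $g$. Among all such cuts I would choose one minimizing $|B|$ and argue that $G[B]$ must then be 2-connected: otherwise a cut-vertex $w$ of $G[B]$, together with the 2-connectedness of $G$, would separate the ends $u_B,v_B$ of the two cut edges into distinct components $B_1,B_2$ of $G[B]-w$, and using the no-two-disjoint-short-cycles hypothesis on $G$ one of the refined cuts $[A\cup B_i\cup\{w\},B_{3-i}]$ would still have girth at least $g$ on its $A$-side but a strictly smaller $B$-side, contradicting the choice. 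Hence $[A,B]$ is a splitter.

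Passing to a tight splitter $[A^*,B^*]$ refining $[A,B]$ and invoking Lemma~\ref{splitter 2} gives $G[A^*]=K_2$, so $A^*=\{x,y\}$ with $xy\in E(G)$, and each of $x,y$ has a unique other neighbour in $B^*$, namely $u_{B^*}$ and $v_{B^*}$; in particular $x$ and $y$ are both degree-two vertices of $G$. Applying Lemma~\ref{lemma:contracting} to $x$ produces a cycle $C$ of length $g$ through $x$ together with a disjoint cycle $C'$ of length less than $g$. Since $C$ must use both edges at $x$, it factors as $C=xy\,v_{B^*}\,Q\,u_{B^*}\,x$ where $Q$ is a $(v_{B^*},u_{B^*})$-path in $G[B^*]$ of length exactly $g-3$, and $C'\subseteq G[B^*]-V(Q)$.

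The contradiction will come from analysing this forced structure in $G[B^*]$. For $g=5$, the path $Q$ has length $2$, so $u_{B^*}$ and $v_{B^*}$ share a common neighbour $w$ in $G[B^*]$; if they had a second common neighbour then together with $w$ it would give a 4-cycle in $G[B^*]$ disjoint from $C'$, contradicting the hypothesis on $G$. Thus $w$ is unique, and by an analogous application of Lemma~\ref{lemma:contracting} to $y$, combined with Lemma~\ref{no splitter with B in F} (which forbids $G[B^*]$ from belonging to any of the families $\F_{i,j}$ or $\F^g_{i,j,k}$ with $k\geq 1$, or being obtained from such by subdividing an edge), one can pin down $G[B^*]$ into an excluded configuration. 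For $g=4$, $Q$ is the single edge $u_{B^*}v_{B^*}\in E(G[B^*])$ and $C'$ is a triangle disjoint from $\{u_{B^*},v_{B^*}\}$; a parallel analysis of the second neighbours of $u_{B^*},v_{B^*}$ in $G[B^*]$, together with Lemma~\ref{lemma:contracting} applied to any vertex of $V(C')$ whose degree in $G$ is two, rules out every such configuration. I expect the main obstacle to lie precisely in this final step: carrying out the finite but intricate case analysis to show that no local structure of $G[B^*]$ around $V(Q)\cup V(C')$ is consistent with $G$ being $g$-minimal.
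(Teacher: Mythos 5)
Your reduction is sound as far as it goes, and it matches the paper's route: pick an order-two edge-cut with both sides of size at least two, arrange that the $A$-side has girth at least $g$, minimize the $B$-side to get a tight splitter, invoke Lemma~\ref{splitter 2} to conclude $G[A^*]=K_2$ with its two vertices of degree two, and apply Lemma~\ref{lemma:contracting} to force the cycle $C=xy\,v_{B^*}\,Q\,u_{B^*}\,x$ with a disjoint short cycle $C'$ inside $G[B^*]$. But the proof stops exactly where the real work begins. The step you defer (``one can pin down $G[B^*]$ into an excluded configuration'', ``a parallel analysis rules out every such configuration'') is not a routine finite case analysis of the local structure around $V(Q)\cup V(C')$: the graph $G[B^*]$ can be arbitrarily large, so no purely local argument can close the proof. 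The contradiction in the paper is obtained quite differently in the two cases. For $g=4$, one uses Lemma~\ref{no splitter with B in F} to rule out $G[B]$ being a cycle, then finds a nontrivial block $B''$ of $G[B]-\{u_B,v_B\}$ avoiding all short cycles and exhibits $[V(G)-V(B''),V(B'')]$ as a splitter, contradicting tightness. For $g=5$, one applies the $g$-minimality of $G$ to the smaller graph $G-\{u_A,v_A,v_B\}$ to get a feedback vertex set of size at most $\tfrac15(m-5)+\rf(G')+1$, deduces $\rf(G')>0$, and then runs the error-function structure lemmas (Lemmas~\ref{r for girth 5}, \ref{girth 5 i=4}, \ref{no splitter with B in F}, \ref{commuting operations}) on the block decomposition of $G'$ to force $G\in\F_{4,2}$, a contradiction. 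Neither ingredient --- constructing a fresh splitter to contradict tightness, nor applying the induction hypothesis to a vertex-deleted subgraph and exploiting the quantitative error terms --- appears in your plan, and without them the endgame does not go through.

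Two smaller points. In the $g=5$ branch, your uniqueness claim for the common neighbour $w$ of $u_{B^*},v_{B^*}$ is not justified as stated: a second common neighbour $w'$ could lie on $C'$, in which case the $4$-cycle $u_{B^*}wv_{B^*}w'$ is not disjoint from $C'$ and no contradiction with the hypothesis on $G$ arises (recall that at this stage $G$ is not yet known to have girth five, only to lack two disjoint short cycles). In the $g=4$ branch, ``Lemma~\ref{lemma:contracting} applied to any vertex of $V(C')$ whose degree in $G$ is two'' is not available in general, since all vertices of $C'$ may have degree three; the paper avoids this by arguing through blocks of $G[B]-\{u_B,v_B\}$ and the tightness of the splitter instead.
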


\begin{proof}
Suppose that $G$ is not internally 3-edge-connected.
So there exists an edge-cut $[A,B]$ of $G$ of order two such that $\lvert A \rvert, \lvert B \rvert \geq 2$.
Since $G$ does not contain two disjoint cycles of length less than $g$, we may assume that $G[A]$ has girth at least $g$ by symmetry.
Subject to that, we further assume that $B$ is minimal.
Let $u_A,v_A$ be the ends of the edges between $A,B$ in $A$, and let $u_B$ and $v_B$ be the ends of the edges between $A,B$ in $B$.
Let $n_A = \lvert A \rvert$, $n_B=\lvert B \rvert$, $m_A = \lvert E(G[A]) \rvert$ and $m_B = \lvert E(G[B]) \rvert$.

We claim that $G[B]$ is 2-connected.
Suppose that $G[B]$ contains a leaf $v$.
Then $v \in \{u_B,v_B\}$.
Since $G[A]$ has girth at least $g$, $\lvert B \rvert \geq 3$ by Lemma \ref{lemma:contracting}.
Then $[A \cup \{v\},B-\{v\}]$ is an edge-cut of order two with both sides at least two vertices such that $G[A \cup \{v\}]$ has girth at least $g$, contradicting the minimality of $B$.
So $G[B]$ has minimum degree two.
Suppose that $G[B]$ is not 2-connected.
Then there exists a non-trivial end-block $B'$ of $G[B]$ such that $B'$ has girth at least $g$.
Since $G$ is 2-connected, $B'$ contains $u_B$ or $v_B$.
Then $[A \cup V(B'), B]$ is an edge-cut of $G$ of order two such that $G[A \cup V(B')]$ has girth at least $g$, contradicting the minimality of $B$.
Therefore $G[B]$ is 2-connected, as claimed.

Hence, $[A,B]$ is a splitter.
The minimality of $B$ implies that $[A,B]$ is tight.
By Lemma \ref{splitter 2}, $G[A]=K_2$.
By Lemma \ref{lemma:contracting}, there exists a cycle $C$ of length $g$ containing $u_A$ and $v_A$ and a cycle $C'$ of length less than $g$ such that $C$ and $C'$ are disjoint.
Note that $C$ is an induced cycle since otherwise $G$ contains two disjoint cycles of length less than $g$.

If $g=4$, then $u_B$ is adjacent to $v_B$.
Since $G$ is 2-connected, $G[B-\{u_B,v_B\}]$ is connected and contains at most two end-blocks.
By Lemma \ref{no splitter with B in F}, $G[B]$ is not a cycle.
So $G[B]-\{u_B,v_B\}$ contains at least two vertices and is not a path.
Then $G[B]-\{u_B,v_B\}$ has a non-trivial block $B''$ such that $G[B]-V(B'')$ does not contain a cycle of length less than $g$.
So $[V(G)-V(B''),V(B'')]$ is a splitter, contradicting the tightness of $[A,B]$.
Therefore, $g=5$.

Let $G'=G-\{u_A,v_A,v_B\}$.
Note that $v_B$ has degree three in $G$ since $G[B]$ is 2-connected.
By the minimality of $G$, $G'$ admits a feedback vertex set $S'$ of size at most $\frac{1}{5}(m-5)+\rf(G')+1=\frac{1}{5}m+\rf(G')$.
So $\rf(G')>\rf(G)=0$.
Clearly, $G' \neq K_4$ and does not contain $K_4^+$ as a subgraph.

Let $y$ be the vertex in $V(C)-\{u_A,v_A,u_B,v_B\}$.  
If $u_B$ and $y$ belong to different blocks of $G'$, then $y$ is a cut-vertex of $G'$, and hence is a cut-vertex of $G[B]$, since $v_B$ is adjacent to $y$, a contradiction.
So $u_B$ and $y$ belong to the same block of $G'$.
Since $G[B]$ is 2-connected, the block of $G'$ containing $u_B$ and $y$, denoted by $X$, is an end-block.
Hence $G'$ has at most two end-blocks, where at least one of them contains at least two vertices of degree at most two in $G'$.
By Lemma \ref{r for girth 5}, either $G'$ contains a block that is isomorphic to a member of $\bigcup_{j=0}^1\F_{3,j}$, or $G'$ is obtained from a disjoint union of copies of members of $\bigcup_{j=0}^2 \F_{4,j}$ by adding edges such that the added edges are cut-edges.

Suppose that $G'$ contains a block, denoted by $W$, isomorphic to a member of $\F_{3,0} \cup \F_{3,1}$.
If $G'$ is 2-connected, then $G'$ contains at least three vertices of degree at most two, so $G' = W = K_3$, but then $G$ has no cycle of length less than five disjoint from $C$, a contradiction.
So $G'$ is not 2-connected.
Since each member of $\F_{3,0} \cup \F_{3,1}$ contains a cycle of length less than five by Lemma \ref{basic F}, $W$ is the block of $G'$ containing a cycle of length less than five.
Since $[A,B]$ is tight, $W$ contains three vertices incident with cut-edges of $G'$, so $G'$ contains three end-blocks, a contradiction.
Therefore, $G'$ is obtained from a disjoint union of copies of members of $\bigcup_{j=0}^2 \F_{4,j}$ by adding edges such that the added edges are cut-edges.

Since $X$ contains at least two vertices of degree at most two in $G'$, and $G'$ contains at least two blocks, $X$ contains at least three vertices of degree two in $X$.
Hence $X \in \F_{4,0} \cup \F_{4,1}$, so $X$ contains a cycle of length less than five by Lemma \ref{basic F}.
Hence blocks of $G'$ other than $X$ have girth at least five and belong to $\F_{4,2}$.
By Lemma \ref{girth 5 i=4}, each block of $G'$ other than $X$ is isomorphic to $R$.
If $G'$ contains a non-trivial block other than $X$, then $G[B]$ is obtained from a graph in $\F_{4,j,k}^5$ for some $j \in \{0,1\}$ and positive integer $k$ by subdividing an edge, contradicting Lemma \ref{no splitter with B in F}.
Hence $G'=X$.
Since there exists a cycle of length less than five disjoint from $C$, $G'$ has at least five vertices, so $G' \in \F_{4,1}$.
Let $z$ be the vertex in $G'$ other than $y$ adjacent to $v_B$ in $G$.
So $u_B,y,z$ are the vertices of degree two in $G'$.
By Lemma \ref{commuting operations}, the graph, denoted by $G''$, obtained from $G'$ by suppressing $y,z$ belongs to $\F_{2,1}$.
Then $G$ can be obtained from $G''$ by taking the operation $\circ$ and then subdividing an edge twice.
Therefore, $G \in \F_{4,2}$, a contradiction.
This proves the lemma.
\end{proof}

\begin{lemma} \label{lemma:menger}
If $u$ and $v$ are vertices of $G$ of degree three, then there exist three internally disjoint paths from $u$ to $v$.
\end{lemma}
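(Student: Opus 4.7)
The plan is to apply Menger's theorem, reducing the lemma to showing that no two-element subset $\{x,y\}$ of $V(G) - \{u,v\}$ separates $u$ from $v$ in $G$. Arguing by contradiction, suppose such a pair $\{x,y\}$ exists; let $A'$ be the component of $G - \{x,y\}$ containing $u$, and set $B' = V(G) - \{x,y\} - A'$, so that $v \in B'$.

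The first step is a short piece of edge accounting. Since $G$ is $2$-connected by Lemma \ref{2-conn}, both $G-x$ and $G-y$ are connected, and any $A'$-to-$B'$ path in $G-x$ must traverse $y$; hence each of $x$ and $y$ has at least one neighbor in $A'$ and at least one in $B'$. Write $a_x, b_x, a_y, b_y \geq 1$ for these neighbor counts, and let $\epsilon \in \{0,1\}$ indicate whether $xy \in E(G)$. The edge cut $[A', B' \cup \{x,y\}]$ has order $a_x + a_y$; if this were equal to $2$, then Lemma \ref{lemma:3-connected} would force one side to be a single vertex, and since $\lvert B' \cup \{x,y\} \rvert \geq 3$ we would need $A' = \{u\}$. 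But then all three neighbors of $u$ would lie in the two-element set $\{x,y\}$, impossible in a simple graph. Thus $a_x + a_y \geq 3$, and by symmetry $b_x + b_y \geq 3$.

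Adding these gives $6 \leq (a_x+b_x)+(a_y+b_y) = \deg(x)+\deg(y)-2\epsilon \leq 6-2\epsilon$, forcing $\epsilon = 0$ (so $xy \notin E(G)$), $\deg(x) = \deg(y) = 3$, and $a_x+a_y = b_x+b_y = 3$. Each count then lies in $\{1,2\}$, and by symmetry I may assume $a_x = b_y = 1$ and $a_y = b_x = 2$. The finishing move is to examine the ``crossed'' edge cut $[A' \cup \{y\}, B' \cup \{x\}]$: since $xy \notin E(G)$ and no edge joins $A'$ to $B'$ directly, its edges are exactly those from $A'$ to $x$ together with those from $y$ to $B'$, totaling $a_x + b_y = 2$. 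Both sides of this cut have at least two vertices ($\{u,y\}$ on one, $\{v,x\}$ on the other), contradicting Lemma \ref{lemma:3-connected}.

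The main obstacle is spotting the right second edge cut: the degree bookkeeping is routine and pins the configuration down, but the contradiction only becomes visible after regrouping the cut asymmetrically by moving one of $x,y$ across. This is precisely where the internal $3$-edge-connectedness supplied by Lemma \ref{lemma:3-connected} does work beyond what $2$-connectedness alone would give.
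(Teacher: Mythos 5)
Your proof is correct and takes essentially the same route as the paper: apply Menger's theorem to get a two-vertex separator $\{x,y\}$ and then contradict the internal $3$-edge-connectivity of Lemma \ref{lemma:3-connected} by exhibiting a $2$-edge-cut obtained by moving one of $x,y$ across the partition. The only difference is cosmetic: you pin down the configuration by degree bookkeeping before forming the cut $[A'\cup\{y\},\,B'\cup\{x\}]$, whereas the paper reaches the same cut directly by choosing the single edges $xx'$ and $yy'$ case-wise.
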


\begin{proof}
Suppose $u$ and $v$ are vertices of $G$ of degree three for which there are no three internally disjoint paths from $u$ to $v$.  
By Menger's theorem, there exist two vertices $x$ and $y$ such that $u$ and $v$ are in different components, $A$ and $B$, of $G-\{x,y\}$.  
Since $G$ is subcubic, we can assume without loss of generality that $x$ is adjacent to only one vertex $x'\in V(A)$.  
If $y$ is adjacent to only one vertex in $A$, call it $y'$.  
Otherwise let $y'$ be the vertex in $B$ adjacent to $y$.  

Then $G-\{xx', yy'\}$ contains at least two components, one containing $A$ and another containing $B$.  
Since $u$ and $v$ have degree three, these components each have at least two vertices, contradicting that $G$ is internally 3-edge-connected.
\end{proof}

\begin{lemma}\label{lemma:remove}
If $X\subseteq V(G)$ and $\lvert E(X, V(G-X)) \rvert = 3$, then $G-X$ is connected and has at most one nontrivial block.  
Furthermore, if $u,v\in V(G-X)$ are adjacent to vertices in $X$ but $uv \not \in E(G)$, then $G- X + uv$ has exactly one nontrivial block, and this block contains $u$ and $v$.
\end{lemma}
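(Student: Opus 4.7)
The plan is to prove both assertions by edge-cut counting arguments, leveraging $2$-edge-connectivity and internal $3$-edge-connectivity of $G$ (from Lemmas \ref{2-conn} and \ref{lemma:3-connected}) against the hypothesis that only three edges leave $X$.

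For the first assertion, connectedness of $G-X$ is immediate: any component of $G-X$ joined to $X$ by only one edge would produce a bridge in $G$, so each component must receive at least two of the three available edges, leaving room for at most one component. For the claim that $G-X$ has at most one nontrivial block, I will assume for contradiction two nontrivial blocks $B_1, B_2$, and let $c$ be the cut-vertex of $G-X$ in $V(B_1)$ on the block-cut-tree path from $B_1$ to $B_2$; then $c \notin V(B_2)$, for otherwise $c$ would need at least four neighbors. Subcubicity of $G$, $2$-connectivity of $B_1$, and the cut-vertex role of $c$ force $c$ to have exactly two neighbors in the component $Z_1$ of $(G-X)-c$ containing $V(B_1)\setminus\{c\}$, exactly one neighbor in the component $Z_2$ containing $V(B_2)$, no neighbor in $X$, and $(G-X)-c$ to consist of only these two components. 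Applying Lemma \ref{lemma:3-connected} to the cuts $[Z_1, V(G)\setminus Z_1]$ and $[Z_2, V(G)\setminus Z_2]$ then pins down the three edges from $V(G-X)$ to $X$ as exactly one from $Z_1$ and two from $Z_2$. Consequently the cut $[Z_1\cup\{c\}, V(G)\setminus(Z_1\cup\{c\})]$ has order $2$ with both sides of size at least two, contradicting Lemma \ref{lemma:3-connected}.

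For the furthermore, first note $|V(G-X)| \geq 3$ since $G-X$ is connected, $u \neq v$, and $uv \notin E(G)$. Adding $uv$ to $G-X$ merges into one block all blocks lying on the block-cut-tree path from $u$ to $v$. If $G-X$ is a tree, this path produces a unique cycle which is the sole nontrivial block of $G-X+uv$ and contains $u$ and $v$; otherwise let $B$ be the unique nontrivial block of $G-X$. The only way the conclusion can fail is if $u$ and $v$ both lie in the same branch $T_c$ of $G-X$ hanging off some cut-vertex $c$ of $G-X$ in $V(B)$, since then $uv$ creates a cycle disjoint from $V(B)$ and leaves $B$ as a second nontrivial block. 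To exclude this case, note that since $B$ is $2$-connected and $G$ is subcubic, every cut-vertex of $G-X$ in $V(B)$ has exactly two neighbors in $V(B)$ and at most one outside, so each branch is joined to $V(B)$ by exactly one edge. Letting $k$ denote the number of cut-vertices of $G-X$ in $V(B)$ (equivalently, the number of branches), Lemma \ref{lemma:3-connected} combined with minimum degree $2$ in $G$ forces each branch to send at least one edge to $X$, and each branch on at least two vertices to send at least two. Under the bad assumption, $T_c$ contributes at least two edges and each of the other $k-1$ branches at least one, yielding $e_B + k \leq 2$, where $e_B$ counts edges from $V(B)$ to $X$. But the cut $[V(B), V(G)\setminus V(B)]$ has order exactly $e_B + k$ with both sides of size at least two, so Lemma \ref{lemma:3-connected} forces $e_B + k \geq 3$, a contradiction. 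The main technical obstacle is this simultaneous edge counting, which reconciles internal $3$-edge-connectivity demands on $V(B)$ and on each of its branches against a total budget of only three external edges.
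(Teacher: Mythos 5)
Your argument is sound in outline and reaches the conclusion by a genuinely different route than the paper. The paper deduces both assertions in a few lines from Lemma \ref{lemma:menger}: it picks vertices $a,b$ of degree three in $G$ lying in the two putative nontrivial blocks, takes three internally disjoint $a$--$b$ paths in $G$, observes that any path meeting $X$ must use at least two of the three edges of the cut between $X$ and $V(G-X)$, so at least two of the paths avoid $X$ and form a cycle of $G-X$ through $a$ and $b$, forcing the two blocks to coincide (nontrivial blocks of a subcubic graph are disjoint). You instead work directly with Lemma \ref{lemma:3-connected}, decomposing $G-X$ around a cut-vertex (first assertion) or around the unique nontrivial block $B$ and its pendant trees (second assertion) and counting how the three $X$-edges can be distributed; the counts you derive (one $X$-edge from $Z_1$ and two from $Z_2$, and $e_B+k\leq 2$ against the requirement $e_B+k\geq 3$ for the cut around $V(B)$) are correct. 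The paper's route is shorter and treats both assertions uniformly; yours avoids invoking Menger's theorem at the cost of a structural case analysis.

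Two steps need one more sentence each. First, ``each branch is joined to $V(B)$ by exactly one edge'' does not follow from the degree count at cut-vertices of $V(B)$ alone: you must also rule out a component of $(G-X)-V(B)$ attaching at two \emph{distinct} vertices of $B$, which is excluded because an ear through such a component together with $B$ would be a larger $2$-connected subgraph, contradicting the maximality of the block $B$. Second, your enumeration of failure cases omits the possibility that one of $u,v$ equals the attachment vertex $c$ of the branch containing the other; in that case the new edge creates a cycle meeting $V(B)$ only in $c$, again leaving $B$ as a second nontrivial block, and your counting (which charges two $X$-edges to the branch $T_c$) does not cover it. That case is impossible for a different reason: $c$ would then have two neighbours in $B$, one in the branch, and one in $X$, contradicting that $G$ is subcubic. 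With these two small patches the proof is complete.
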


\begin{proof}
Note that $G-X$ is connected, since otherwise $G$ has a cut-edge.  
Suppose $B_1$ and $B_2$ are distinct nontrivial blocks in $G- X$.  
Since $G-X$ is connected, there are vertices $a\in V(B_1)$ and $b\in V(B_2)$ of degree three in $G$.  
By Lemma \ref{lemma:menger}, there are three internally disjoint paths from $a$ to $b$ in $G$.  
Since $\lvert E(X, G-X) \rvert=3$, at most one of these paths can contain vertices of $X$.
Then the other two form a cycle in $G-X$, so $a$ and $b$ are in the same block, a contradiction.

Suppose $u,v\in V(G- X)$ are adjacent to vertices in $X$ but $uv \not \in E(G)$.
Since $G-X$ is connected, $u$ and $v$ are in the same nontrivial block, $B$, in $(G- X)+uv$.  
Suppose $B'$ is another nontrivial block in $(G-X)+uv$.  
Since $(G- X)+uv$ is connected, there are vertices $a\in V(B)$ and $b\in V(B')$ of degree three in $G$.  
Then by Lemma \ref{lemma:menger}, there are three internally disjoint paths from $a$ to $b$ in $G$.  
Since $\lvert E(X, V(G- X)) \rvert=3$, at most one of these paths can contain vertices of $X$.  
Then the other two form a cycle in $G-X$, so $a$ and $b$ are in the same block, a contradiction.
\end{proof}

\begin{lemma} \label{lemma:notriangle}
$G$ has no triangle.
\end{lemma}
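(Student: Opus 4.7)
The plan is a proof by contradiction. Assume $G$ contains a triangle $T$ with vertex set $\{a,b,c\}$.

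I first show that each of $a, b, c$ has degree $3$ in $G$. Suppose $a$ has degree $2$. Then its neighbors are $b$ and $c$, and 2-connectivity forces $d(b)=d(c)=3$, so the edge-cut $[V(T), V(G)-V(T)]$ has order $2$. By internal 3-edge-connectivity (Lemma \ref{lemma:3-connected}), $|V(G)-V(T)|=1$, so $G$ is the 4-vertex graph $K_4$ minus an edge. But this graph is obtained from the 4-cycle in $\F_{4,0}$ by adding a chord, hence lies in $\F^g_{4,0,0}$, contradicting $g$-minimality.

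Let $a', b', c'$ be the external neighbors of $a, b, c$ in $G - V(T)$. If $a'=b'=c'$, then $G[\{a,b,c,a'\}]=K_4$, and since $a'$ has degree $3$, $G=K_4 \in \F_{1,1}$, a contradiction. If exactly two of $a', b', c'$ coincide, say $a'=b'=w$, then $G[\{a,b,c,w\}]$ is $K_4$ minus the edge $cw$; internal 3-edge-connectivity applied to the order-$2$ cut $[\{a,b,c,w\}, V(G)-\{a,b,c,w\}]$ forces $|V(G)|=5$, with the outside vertex being the common third neighbor of $c$ and $w$, so $G = K_4^+ \in \F_{2,1}$, again a contradiction.

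The remaining case is $a', b', c'$ pairwise distinct. Then $[V(T), V(G)-V(T)]$ has order $3$, so by Lemma \ref{lemma:remove}, $G - V(T)$ is connected with at most one nontrivial block. Using $g$-minimality and the fact that appending any vertex of $T$ to a minimum feedback vertex set of $G - V(T)$ gives one for $G$, we obtain $\phi(G) \leq t_g(m-6) + r_g(G - V(T)) + 1$; combined with $\phi(G) > t_g m$, this forces $r_g(G - V(T)) > 6 t_g - 1$, which is $1/3$ for $g=4$ and $1/5$ for $g=5$. I would then apply Lemma \ref{r for girth 4} or Lemma \ref{r for girth 5} to restrict $G - V(T)$ to a short list of structures, and eliminate each by combining three constraints: the ``at most one nontrivial block'' restriction of Lemma \ref{lemma:remove} rules out all multi-nontrivial-block structures (such as ``$K_4^+$ plus $L$-copies joined by cut-edges''); the fact that $a', b', c'$ are three distinct vertices of $G - V(T)$ each of degree at most $2$ rules out $K_4$, $K_4^+$, and members of $\F_{3, j}$ for $j \geq 1$ (which carry at most two degree-$2$ vertices); and the sole remaining case $G - V(T) = K_3 \in \F_{3, 0}$ (relevant only for $g=5$) produces a triangle in $G - V(T)$ disjoint from $T$, contradicting the hypothesis that $G$ has no two disjoint cycles of length less than $g$.

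The main technical obstacle will be the final case analysis in the distinct external neighbors case; in particular, handling the possibility that one or more of $a', b', c'$ has degree $2$ in $G$ (and hence has degree $1$ in $G - V(T)$, creating leaves and possibly extra $K_2$ blocks in the block decomposition) requires careful bookkeeping of degrees and of the error function $r_g$ to ensure the three constraints above continue to exhaust all cases.
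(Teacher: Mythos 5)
Your setup (forcing $a,b,c$ to have degree three, ruling out coincidences among the external neighbors $a',b',c'$, and then analyzing the graph obtained by deleting the triangle via Lemmas \ref{lemma:remove}, \ref{r for girth 4} and \ref{r for girth 5}) matches the paper, but the central quantitative step is wrong. You claim that appending any single vertex of $T$ to a minimum feedback vertex set $S$ of $G-V(T)$ yields a feedback vertex set of $G$. It does not: a cycle of $G$ through, say, $b$ and $c$ but not $a$ consists of the path $b'bcc'$ together with a $b'$--$c'$ path in $G-V(T)$, and nothing forces $S$ to meet that path, since it is not a cycle of $G-V(T)$. So the inequality $\phi(G)\leq t_g(m-6)+r_g(G-V(T))+1$ is unjustified, and it is exactly what drives your conclusion $r_g(G-V(T))>6t_g-1$. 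The obvious repair of adding two vertices of $T$ only gives $r_g(G-V(T))>6t_g-2$, which is negative for both $g=4$ and $g=5$ and eliminates nothing, so the gap is fatal to the proposed case analysis rather than a bookkeeping issue.

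The paper's proof gets around precisely this point: after disposing of the configurations in which $a',b',c'$ span two or three edges (two disjoint triangles, or a very small graph in $\F_{3,1}$, or $\phi(G)=2$), it may assume $b'$ is adjacent to neither $a'$ nor $c'$ and works with $(G-X)+b'c'$ instead of $G-X$. A feedback vertex set of $(G-X)+b'c'$ does hit every cycle of $G$ avoiding $a$, because cycles through $bc$ correspond to cycles through the new edge $b'c'$; this yields $\epsilon_g(B)>5t_g-1$ for the unique nontrivial block $B$ given by Lemma \ref{lemma:remove}. Moreover, the added edge is what allows the structural punchline: $(G-X)+b'c'$ is identified (via Lemmas \ref{r for girth 4} and \ref{r for girth 5}) as a member of some $\F_{i,j}$ or as $K_4^+$ plus a pendant vertex, and then $G$ is recovered from it by the operation $\circ$ applied to the edge $b'c'$, so $G\in\F_{i,j+1}$ (or $\F_{3,2}$), contradicting $g$-minimality. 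Your sketch has no analogue of either the edge addition or this reconstruction step, and your final elimination argument (which you yourself flag as delicate when some $a'$ has degree two) rests entirely on the invalid inequality, so the proposal as it stands does not prove the lemma.
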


\begin{proof}
Suppose $a,b,$ and $c$ are the vertices of a triangle in $G$.  
Since $G$ is not the triangle and is not the graph obtained from $K_4$ by deleting an edge, $a,b,$ and $c$ have degree three by Lemma \ref{lemma:3-connected}.
Let $X = \{a,b,c\}$.  
Let $a',b',$ and $c'$ be the neighbors of $a,b,$ and $c$ not in $X$ respectively.
If $a',b',$ and $c'$ are not pairwise distinct, then $G \in \{K_4,K_4^+\}$ by Lemma \ref{lemma:3-connected}.
Therefore $a',b'$, and $c'$ are pairwise distinct.
If $a',b',$ and $c'$ are pairwise adjacent, then $G$ contains two disjoint triangles, a contradiction.
Suppose one of $a',b',$ and $c'$ is adjacent to the other two.  We may assume without loss of generality that $a'$ is adjacent to $b'$ and $c'$.  By Lemma \ref{lemma:3-connected}, $G - X - \{a',b',c'\}$ is either the empty graph or an isolated vertex.  
In the first case, $G\in \F_{3,1}$, a contradiction.   
In the second case, $\phi(G) =2 \leq t_gm$, a contradiction.

Therefore, $|E(G[\{a',b',c'\}]) | \leq 1$.
By symmetry, we may assume that $b'$ is not adjacent to $a'$ or $c'$.
By Lemma \ref{lemma:remove}, $(G- X)+b'c'$ is connected and has at most one nontrivial block, $B$.  
Since $G- X$ has girth at least $g$, $B$ admits a feedback vertex set $S$ of size at most $t_g(m - 5) + \epsilon_g(B) = t_gm - 5t_g + \epsilon_g(B)$.
Then $S\cup\{a\}$ is a feedback vertex set of $G$ of size at most $t_gm + \epsilon_g(B) + 1 - 5t_g$, so $\epsilon_g(B) > 5t_g - 1$.  
Therefore $\epsilon_g(B) > 0$. 

Since $(G-X)+b'c'$ has only one nontrivial block, by Lemmas \ref{r for girth 4} and \ref{r for girth 5}, either $(G-X)+b'c' \in \F_{i,j}$ for some integers $i,j$ with $1 \leq i \leq 4$ and $0 \leq j \leq i$, or $(G-X)+b'c'$ contains $K_4^+$ as a subgraph.
For the former, $G = ((G-X)+b'c') \circ (b'c',b'c',a)$, so $G \in \F_{i,j+1}$, a contradiction.
For the latter, since $G$ is 2-connected, $(G-X)+b'c'$ can be obtained from $K_4^+$ by attaching a leaf, so $G \in \F_{3,2}$, a contradiction.
\end{proof}

\begin{lemma} \label{cubic in short cycle}
No vertex in a cycle of $G$ of length less than five has degree two in $G$.
\end{lemma}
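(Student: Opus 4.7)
The plan is to derive a contradiction by applying Lemma \ref{lemma:contracting} to a hypothetical degree-two vertex $v$ on a short cycle. Suppose $v$ has degree two in $G$ and lies on a cycle $C$ of length less than five. Since $G$ is triangle-free by Lemma \ref{lemma:notriangle}, $C$ must be a 4-cycle $C = vabcv$ whose neighbors of $v$ are $a$ and $c$.

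I would then invoke Lemma \ref{lemma:contracting} to obtain a cycle $C_0$ of length $g$ through $v$ and a cycle $C'$ of length less than $g$ with $V(C_0) \cap V(C') = \emptyset$. Lemma \ref{lemma:notriangle} forces $C'$ to have length at least four, so the $g = 4$ case is immediate: no such $C'$ exists, contradiction. Hence the substantive case is $g = 5$, in which $C_0$ is a 5-cycle and $C'$ a 4-cycle.

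Since the only edges incident to $v$ are $va$ and $vc$, the cycle $C_0$ has the form $v a x_1 x_2 c v$. The key local step is to rule out $b \in V(C_0)$: with $b = x_1$ the edges $bc$, $bx_2$, $x_2 c$ would form a triangle $bx_2 c$, while $b = x_2$ would produce a triangle $ax_1 b$ via $ab$, $ax_1$, $x_1 b$, both contradicting Lemma \ref{lemma:notriangle}. With $b \notin V(C_0)$ established, $a$ has three pairwise distinct neighbors $v, b, x_1$ and $c$ has three pairwise distinct neighbors $v, b, x_2$, so $d(a) = d(c) = 3$ and $b$'s only neighbors inside $V(C_0)$ are $a$ and $c$.

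Finally, I would rule out $b \in V(C')$: membership would require two neighbors of $b$ inside $V(C')$, but $a, c \in V(C_0)$ are disjoint from $V(C')$, leaving at most one candidate (the possible third neighbor of $b$). Hence $V(C) \cap V(C') \subseteq \{b\} \cap V(C') = \emptyset$, so $C$ and $C'$ are two disjoint cycles of length less than five, contradicting the $g$-minimality of $G$. The only non-routine step is the position analysis of $b$ relative to $C_0$ in the third paragraph; once $b$ is excluded from $C_0$ the degrees of $a,c$ and the final disjointness argument fall out immediately.
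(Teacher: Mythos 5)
Your proposal is correct and takes essentially the same route as the paper: apply Lemma \ref{lemma:contracting} at the degree-two vertex, observe that both of its neighbours must lie on the length-$g$ cycle, and use subcubicity (the fourth vertex $b$ already has two neighbours on that cycle) to exclude $b$ from $C'$, yielding two disjoint short cycles. Your intermediate steps ruling out $b\in V(C_0)$ and computing the degrees of $a$ and $c$ are valid but not actually needed for the final disjointness argument.
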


\begin{proof}
Let $D$ be a cycle of length less than five containing a vertex $v$ of degree two in $G$.
$D$ has length four by Lemma \ref{lemma:notriangle}.
By Lemma \ref{lemma:contracting}, there exists a cycle $C$ of length $g$ containing $v$ and a cycle $C'$ with length less than $g$ disjoint from $C$.
Since $v$ has degree two, $D$ shares at least three vertices with $C$.
Since $G$ is subcubic, $C'$ and $D$ are disjoint cycles of length less than $g$, a contradiction.
\end{proof}

\begin{lemma}\label{lemma:op2makesbad}
Let $a$, $b$ and $c$ be distinct vertices of degree three in $G$ such that $abc$ is a path in $G$.  
Let $a_1$ and $a_2$ be the neighbors of $a$ other than $b$, let $c_1$ and $c_2$ be the neighbors of $c$ other than $b$, and let $b'$ be the other neighbor of $b$.  
If $G'= (G-\{a,b,c\})+a_1a_2+c_1c_2$, then $G'$ contains two disjoint cycles of length less than $g$.
\end{lemma}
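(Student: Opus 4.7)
The plan is to argue by contradiction, exploiting the fact that $G$ arises from $G'$ by the operation $\circ$. Suppose $G'$ contains no two disjoint cycles of length less than $g$. First count $|E(G')| = m-5$: we delete the seven edges $aa_1,aa_2,ab,bc,bb',cc_1,cc_2$ incident to $\{a,b,c\}$ and add back the two edges $a_1a_2,c_1c_2$. By Lemma \ref{lemma:notriangle} neither $a_1a_2$ nor $c_1c_2$ already lies in $E(G)$, so aside from possible coincidences among $a_1,a_2,c_1,c_2$ the graph $G'$ is simple. The $g$-minimality of $G$, applied to $G'$ (which has strictly fewer vertices and satisfies the hypothesis of Theorem \ref{real main thm} by assumption), supplies a feedback vertex set $S'$ of $G'$ with $|S'|\leq t_g(m-5)+r_g(G')$.

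Next I claim $S'\cup\{b\}$ is a feedback vertex set of $G$. Any cycle of $G-(S'\cup\{b\})$ that avoids both $a$ and $c$ already lies in $G'-S'$, which is a forest. Any cycle through $a$ (respectively $c$) must, since $b$ is removed, traverse both edges $aa_1,aa_2$ (respectively $cc_1,cc_2$); short-circuiting those two-edge paths via the edges $a_1a_2$ (respectively $c_1c_2$) in $G'$ produces a cycle in $G'-S'$, again impossible. Therefore $\phi(G)\leq t_g(m-5)+r_g(G')+1$, and combining with the $g$-minimality relation $\phi(G)>t_gm+r_g(G)=t_gm$ (noting $r_g(G)=0$, as recorded after Lemma \ref{2-conn}) gives $r_g(G')>5t_g-1$. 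Thus $\rt(G')\geq\tfrac{2}{9}$ when $g=4$ and $\rf(G')\geq\tfrac{1}{5}$ when $g=5$.

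In the generic case where $b'$ has degree three in $G$ and $b'\notin\{a_1,a_2,c_1,c_2\}$, the vertex $b'$ has degree two in $G'$ and one checks directly that
\[G=G'\circ(a_1a_2,\,c_1c_2,\,b').\]
Hence whenever $G'\in\F_{i,j}$ this immediately yields $G\in\F_{i,j+1}$, contradicting the fact established after Lemma \ref{2-conn} that $G\notin\bigcup_{i,j}\F_{i,j}$. More generally, the bound on $r_g(G')$, together with Lemmas \ref{r for girth 4} and \ref{r for girth 5}, forces $G'$ to decompose along cut-edges as copies of $L$ (for $g=4$) or $R$ (for $g=5$) glued to a single block that either lies in some $\F_{i,j}$ or contains $K_3$ or $K_4^+$. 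Applying $\circ$ to this decomposition then places $G$ itself in $\F^g_{i',j',k'}$ for suitable indices, again contradicting the fact that $G\notin\bigcup_{i,j,k}\F^g_{i,j,k}$.

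The main obstacle will be the bookkeeping in this final step: one must verify, for each of the structural possibilities produced by Lemmas \ref{r for girth 4} and \ref{r for girth 5}, that subdividing the two specific edges $a_1a_2$ and $c_1c_2$ of $G'$ (which may lie in the same block or in distinct blocks of $G'$) and attaching the new vertex $b$ to $b'$ still lands $G$ inside a forbidden family. Particular care is needed in the degenerate subcases where $b'\in\{a_1,a_2,c_1,c_2\}$ (so $b'$ may have higher degree in $G'$ than the generic analysis suggests), where two or more of $a_1,a_2,c_1,c_2$ coincide (potentially making $G'$ a multigraph), or where $b'$ has degree two rather than three in $G$, so that the clean identification $G=G'\circ(a_1a_2,c_1c_2,b')$ must be adapted before the structural classification can be invoked.
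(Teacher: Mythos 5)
Your counting half matches the paper: the edge count $|E(G')|=m-5$, the use of $g$-minimality, the verification that $S'\cup\{b\}$ is a feedback vertex set of $G$, and the conclusion $r_g(G')>5t_g-1$ are exactly the paper's steps, and your generic-case contradiction via $G=G'\circ(a_1a_2,c_1c_2,b')$ and $G\in\F_{i,j+1}$ is also the paper's. The genuine gap is in the structural half, which you defer to ``bookkeeping'': you never establish that $G'$ has only one nontrivial block. The paper gets this before any counting, from Lemma \ref{lemma:remove}: if $G'$ had two nontrivial blocks then so would $G-b$, which is impossible because $\lvert E(\{b\},V(G-b))\rvert=3$ (this rests on internal 3-edge-connectivity, Lemma \ref{lemma:3-connected}, via Lemma \ref{lemma:menger}). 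Without this step, Lemmas \ref{r for girth 4} and \ref{r for girth 5} leave open configurations in which $G'$ consists of several nontrivial blocks joined by cut-edges, e.g.\ two copies of $L$ (for $g=4$, Statement 4(b)) or two copies of $R$ (for $g=5$, Statement 3(c)), with $a_1a_2$ and $c_1c_2$ lying in different copies. For such a $G'$ your claimed contradiction fails: applying $\circ$ subdivides an edge inside each copy, and the resulting 2-connected, triangle-free graph $G$ is in general \emph{not} a member of any $\F^g_{i',j',k'}$, since members of those families contain $k$ unsubdivided copies of $L$ or $R$ joined to an $\F_{i,j}$-member by exactly $k+1$ added edges (already a vertex/edge count together with triangle-freeness rules out the only numerically possible candidates such as $\F^4_{3,0,2}$). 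So in those cases your argument produces no contradiction at all.

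Note also that the paper never invokes the families $\F^g_{i,j,k}$ to finish this lemma. Once the unique nontrivial block $B$ is known, $r_g(G')>5t_g-1$ together with Lemmas \ref{r for girth 4} and \ref{r for girth 5} forces $G'$ to be a member of $\bigcup_{i=2}^{4}\bigcup_{j=0}^{i-1}\F_{i,j}$ with at most one pendant path $P$ attached (ending at $b'$, the only possible vertex of degree less than two in $G'$), and then $G\in\F_{i+\lvert E(P)\rvert,\,j+1}$; this single statement also disposes of your ``degenerate'' case where $b'$ has degree two in $G$, which you flag but do not resolve. In short, the missing idea is the unique-block argument via Lemma \ref{lemma:remove}; the route you sketch for the non-generic cases (landing $G$ in $\F^g_{i',j',k'}$) is not a correct substitute for it.
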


\begin{proof}
Note that $a_1a_2,c_1c_2 \not \in E(G)$ since $G$ has no triangle.
By Lemma \ref{lemma:3-connected}, $G'$ is connected.  
Note that $G=G' \circ (a_1a_2,c_1c_2,b')$, so $G' \not \in \bigcup_{i \geq 1, j \geq 0} \F_{i,j}$.
If $G'$ has at least two nontrivial blocks, then $G-b$ has at least two nontrivial blocks.
But $\lvert E(\{b\}, V(G- b)) \rvert = 3$, so $G-b$ has at most one nontrivial block by Lemma \ref{lemma:remove}.  
Therefore, $G'$ has exactly one nontrivial block $B$.

Suppose that $G'$ does not contain two disjoint cycles of length less than $g$.
Since $G$ is $g$-minimal, $G'$ admits a feedback vertex set $S$ of size at most 
$t_g(m - 5) + r_g(G') = t_gm + r_g(G') - 5t_g$.
But $S\cup\{b\}$ is a feedback vertex set of $G$ of size at most $t_gm + r_g(G') - 5t_g + 1$.  
Therefore $r_g(G') > 5t_g - 1 \geq 0$.

Since $G' \not \in \bigcup_{i \geq 1 ,j \geq 1} \F_{i,j}$, and $B$ is the unique nontrivial block of $G'$, and $G'$ has no triangle, by Lemmas \ref{r for girth 4} and \ref{r for girth 5}, $G'$ can be obtained from a graph in $\{K_4^+\} \cup \bigcup_{j=1}^2\F_{3,j} \cup \bigcup_{j=0}^3 \F_{4,j} \subseteq \bigcup_{i=2}^4 \bigcup_{j=0}^{i-1} \F_{i,j}$ by attaching trees.
But every vertex of $G'$ other than $b'$ has degree at least two in $G'$, so we can attach at most one tree, and this tree is a path $P$.
Hence there exist integers $i,j$ with $2 \leq i \leq 4$ and $0 \leq j \leq i-1$ such that $G \in \F_{i+\lvert E(P) \rvert, j+1}$, a contradiction.
\end{proof}

\begin{lemma} \label{K_{2,3} neighbors}
If $D$ is a subgraph of $G$ isomorphic to $K_{2,3}$, then there are three distinct vertices in $G-V(D)$ adjacent in $G$ to vertices in $D$.
\end{lemma}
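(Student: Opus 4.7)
The plan is to exploit the rigidity forced by Lemma \ref{cubic in short cycle} (no degree-two vertex lies in a short cycle) together with internal $3$-edge-connectivity from Lemma \ref{lemma:3-connected}. Let the $K_{2,3}$-subgraph $D$ have bipartition $\{x_1,x_2\} \cup \{y_1,y_2,y_3\}$. Since $x_1$ and $x_2$ already have degree three inside $D$, they contribute no external edges, so all edges between $V(D)$ and $V(G) - V(D)$ are incident with some $y_i$. Each $y_i$ lies in the 4-cycle $y_i x_1 y_j x_2 y_i$ of $D$ for any $j\neq i$, so by Lemma \ref{cubic in short cycle} $y_i$ has degree three in $G$; thus each $y_i$ has exactly one external neighbor $z_i$. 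The task is then to show $z_1, z_2, z_3$ are pairwise distinct.

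Suppose for contradiction they are not. In the first case $z_1 = z_2 = z_3$; then the six-vertex set $V(D) \cup \{z_1\}$ is closed under taking neighbors, so $G$ is exactly the induced subgraph on this set, which is $K_{3,3}$. Since $K_{3,3} \in \F^g_{3,1,0}$ (noted in Section \ref{sec:special families}), this contradicts the fact, established via Lemma \ref{vertex property F}, that $G$ belongs to no $\F^g_{i,j,k}$.

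Otherwise, relabeling, $z_1 = z_2 = z \neq z_3$. The 4-cycle $zy_1x_1y_2z$ together with Lemma \ref{cubic in short cycle} force $z$ to have degree three in $G$ with a third neighbor $z'$; note $z' \notin V(D)$ because $x_1, x_2$ are saturated and $z' = y_3$ would give $z = z_3$. Hence $[V(D) \cup \{z\}, V(G) \setminus (V(D) \cup \{z\})]$ is an edge-cut of order exactly two (with edges $y_3 z_3$ and $zz'$). By Lemma \ref{lemma:3-connected} one of its sides has a single vertex; the larger side has six, so $z' = z_3$ and $V(G) = V(D) \cup \{z, z_3\}$ has size seven. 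Thus $G$ is obtained from the $K_{3,3}$ on $V(D) \cup \{z\}$ by subdividing the edge $zy_3$. Writing this $K_{3,3}$ as $L + x_1y_1$ with $L \in \F_{3,1}$, subdividing $zy_3$ inside $L$ gives a graph in $\F_{4,1}$ (by Lemma \ref{basic F}), and restoring the edge $x_1y_1$ exhibits $G \in \F^g_{4,1,0}$, the same contradiction as in the first case.

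The only subtle step is verifying in the second case that $z' \notin V(D) \cup \{z\}$, since otherwise the edge-cut might have order less than two or contain a loop and the 3-edge-connectivity argument would need adjustment. Once that is established, the argument is a direct combination of the structural lemmas already proved.
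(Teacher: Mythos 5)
Your proof is correct and follows essentially the same route as the paper: it uses Lemma \ref{cubic in short cycle} to force the three degree-two vertices of $D$ (and then the repeated neighbour) to have degree three, rules out the all-equal case via $K_{3,3}\in\F^g_{3,1,0}$, and uses internal $3$-edge-connectivity (Lemma \ref{lemma:3-connected}) when exactly two external neighbours coincide. The only difference is the finishing move in that last case, and it is cosmetic: the paper exhibits a size-two feedback vertex set of the resulting $7$-vertex, $10$-edge graph and contradicts minimality numerically, while you identify that graph as a subdivided $K_{3,3}$, i.e.\ a member of $\F^g_{4,1,0}$, and invoke the fact that a $g$-minimal graph lies in no $\F^g_{i,j,k}$ --- both contradictions are valid.
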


\begin{proof}
Let $D$ be a subgraph of $G$ isomorphic to $K_{2,3}$.
Since $G$ has no triangle, $D$ is an induced subgraph of $G$.
By Lemma \ref{cubic in short cycle}, every vertex in $D$ has degree three.
Let $a,b,c$ be the vertices of degree two in $D$.
Let $a',b',c'$ be the neighbors of $a,b,c$ not contained in $D$, respectively.
Since $K_{3,3} \in \F_{3,1,0}^4 \cap \F_{3,1,0}^5$, $G \neq K_{3,3}$, so not all of $a',b',c'$ are the same.

Suppose that $a',b'$, and $c'$ are not pairwise distinct.  
We may assume without loss of generality that $a'=b'$.  
Since $[V(D) \cup \{a'\}, V(G)-V(D)-\{a'\}]$ is an edge-cut of order two, by Lemma \ref{lemma:3-connected}, $G-V(D)-\{a'\}$ is an isolated vertex, so $c'$ is adjacent to $a'$.  
Then $|E(G)| = 10$, and $\{a,c\}$ is a feedback vertex set of $G$, a contradiction.
Hence $a',b',c'$ are three distinct vertices in $G-V(D)$ adjacent in $G$ to vertices in $D$.
\end{proof}

\begin{lemma} \label{girth 5}
$G$ has girth at least five.
\end{lemma}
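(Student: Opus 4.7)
\textbf{Proof plan for Lemma \ref{girth 5}.} The plan is to argue by contradiction. Suppose $G$ has girth four, so $G$ contains a 4-cycle $C = v_1 v_2 v_3 v_4$; by Lemma \ref{cubic in short cycle} each $v_i$ has degree three. Let $w_i$ denote the neighbor of $v_i$ off $C$. Since $G$ has no triangle by Lemma \ref{lemma:notriangle}, $w_i \neq w_{i+1}$ for each $i$ (indices mod $4$). I would split the argument into cases depending on whether opposite $w_i$'s coincide.

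When the four $w_i$'s are pairwise distinct, I would apply Lemma \ref{lemma:op2makesbad} to the path $v_1 v_2 v_3$ to obtain $G' = (G - \{v_1,v_2,v_3\}) + v_4 w_1 + v_4 w_3$, which contains two disjoint cycles of length less than $g$. Every cycle of $G'$ using a new edge contains $v_4$, and every cycle not using a new edge is a cycle of $G - \{v_1, v_2, v_3\}$; since the two short cycles of $G'$ are disjoint, one of them avoids $v_4$ and hence lies in $G - V(C)$, producing a cycle of $G$ of length less than $g$ disjoint from $C$. For $g = 4$ this would be a triangle in $G$, contradicting Lemma \ref{lemma:notriangle}; for $g = 5$ it would be a 4-cycle of $G$ disjoint from $C$, contradicting the hypothesis that $G$ has no two disjoint cycles of length less than five.

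In the case $w_1 = w_3 =: w$ with $w_2 \neq w_4$ (the symmetric case being analogous), the induced subgraph on $\{v_1,v_3\} \cup \{v_2,v_4,w\}$ is a $K_{2,3}$; Lemma \ref{K_{2,3} neighbors} then forces $w_2$, $w_4$, and the third neighbor $u$ of $w$ to be pairwise distinct. I would apply Lemma \ref{lemma:op2makesbad} to the path $v_4 v_1 v_2$ to get $G' = (G - \{v_1,v_2,v_4\}) + v_3 w_2 + v_3 w_4$, where the required non-adjacencies follow from distinctness of $w_2,w_4$ and from $w_2, w_4 \neq w$; since the new edges are incident to $v_3$, the same disjointness argument as before (with $v_3$ in place of $v_4$) would produce a cycle of $G$ of length less than $g$ disjoint from $V(C)$, a contradiction.

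The final case is $w_1 = w_3 =: w$ and $w_2 = w_4 =: w'$. Here the bipartite subgraph on $\{v_1, v_3, w'\} \cup \{v_2, v_4, w\}$ contains every edge except possibly $ww'$. If $ww' \in E(G)$, the subgraph is $K_{3,3}$ and exhausts all degrees, so $G = K_{3,3} \in \F^g_{3,1,0}$, contradicting Lemma \ref{vertex property F}. Otherwise, let $u_1, u_2$ be the third neighbors of $w, w'$: if $u_1 \neq u_2$, then with $X = \{v_1, v_2, v_3, v_4, w, w'\}$ the pair $[X, V(G) - X]$ is an edge-cut of order two with both sides of size at least two, contradicting Lemma \ref{lemma:3-connected}; if $u_1 = u_2 =: u$, then $G$ has exactly seven vertices and is $K_{3,3}$ with the edge $ww'$ subdivided once, and taking $e = v_1 v_2$, suppressing $u$ in $G - e$ yields $K_{3,3} - e \cong L \in \F_{3,1}$, so $G - e \in \F_{4,1}$ and hence $G \in \F^g_{4,1,0}$, again contradicting Lemma \ref{vertex property F}. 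The hardest step will be this last sub-subcase, where combining edge-transitivity of $K_{3,3}$ with the recursive definition of $\F_{4,1}$ via subdivision is required to recognise the seven-vertex graph as a member of $\F^g_{4,1,0}$.
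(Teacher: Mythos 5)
Your proposal is correct and its core is the same as the paper's: put the $4$-cycle's vertices at degree three via Lemma \ref{cubic in short cycle}, apply Lemma \ref{lemma:op2makesbad} to a path of three consecutive vertices of $C$ so that both added edges are incident with the fourth vertex, and use disjointness of the two resulting short cycles to extract a short cycle of $G$ avoiding $V(C)$, contradicting either Lemma \ref{lemma:notriangle} (for $g=4$) or the hypothesis on $G$ (for $g=5$). The only divergence is your final case $w_1=w_3$, $w_2=w_4$: the paper never enters this case, because Lemma \ref{K_{2,3} neighbors} applied to the $K_{2,3}$ on $\{v_1,v_3\}\cup\{v_2,v_4,w\}$ already yields that the two ``opposite'' outside neighbors cannot both coincide (in your configuration that $K_{2,3}$ has at most two distinct outside neighbors, namely $w'$ and the third neighbor of $w$, which directly contradicts that lemma); curiously, you invoke that lemma in your second case, where it is not needed, but not in the third, where it finishes instantly. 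Your substitute analysis of the third case -- $G=K_{3,3}\in\F^g_{3,1,0}$ when $ww'\in E(G)$, an order-two edge-cut violating Lemma \ref{lemma:3-connected} when the third neighbors of $w,w'$ differ, and the recognition of $K_{3,3}$ with $ww'$ subdivided as a member of $\F^g_{4,1,0}$ otherwise -- is valid (modulo the easy remark, worth stating, that $u$ cannot have a neighbor outside $X\cup\{u\}$ since that edge would be a cut-edge), but it is a detour the paper's one-line use of Lemma \ref{K_{2,3} neighbors} renders unnecessary.
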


\begin{proof}
Suppose that $C$ is a cycle of length less than five.
By Lemma \ref{lemma:notriangle}, $C$ has length four.  
Say $C=abcda$.
By Lemma \ref{cubic in short cycle}, $a,b,c,$ and $d$ have neighbors $a',b',c',$ and $d'$ not in $C$, respectively.
By Lemma \ref{K_{2,3} neighbors}, $a' \neq c'$ or $b' \neq d'$.
By symmetry, we may assume without loss of generality that $a' \neq c'$.

Let $G' = (G-\{a,b,c\})+\{a'd, c'd\}$.
By Lemma \ref{lemma:op2makesbad}, $G'$ contains two disjoint cycles $D_1$ and $D_2$ of length less than $g$.
Since $G$ does not contain two such cycles, one of them, say $D_1$, contains an edge in $\{a'd,c'd\}$.
In particular, $d \in V(D_1)$, so $D_2$ contains none of $a'd,c',$ and $d$.
Since $D_2$ is in $G'$ and is disjoint from $D_1$, $V(D_2)\cap V(C) = \emptyset$.
Therefore $C$ and $D_2$ are disjoint cycles of length less than five in $G$.
Hence $g=4$.  Then $D_2$ is a triangle in $G$, contradicting Lemma \ref{lemma:notriangle}.
This proves the lemma.
\end{proof}

\begin{lemma} \label{g=5}
$G$ is cubic and 3-connected, and $g=5$.
\end{lemma}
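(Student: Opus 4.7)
The plan is to prove the three conclusions in sequence. For cubicness, I would argue by contradiction: a vertex $v$ of degree two, together with Lemma \ref{lemma:contracting}, produces a cycle of length $g$ through $v$ and a disjoint cycle of length less than $g$; but Lemma \ref{girth 5} gives girth at least five while $g\le 5$, so no cycle of length less than $g$ can exist.

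For 3-connectivity, I would first upgrade internal 3-edge-connectivity (Lemma \ref{lemma:3-connected}) to 3-edge-connectivity using cubicness: any 2-edge-cut would, by internal 3-edge-connectivity, isolate a single vertex, which would then have degree two. It then remains to invoke the standard fact that a 3-edge-connected cubic graph is 3-vertex-connected, via the routine argument analyzing the distribution of edges at a hypothetical separating pair $\{u,v\}$, which forces $uv\notin E(G)$ and then produces a cut of order two between $A\cup\{u\}$ (or $A\cup\{v\}$) and its complement.

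For $g=5$, I would rule out $g=4$. If $g=4$ then $n$ is even, $m=3n/2$, and $\phi(G)>\frac{2m}{9}=\frac{n}{3}$ by $g$-minimality. Since $G$ is a connected 2-connected cubic graph of girth at least five (Lemmas \ref{2-conn} and \ref{girth 5}), it is none of $K_4,Q_3,V_8$ and is not obtainable by adding cut-edges to copies of $K_3$ or $K_4^+$, so Theorem \ref{lz} applied with girth five yields $\phi(G)\le\frac{5n}{16}+\frac{1}{4}$. Since the smallest cubic graph of girth at least five is the Petersen graph, $n\ge 10$; a short computation then shows that the interval $\left(\frac{n}{3},\frac{5n}{16}+\frac{1}{4}\right]$ contains no integer at $n=10$ and is empty for $n\ge 12$, so no admissible value of $\phi(G)$ exists.

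The only real subtlety lies in the final step: Theorem \ref{lz}'s bound only beats $n/3$ outright once $n\ge 12$, so the exceptional value $n=10$ must be handled separately using integrality of $\phi(G)$, which closes the small gap between $10/3$ and the rounded upper bound $27/8$. Everything else is essentially forced by the preceding structural lemmas.
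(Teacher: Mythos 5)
Your proof is correct, but the way you eliminate $g=4$ is genuinely different from the paper's. The first two parts essentially coincide with or slightly detour from the paper: cubicness is exactly the paper's argument (Lemmas \ref{lemma:contracting} and \ref{girth 5}), while for 3-connectivity the paper simply notes that, once $G$ is cubic, Lemma \ref{lemma:menger} gives three internally disjoint paths between every pair of vertices; your route through 3-edge-connectivity (via Lemma \ref{lemma:3-connected}) plus the standard ``cubic and 3-edge-connected implies 3-connected'' fact is valid but longer than necessary. The real divergence is the last step. The paper stays internal: it applies Lemma \ref{lemma:op2makesbad} to a path $abc$, obtains two disjoint triangles in $G'$ each containing one of the added edges $a_1a_2$, $c_1c_2$, and thus a 4-cycle in $G$, contradicting Lemma \ref{girth 5}. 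You instead import the Liu--Zhao bound (Theorem \ref{lz}) with girth parameter five, which is legitimate since it is an external published result and creates no circularity; your arithmetic checks out: $\phi(G)>\tfrac{2}{9}m=\tfrac{n}{3}$ from $4$-minimality and $r_4(G)=0$, $\phi(G)\leq\tfrac{5n}{16}+\tfrac14$ from Theorem \ref{lz}, the interval is empty once $n\geq 12$, and at the only remaining even value $n=10$ (Moore bound for girth five) the interval $(\tfrac{10}{3},\tfrac{27}{8}]$ contains no integer. What the paper's approach buys is self-containment and uniformity: it uses only machinery already developed (and the same operation-$\circ$ lemma is reused for Lemma \ref{lemma:vertexadjacentto5-cycles}), whereas your approach leans on an external theorem of comparable depth and survives only by a thin numerical margin of $\tfrac14-\tfrac{n}{48}$ closed by integrality at $n=10$; as you implicitly note, the same counting cannot touch the $g=5$ case, which is why the dodecahedron characterization is still needed.
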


\begin{proof}
Lemmas \ref{lemma:contracting} and \ref{girth 5} imply that $G$ is cubic.
By Lemma \ref{lemma:menger}, $G$ is 3-connected.

Suppose that $g=4$.
Let $abc$ be a path in $G$ on three vertices.
Let $a_1,a_2$ be the neighbors of $a$ other than $b$, and let $c_1,c_2$ be the neighbors of $c$ other than $b$.
Since $G$ has girth at least five, $\{a_1,a_2\} \cap \{c_1,c_2\} = \emptyset$.
Let $G' = (G-\{a,b,c\})+a_1a_2+c_1c_2$.
By Lemma \ref{lemma:op2makesbad}, $G'$ contains two disjoint triangles.
Since $\{a_1,a_2\} \cap \{c_1,c_2\} = \emptyset$, each triangle contains one of $a_1a_2$ and $c_1c_2$.
Hence $G$ contains a cycle of length four, a contradiction.
Therefore, $g=5$.
\end{proof}

\begin{lemma} \label{lemma:vertexadjacentto5-cycles}
Let $v\in V(G)$, and let $a,b,$ and $c$ be the neighbors of $v$.  
Let $a_1$ and $a_2$ be the neighbors of $a$ other than $v$, and let $b_1$ and $b_2$ be the neighbors of $b$ other than $v$.  
Then $G-v$ contains two disjoint 5-cycles, where one contains the path $a_1aa_2$ and the other contains the path $b_1bb_2$.
\end{lemma}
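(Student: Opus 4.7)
The plan is to apply Lemma \ref{lemma:op2makesbad} to the path $avb$. First I would verify the hypotheses: by Lemma \ref{g=5}, $G$ is cubic, so $a,v,b$ all have degree three, and they are distinct (since $a,b$ are distinct neighbors of $v$ in a simple graph). Moreover, the girth-five condition (Lemma \ref{girth 5}) forces $a_1 \neq a_2$, $b_1 \neq b_2$, $\{a_1,a_2\}\cap\{b_1,b_2\}=\emptyset$, and neither $a_1a_2$ nor $b_1b_2$ is an edge of $G$, so the operation in Lemma \ref{lemma:op2makesbad} is well-defined on $avb$. The lemma therefore tells me that the graph
\[
G^* := (G-\{a,v,b\}) + a_1a_2 + b_1b_2
\]
contains two disjoint cycles of length less than $5$.

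Next I would use the girth bound on $G$ to locate the short cycles. Since $G$ has girth at least $5$ (Lemma \ref{girth 5}), so does $G-\{a,v,b\}$; hence every cycle of length less than $5$ in $G^*$ must use at least one of the two new edges $a_1a_2$ or $b_1b_2$. Because the two short cycles guaranteed by Lemma \ref{lemma:op2makesbad} are disjoint and there are only two new edges, exactly one cycle $C_1$ uses $a_1a_2$ and the other, $C_2$, uses $b_1b_2$, and each has length at most four.

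Finally I would lift $C_1$ and $C_2$ back into $G-v$. Replacing the edge $a_1a_2$ in $C_1$ by the path $a_1aa_2$ yields a cycle $D_1$ in $G-v$ containing $a_1aa_2$ of length at most $5$; analogously, replacing $b_1b_2$ in $C_2$ by $b_1bb_2$ gives a cycle $D_2$ in $G-v$ containing $b_1bb_2$ of length at most $5$. Since $G$ has girth exactly $5$, both $D_1$ and $D_2$ are $5$-cycles. For disjointness, note that $V(D_1) = V(C_1)\cup\{a\}$ and $V(D_2)=V(C_2)\cup\{b\}$; since $V(C_1)\cap V(C_2)=\emptyset$ and $V(C_1),V(C_2)\subseteq V(G)-\{a,v,b\}$, we get $V(D_1)\cap V(D_2)=\emptyset$, as desired.

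The argument is short because almost all the work has already been done in the previous lemmas; the only mild subtlety is confirming that each of the two short cycles supplied by Lemma \ref{lemma:op2makesbad} uses a \emph{different} one of the two added edges, which is where the girth hypothesis and disjointness combine cleanly.
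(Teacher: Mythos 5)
Your proposal is correct and follows essentially the same route as the paper: apply Lemma \ref{lemma:op2makesbad} to the path $avb$, use the girth-five condition to conclude that each of the two disjoint short cycles in $(G-\{a,v,b\})+a_1a_2+b_1b_2$ contains exactly one of the added edges, and then lift them back to disjoint $5$-cycles through $a_1aa_2$ and $b_1bb_2$ in $G-v$. Your hypothesis-checking (degrees, distinctness, non-adjacency of $a_1a_2$ and $b_1b_2$) is slightly more explicit than the paper's, but the argument is the same.
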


\begin{proof}
Let $G' = (G-\{a,v,b\})+a_1a_2+b_1b_2$.
By Lemma \ref{lemma:op2makesbad}, $G'$ contains two disjoint cycles $D_1,D_2$ of length less than five.
Since $G$ has girth at least five, each $D_1,D_2$ contains exactly one of $a_1a_2$ and $b_1b_2$.
Say $D_1$ contains $a_1a_2$ and $D_2$ contains $b_1b_2$.
So $G[V(D_1) \cup \{a\}]$ and $G[V(D_2) \cup \{b\}]$ are two disjoint cycles in $G-v$ of length less than six.
These two cycles have length five since $G$ has girth five.
\end{proof} % that every vertex is adjacent to two 5-cycles

By Theorem \ref{characterization dodecahedron}, $G$ is the dodecahedron.
However, the dodecahedron has 30 edges and a feedback vertex set of size 6, a contradiction.  
This completes the proof of Theorem \ref{real main thm}.

\bigskip

\noindent{\bf Acknowledgement:} The authors thank Michael Genter for bringing \cite{gr} to our attention and thank the anonymous reviewer for careful reading of this paper.

%\bibliography{bibs/subcubic}{}
%\bibliographystyle{plain}

\end{document}